\newcommand{\nc}{\newcommand}
\nc{\dmo}{\DeclareMathOperator}
\dmo{\ra}{\rightarrow}
\dmo{\N}{\mathbb{N}}
\dmo{\Z}{\mathbb{Z}}
\dmo{\Q}{\mathbb{Q}}
\dmo{\R}{\mathbb{R}}
\dmo{\C}{\mathcal{C}}
\dmo{\AC}{\mathcal{AC}}
\dmo{\Mod}{Mod}
\dmo{\PMod}{PMod}
\dmo{\B}{B}
\dmo{\PB}{PB}
\dmo{\I}{\mathcal{I}}
\dmo{\el}{\ell_{\C}}
\dmo{\NN}{\mathcal{N}} 
\dmo{\rk}{rk}
\newcommand{\vol}{{\rm vol}}
\tikzset{->-/.style={decoration={
  markings,
  mark=at position #1 with {\arrow[scale=3]{>}}},postaction={decorate}}}
\nc{\nt}{\newtheorem}
\newtheorem{thm}{{\bf Theorem}}[section]
\newtheorem{lem}[thm]{{\bf Lemma}}
\newtheorem{cor}[thm]{{\bf Corollary}}
\newtheorem{prop}[thm]{{\bf Proposition}}
\newtheorem{remark}[thm]{Remark}
\newtheorem{ex}[thm]{Example}
\newtheorem{ques}[thm]{Question}
\newtheorem{conj}[thm]{Conjecture}
\numberwithin{equation}{section}
\begin{document} 

\dedicatory{Dedicated to the Memory of Toshie Takata} 

\title[Braids and fibered $2$-fold branched covers of $3$-manifolds] 
{Braids, entropies and fibered $2$-fold branched covers of  $3$-manifolds}

\author[S. Hirose]{%
Susumu Hirose
}
\address{%
Department of Mathematics,  
Faculty of Science and Technology, 
Tokyo University of Science, 
Noda, Chiba, 278-8510, Japan}
\email{%
hirose\_susumu@ma.noda.tus.ac.jp
}

\author[E. Kin]{%
    Eiko Kin
}
\address{%
      Center for Education in Liberal Arts and Sciences, Osaka University, Toyonaka, Osaka 560-0043, Japan
}
\email{%
        kin.eiko.celas@osaka-u.ac.jp
}

\subjclass[2020]{%
	Primary 57M20, 57E32, Secondary 37B40
}

\keywords{%
        mapping class groups, braid groups, 
	pseudo-Anosov, dilatation, entropy, $2$-fold  branched cover, 
	fibered $3$-manifold, Heegaard splitting}

\date{	
October 1, 2022}

\thanks{Hirose's research was partially supported by JSPS KAKENHI Grant Numbers JP16K05156, JP20K03618. 
Kin's research was partially supported by JSPS KAKENHI  Grant Numbers JP21K03247, JP22H01125.} 
	
\begin{abstract} 
It is proved by Sakuma and Brooks that 
any closed orientable $3$-manifold with a Heegaard splitting of genus $g$   
admits a $2$-fold branched cover  that is a hyperbolic $3$-manifold and 
a genus $g$ surface bundle over the circle. 
This paper concerns entropy of pseudo-Anosov monodromies for hyperbolic fibered $3$-manifolds. 
We prove that there exist infinitely many closed orientable $3$-manifolds $M$ such that 
the minimal entropy over all hyperbolic, genus $g$ surface bundles over the circle 
as $2$-fold branched covers of the $3$-manifold $M$ is comparable to $1/g$. 
\end{abstract}
\maketitle

\section{Introduction}
\label{section_Introduction}

Let $M$ be a closed orientable $3$-manifold 
which admits a genus $g$ Heegaard splitting. 
Sakuma  \cite{Sakuma81} proved that 
there exists a $2$-fold  branched cover $\widetilde{M}$ of $M$  
such that $\widetilde{M}$ is a genus $g$  surface bundle over the circle $S^1$. 
It is proved by Brooks \cite{Brooks85} that  
 the  branched cover $\widetilde{M}$ of $M$ can  be chosen to be hyperbolic if $g \ge 2$.

To state our results of this paper, 
let $\Sigma = \Sigma_{g,p}$ be an orientable, 
connected surface of genus $g$ with $p$ punctures, possibly $p= 0$. 
We set $\Sigma_g= \Sigma_ {g,0}$ 
for a closed orientable surface of genus $g$. 
The mapping class group $\mathrm{MCG}(\Sigma)$  is 
the group of isotopy classes of orientation-preserving 
self-homeomorphisms on $\Sigma$ which preserve the punctures setwise. 
%

By the Nielsen-Thurston classification \cite{Thurston88,FarbMargalit12}, 
an element  in $\mathrm{MCG}(\Sigma)$ is one of the following types: 
periodic, reducible, pseudo-Anosov. 
If an element in $\mathrm{MCG}(\Sigma)$ is neither periodic nor reducible, then 
it is pseudo-Anosov. 
For a mapping class $\phi= [f] \in \mathrm{MCG}(\Sigma)$, 
the {\it mapping torus}  $ T_{\phi}$ of $\phi$ is defined by 
$$
T_{\phi} = \Sigma \times \mathbb{R} / \sim, 
$$
where $(x,t) \sim (f(x), t+1)$ for $x \in \Sigma$ and $t \in \mathbb{R}$. 
We call $\Sigma$ the {\it fiber} of $T_{\phi}$.  
The $3$-manifold $T_{\phi}$ is a $\Sigma$-bundle over $S^1$ with the monodromy $\phi$.  
By Thurston \cite{Thurston98,Otal01}, 
$T_{\phi}$ admits a hyperbolic structure of finite volume 
if and only if $\phi$ is pseudo-Anosov. 

Thanks to the above result by Sakuma, 
one can define the non-empty subset 
$\mathcal{D}_g(M) \subset \mathrm{MCG}(\Sigma_g)$ consisting of elements 
$\phi \in  \mathrm{MCG}(\Sigma_g)$ such that 
$T_{\phi} \rightarrow M$ is a $2$-fold branched cover branched over a link, i.e., 
$$\mathcal{D}_g(M) = \{ \phi \in \mathrm{MCG}(\Sigma_g)\ |\ T_{\phi} \rightarrow M \ \mbox{is a $2$-fold branched cover}\}.$$
By the above result of Brooks, 
there exists a pseudo-Anosov element in $\mathcal{D}_g(M) $ if $g \ge \max(2, g(M))$, 
where $g(M)$ is the Heegaard genus of $M$.

To each pseudo-Anosov mapping class $\phi \in \mathrm{MCG}(\Sigma)$ on the surface $\Sigma= \Sigma_{g,p}$,  
there exists an associated dilatation (stretch factor) $\lambda(\phi)>1$ (\cite{FarbMargalit12}). 
The logarithm $\log(\lambda(\phi))$ of the dilatation  is called  the {\it entropy} of $\phi$. 
We call 
\begin{equation}
\label{equation_normalized-entropy}
\mathrm{Ent}(\phi)= |\chi(\Sigma)| \log(\lambda(\phi))
\end{equation}
the {\it normalized entropy} of $\phi$, 
where $\chi(\Sigma)$ is the Euler characteristic of $\Sigma$.

Consider the set 
$$\mathrm{Spec}(\Sigma) = \{ \log (\lambda(\phi))\ |\ \phi \in \mathrm{MCG}(\Sigma)\ \mbox{is\ pseudo-Anosov}\}.$$
For any subset of $\mathrm{Spec}(\Sigma)$, there exists a minimum. 
Then for any subset $G \subset \mathrm{MCG(\Sigma)}$ containing a pseudo-Anosov element, 
we set 
$$\ell(G)= \min\{\log (\lambda(\phi))\ | \   \phi \in G\ \mbox{is pseudo-Anosov}\},$$ 
that is the minimal entropy of pseudo-Anosov elements of $G$. 
Clearly we have $\ell(G) \ge \ell(\mathrm{MCG(\Sigma)})$. 
Penner \cite{Penner91} proved that 
$\ell(\mathrm{MCG}(\Sigma_g)) $ is comparable to $1/g$. 
Here we say that for two functions $A$ and $B$ with respect to $g$,  
$A$ is comparable to $B$ and write $A \asymp B$ 
if there exists a constant $C>0$ independent of $g$ so that 
$B/C \le A \le CB$. 

Asymptotic behaviors of minimal  entropies of various subgroups (subsets) of mapping class groups have been studied by many authors 
(\cite{FarbLeiningerMargalit08,Tsai09,Valdivia12,AgolLeiningerMargalit16,HiroseKin17,Yazdi18,HiroseIguchiKinKoda22}). 
For the hyperelliptic mapping class group $\mathcal{H}(\Sigma_g)$ defined on $\Sigma_g$,  
the minimal entropy $\ell(\mathcal{H}(\Sigma_g))$ for $\mathcal{H}(\Sigma_g)$  
is also comparable to $1/g$ (Hironaka-Kin \cite{HironakaKin06}). 
In contrast, the minimal entropy $\ell (\mathcal{I}(\Sigma_g))$ for the 
Torelli group $\mathcal{I}(\Sigma_g)$ defined on $\Sigma_g$ 
has a uniform lower bound (Farb-Leininger-Margalit \cite{FarbLeiningerMargalit08}).

Given a $3$-manifold $M$, 
we consider the subset $\mathcal{D}_g(M) \subset \mathrm{MCG}(\Sigma_g)$ and we write 
$$\ell_g(M)= \ell(\mathcal{D}_g(M)).$$  
Then  $\ell_g(M) \ge \ell(\mathrm{MCG}(\Sigma_g))$. 
The authors proved in \cite{HiroseKin202} that 
for the $3$-sphere $S^3$, it holds 
$\ell_g(S^3) \asymp \frac{1}{g}$. 
In this paper, we prove that there exist infinitely many closed $3$-manifolds with the same property as $S^3$. 
More precisely, we prove the following result. 

\begin{thm}
\label{thm_infinite}
There exist infinitely many closed orientable non-hyperbolic $3$-manifolds $M$ such that 
$\ell_g(M) \asymp \dfrac{1}{g}$. 
\end{thm}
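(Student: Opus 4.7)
The lower bound is automatic: since $\mathcal{D}_g(M)\subset\mathrm{MCG}(\Sigma_g)$, Penner's theorem yields $\ell_g(M)\ge \ell(\mathrm{MCG}(\Sigma_g))\asymp 1/g$ for every closed orientable $M$ admitting the relevant branched covers. The content of the theorem is therefore to exhibit an infinite family of pairwise distinct, closed, orientable, non-hyperbolic $3$-manifolds $\{M_k\}_{k\in\N}$, together with, for each $k$ and every sufficiently large $g$, a pseudo-Anosov $\phi_g^{(k)}\in\mathcal{D}_g(M_k)$ realizing a matching upper bound $\log\lambda(\phi_g^{(k)})\le C_k/g$.

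My strategy is to bootstrap from the authors' earlier theorem $\ell_g(S^3)\asymp 1/g$ of \cite{HiroseKin202}. That result supplies, for every large $g$, a pseudo-Anosov hyperelliptic mapping class $\phi_g\in\mathcal{D}_g(S^3)$ with $\log\lambda(\phi_g)=O(1/g)$, arising from a pseudo-Anosov braid $\beta_g\in B_{2g+2}$ whose closure is the branch link of the cover $T_{\phi_g}\to S^3$. For each $k\in\N$, I would modify this construction by a $g$-independent operation: compose $\phi_g$ with a fixed hyperelliptic mapping class $\psi_k$ (for instance, a power of a Dehn twist along a hyperelliptic curve $c_k$ supported on a subsurface disjoint from, or meeting controllably, the carrier of the invariant train track of $\phi_g$) whose projection to $B_{2g+2}$ corresponds to a stabilization followed by a Dehn surgery on a split unknotted component of the branch set. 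Downstairs this replaces $S^3$ by a non-hyperbolic $M_k$ (a lens space, a Seifert-fibered manifold, or a graph manifold, depending on the choice of surgery slope parametrized by $k$), while upstairs it yields $\phi_g^{(k)}=\phi_g\psi_k\in\mathcal{D}_g(M_k)$. Distinctness and non-hyperbolicity of the family $\{M_k\}$ can then be verified by computing classical invariants such as first homology, Heegaard genus, or Seifert data of the surgered manifolds and arranging them to vary without bound in $k$.

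The entropy estimate should follow from the fact that $\psi_k$ is supported on a subsurface of complexity independent of $g$ and applied a bounded (in $g$) number of times: a train-track argument extending the invariant foliation of $\phi_g$ across the support of $\psi_k$ preserves pseudo-Anosovity and inflates the dilatation by at most a multiplicative factor depending on $k$, which after taking logarithms yields $\log\lambda(\phi_g^{(k)})\le C_k/g$. Alternatively, Thurston's construction applied to a filling pair of multicurves for $\phi_g\psi_k$ gives an explicit pseudo-Anosov representative whose dilatation one can bound from above directly.

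The main obstacle is the joint control of three things under this modification: (i) preservation of pseudo-Anosovity, since composition of a pseudo-Anosov with a Dehn twist can easily produce reducible maps; (ii) the quantitative entropy bound $O_k(1/g)$ uniform in $g$; and (iii) identifying the downstairs manifold $M_k$ as genuinely distinct from $S^3$ and from the $M_j$ with $j<k$, and confirming non-hyperbolicity. A promising technical route for (i)–(ii) is Penner's train-track semigroup criterion, adapted so that the hyperelliptic involution is respected and so that the invariant train track of $\phi_g$ can be enlarged to carry the additional twist supported on $c_k$. Point (iii) is a topological bookkeeping task, but carrying it out uniformly across the full infinite family $\{M_k\}$ is the practical crux of the argument.
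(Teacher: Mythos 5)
Your lower bound is exactly the paper's (Penner plus $\mathcal{D}_g(M)\subset\mathrm{MCG}(\Sigma_g)$), but the proposed upper bound has a genuine quantitative gap. If you compose $\phi_g$ with a fixed mapping class $\psi_k$ and control the result by showing that the dilatation is inflated by at most a multiplicative factor $c_k>1$ independent of $g$, you obtain $\log\lambda(\phi_g\psi_k)\le \log c_k+O(1/g)$, which is bounded \emph{below} by the positive constant $\log c_k$ for large $g$ and therefore does not give $\log\lambda(\phi_g^{(k)})\le C_k/g$. The estimate you need is that the \emph{normalized} entropy $|\chi(\Sigma_g)|\log\lambda$ stays bounded as $g\to\infty$, and a fixed-genus perturbation by a Dehn twist power gives no mechanism for that. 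The paper gets the decay by an entirely different device: all the monodromies of a given congruence class of genera are realized as fibers of a \emph{single} hyperbolic $3$-manifold $T_{\widetilde{\beta}}$, corresponding to primitive integral classes $(k+1,1)$ in a two-dimensional subcone of one fibered cone; since these rays converge to the ray of $(1,0)$, Fried's theorem (Theorem \ref{thm_Fried_1}(2)) forces $\mathrm{Ent}((k+1,1))\to\mathrm{Ent}((1,0))$, hence $\mathrm{ent}\asymp 1/\|(k+1,1)\|\asymp 1/g$. Your train-track argument would have to reproduce this fibered-cone structure to succeed, at which point it is no longer a perturbation argument.

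A second unaddressed point is the descent to the base manifold. Membership in $\mathcal{D}_g(M_k)$ is not just a matter of $\phi_g^{(k)}$ being hyperelliptic: the paper's Theorem \ref{thm_2-fold} requires the monodromy to be of the form $\mathfrak{t}(\widetilde{b})$ for a braid $b\in B_{2g+2}$ with $C(b)=L_k$ and $M_{L_k}=M_k$, and keeping the circular plat closure constant while the genus grows is exactly what Lemmas \ref{lem_disktwist-skew} and \ref{lem_disktwist-skew2} (the disk-twist constructions) are for. Your ``surgery on a split unknotted component of the branch set'' would change $L_k$, and hence potentially $M_k$, as $g$ varies, unless you impose the equivariance and plat-closure constraints explicitly. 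For the record, the paper's actual infinite family is much more concrete than your sketch suggests: it proves $\ell_g(M_{\mathrm{cl}(b)})\asymp 1/g$ for homogeneous pure braids $b=\sigma_{j_1}^{2m_1}\cdots\sigma_{j_k}^{2m_k}$ in which every generator occurs (Theorem \ref{thm_application}, with pseudo-Anosovity certified by the linking-number criterion of Lemma \ref{lem_torus-link}), and then takes $b=\sigma_1^{2m}\in B_2$, whose closure is the $(2m,2)$-torus link and whose double branched cover is the lens space $L_{(2m,1)}$ --- an infinite family of pairwise distinct non-hyperbolic manifolds with no distinctness bookkeeping required.
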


For a link $L$ in $S^3$, 
let  $M_L \rightarrow S^3$ be  the $2$-fold branched cover of $S^3$ branched over a link $L$. 
Every link $L$ can be expressed  by the closure $\mathrm{cl}(b)$ of some braid $b$. 
Along the way in the proof of Theorem \ref{thm_infinite}, we prove in Theorem \ref{thm_application} that 
if $b$ is a homogeneous braid with certain conditions, then we have 
$\ell_g(M_{\mathrm{cl}(b)})  \asymp \dfrac{1}{g}$. 
The $3$-manifolds $M_{\mathrm{cl}(b)}$ with this property 
 include the following examples. 

\begin{itemize}
\item 
The lens space $L_{(2m,1)}$ of type $(2m,1)$ with $m \ne 0$ 
(Corollary \ref{corollary_lens}). 

\item 
The  connected sum $\sharp_{n}S^2 \times S^1$ of $n$ copies of $S^2 \times S^1$ for $n \ge 1$ 
(Theorem \ref{thm_trivial-link}). 

\item 
Dehn fillings of {\em minimally twisted $2k$-chain link\/} 
$\mathcal{C}_{2k}$, 
which is a $2k$ components chain link with every other link component lying flat 
in the plane of projection, and alternate link components to be perpendicular 
to the plane of projection. 
(See (3) of Figure \ref{fig_6-chain}  for $\mathcal{C}_6$.) 
\end{itemize}

Since $\mathcal{C}_{2k}$  is a hyperbolic link, all Dehn fillings (with a finite exceptions) are hyperbolic. 
Moreover $\mathrm{vol}(S^3 \setminus \mathcal{C}_{2k}) \ge 2k\, v_3$, 
where $v_3= 1.01494 \dots $ 
is the volume of the regular hyperbolic tetrahedron. 
Hence we have the following result. 

\begin{thm}
\label{thm_infinite-hyperbolic}
For any $R \geq 0$, there exists a closed orientable hyperbolic $3$-manifold $M$ 
with volume more than $R$ such that 
$\ell_g(M) \asymp \dfrac{1}{g}$. 
\end{thm}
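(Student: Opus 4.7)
The plan is to realize $M$ as a suitable Dehn filling of the complement of the minimally twisted $2k$-chain link $\mathcal{C}_{2k}$ in $S^3$ for sufficiently large $k$, and then invoke the bullet point asserting that $\ell_g(M) \asymp 1/g$ for Dehn fillings of $\mathcal{C}_{2k}$.

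First, given $R \geq 0$, I choose an integer $k$ large enough that $2k\, v_3 > R$. Since $\mathcal{C}_{2k}$ is hyperbolic and $\mathrm{vol}(S^3 \setminus \mathcal{C}_{2k}) \geq 2k\, v_3$, the cusped manifold $S^3 \setminus \mathcal{C}_{2k}$ has volume strictly greater than $R$.

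Next, I appeal to Thurston's hyperbolic Dehn surgery theorem: all but finitely many Dehn filling slopes on the $2k$ cusps produce a closed hyperbolic $3$-manifold. Moreover, by Thurston's volume convergence, the volumes of these hyperbolic Dehn fillings approach $\mathrm{vol}(S^3 \setminus \mathcal{C}_{2k})$ from below as the slopes tend to infinity. In particular, for all but finitely many slope tuples the resulting closed hyperbolic $3$-manifold $M$ satisfies $\mathrm{vol}(M) > R$. Among such $M$, at least one is a Dehn filling in the class covered by the third bullet point preceding Theorem \ref{thm_infinite-hyperbolic}, so $\ell_g(M) \asymp 1/g$ holds for this $M$.

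The main obstacle, which is already finessed by the structure of the preceding list, is to ensure that large-volume Dehn fillings of $\mathcal{C}_{2k}$ remain within the class of $3$-manifolds for which the $\asymp 1/g$ conclusion applies; this is automatic if the bullet point is stated for every Dehn filling of $\mathcal{C}_{2k}$, and otherwise can be handled by taking the surgery slopes to infinity within an admissible family. Since both conditions (hyperbolicity with large volume, and being covered by the Dehn filling result) exclude only finitely many slope tuples in each cusp, infinitely many choices remain, yielding the desired $M$.
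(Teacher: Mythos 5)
Your overall strategy --- realize $M$ as a Dehn filling of the minimally twisted $2k$-chain link complement with $k$ chosen so that $2k v_3 > R$, and make the filling slopes long so that the filled manifold is hyperbolic with volume close to $\mathrm{vol}(S^3\setminus\mathcal{C}_{2k})$ --- is the same as the paper's. But there is a genuine gap: the step ``at least one such $M$ is in the class covered by the third bullet point'' is precisely the content that has to be proved, and your proposal supplies no argument for it. The bullet list in the introduction is a forward summary of the proof of Theorem \ref{thm_infinite-hyperbolic}, not an independent result, so citing it is circular. What is missing is the explicit family of braids: the paper takes $b_{\bm m}=\sigma_1^{2m_1}\sigma_2^{2m_2}\cdots\sigma_1^{2m_{2k-1}}\sigma_2^{2m_{2k}}\in B_3$ with all $m_i>0$, checks that $b_{\bm m}$ is a homogeneous pure braid in which every generator appears with even exponents so that Theorem \ref{thm_application} yields $\ell_g(M_{\mathrm{cl}(b_{\bm m})})\asymp 1/g$, and then identifies $M_{\mathrm{cl}(b_{\bm m})}$ as a Dehn filling of $S^3\setminus\mathcal{C}_{2k}$ (via the $2$-fold branched cover of the complement of the $2k$ twist regions). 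Without producing such braids and this identification, the conclusion $\ell_g(M)\asymp 1/g$ for your filling $M$ has no support.

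A second, related flaw is the assertion that ``being covered by the Dehn filling result'' excludes only finitely many slope tuples on each cusp. The fillings realized by the branched-cover construction form a thin family --- on each component of $\mathcal{C}_{2k}$ only the slopes corresponding to the integers $m_i$ occur --- so its intersection with the cofinite set of hyperbolic, large-volume fillings is not guaranteed by a counting argument. The correct fix, which the paper carries out, is to observe that within this admissible family the slope lengths tend to infinity as the $m_i\to\infty$; then the $2\pi$-theorem gives hyperbolicity of $M_{\mathrm{cl}(b_{\bm m})}$, and the volume estimate of Futer--Kalfagianni--Purcell (or Thurston's volume convergence, as you suggest) gives $\mathrm{vol}(M_{\mathrm{cl}(b_{\bm m})})>R$ once $2k v_3 > R$ and the coordinates of ${\bm m}$ are large. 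You gesture at this fix (``taking the surgery slopes to infinity within an admissible family'') but do not verify that the admissible family actually contains arbitrarily long slopes on every cusp.
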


Theorems \ref{thm_infinite} and \ref{thm_infinite-hyperbolic} imply that 
there exist infinitely many links $L$ in $S^3$ such that 
the minimal entropy $\ell_g(M_L) $  is comparable to $1/g$.  
Our conjecture is that every link in $S^3$ holds  this property:

\begin{conj}
\label{conj_sakuma-fiber}
For any link $L$ in $S^3$, we have 
$\ell_g(M_L) \asymp \dfrac{1}{g}$. 
\end{conj}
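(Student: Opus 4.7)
The lower bound $\ell_g(M_L) \geq C/g$ is immediate: from $\mathcal{D}_g(M_L) \subset \mathrm{MCG}(\Sigma_g)$ together with Penner's estimate $\ell(\mathrm{MCG}(\Sigma_g)) \asymp 1/g$, one gets $\ell_g(M_L) \geq \ell(\mathrm{MCG}(\Sigma_g)) \asymp 1/g$ for free. All the content of the conjecture lies in producing the matching upper bound: for each sufficiently large $g$, a pseudo-Anosov $\phi_g \in \mathcal{D}_g(M_L)$ with $\log \lambda(\phi_g) \leq C'/g$, where $C'$ is allowed to depend on $L$ but not on $g$.

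The natural approach extends the braid-theoretic method of Theorem \ref{thm_application}. Express $L$ as the closure of a braid $\beta$ on $n$ strands. By the Birman-Hilden correspondence, any pseudo-Anosov braid on $2g+1$ or $2g+2$ strands whose closure is isotopic to $L$ lifts, through the hyperelliptic involution on $\Sigma_g$, to an element of $\mathcal{D}_g(M_L)$ with the same dilatation. The problem therefore reduces to constructing, for each large $g$, a pseudo-Anosov braid $\beta_g$ on $\asymp g$ strands with $\mathrm{cl}(\beta_g) \simeq L$ and $\log \lambda(\beta_g) = O(1/g)$. The plan is to apply Markov stabilizations to $\beta$ until the strand count is $\asymp g$, then splice in on the newly created strands a small-dilatation pseudo-Anosov filler of Penner or Hironaka-Kin type, arranged so that (i) the underlying link class is preserved modulo branched-cover-preserving modifications such as split-union with unknots (which by the techniques behind Theorem \ref{thm_trivial-link} change $M_L$ only by connect-summing $S^2 \times S^1$ factors, which one can arrange to cancel), and (ii) the composite is genuinely pseudo-Anosov with dilatation dominated by the filler rather than by the original $\beta$.

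The main obstacle has three interlocking faces. First, Theorem \ref{thm_application} relies on homogeneity of the braid, and by Stallings' theorem closures of homogeneous braids are fibered, whereas a general link $L$ need not be fibered; a genuine replacement valid for arbitrary braid words is required. Second, a naive composite $\beta' \cdot \gamma_g$ is typically reducible: the subsurface supporting the filler $\gamma_g$ becomes invariant, and pseudo-Anosov-ness is lost unless $\beta'$ and $\gamma_g$ are interlaced on overlapping strands. Third, once they are interlaced, the invariant train-track transition matrix picks up blocks coming from $\beta'$ whose Perron-Frobenius contribution does not shrink with $g$, destroying the $O(1/g)$ entropy bound. Controlling all three simultaneously, while tracking the effect on the 2-fold branched cover to ensure the underlying manifold remains precisely $M_L$ and not some surgery derivative, uniformly across all links $L$, is the crux of Conjecture \ref{conj_sakuma-fiber}; a successful attack likely requires either a universal-link reduction (realizing every $M_L$ as a Dehn filling of a fixed hyperbolic chain-link complement, then propagating the entropy estimates behind Theorem \ref{thm_infinite-hyperbolic} through the filling) or a new construction of small-entropy pseudo-Anosov braids with prescribed closure.
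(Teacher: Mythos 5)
The statement you set out to prove is Conjecture \ref{conj_sakuma-fiber}, which the paper explicitly leaves open: the paper establishes $\ell_g(M_L) \asymp 1/g$ only for special families of links, namely closures of homogeneous pure braids of the form $\sigma_{j_1}^{2m_1}\cdots\sigma_{j_k}^{2m_k}$ in which every generator appears (Theorem \ref{thm_application}), trivial links (Theorem \ref{thm_trivial-link}), and the chain-link Dehn fillings of Theorem \ref{thm_infinite-hyperbolic}. Your proposal is likewise not a proof. The lower bound via Penner is correct and is exactly what the paper does in Proposition \ref{prop_finite}, but everything you write about the upper bound is a plan rather than an argument, and you yourself concede this: you write that controlling reducibility, the Perron--Frobenius contribution, and the identification of the branched cover ``is the crux'' and that a successful attack ``likely requires'' further ideas. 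So the gap is the entire upper bound, which is precisely the open content of the conjecture; there is nothing here that goes beyond what the paper already knows.

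Two technical points in your sketch are also off relative to the machinery the paper actually has. First, the passage from braids to elements of $\mathcal{D}_g(M_L)$ is not ``any pseudo-Anosov braid on $2g+2$ strands whose closure is $L$ lifts with the same dilatation'': by Theorem \ref{thm_2-fold} it is the \emph{circular plat closure} $C(b)$ of $b \in B_{2g+2}$ that must equal $L$, the resulting element of $\mathcal{D}_g(M_L)$ is $\mathfrak{t}(\widetilde{b})$ built from the \emph{skew-palindromization} of $b$, and preservation of the dilatation under $\mathfrak{t}$ requires the non-$1$-pronged condition $\diamondsuit$ of Lemma \ref{lem_disk} --- none of which your reduction tracks. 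Second, the three obstacles you identify (reducible composites, non-shrinking Perron--Frobenius blocks) are artifacts of your Markov-stabilization-plus-splicing strategy; the paper's method, where it applies, avoids them entirely by working inside a single hyperbolic mapping torus $T_{\widetilde{\beta}}$: the disk twists of Lemmas \ref{lem_disktwist-skew} and \ref{lem_disktwist-skew2} produce, within one fibered cone, primitive integral classes $(k+1,1)$ whose monodromies are automatically pseudo-Anosov, and Fried's theorem (Theorem \ref{thm_Fried_1}) forces their normalized entropies to converge, giving dilatation $O(1/g)$ via Proposition \ref{prop_finite}. What actually blocks the general case is the seeding step: for an arbitrary link $L$ one must produce braids $\beta_{(1)},\dots,\beta_{(n)}$ increasing in the middle with $C(\beta_{(j)}^{\bullet}) = L$, intersection numbers covering all residues modulo $n$, and pseudo-Anosov skew-palindromizations. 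The paper can verify that last condition only through the linking-number criterion of Lemma \ref{lem_torus-link} (homogeneous case) or Kra's criterion (trivial-link case), and no criterion covering arbitrary $L$ is known; your proposal does not supply one.
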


We ask the following question. 

\begin{ques}
Is there a closed orientable $3$-manifold $M$ 
such that the minimal entropy $\ell_g(M)$ has a uniform lower bound? 
\end{ques}

This paper is organized as follows. 
In Section \ref{section_backgrounds} 
we review basic facts on braids groups, mapping class groups and pseudo-Anosov mapping classes. 
In Section \ref{section_braids-increasing-middle} 
we introduce the notion of braids that are increasing in the middle. 
Then we combine some results in \cite{HiroseKin201,HiroseKin202} 
into new claims that can be used for the study of pseudo-Anosov elements in the set $\mathcal{D}_g(M_L)$ 
for each link $L$ in $S^3$.  
In Section \ref{section_applications} 
we prove Theorems \ref{thm_infinite} and \ref{thm_infinite-hyperbolic} 
and give some applications. 

\subsection*{Acknowledgments}  
We thank 
 Hirotaka Akiyoshi, Jessica Purcell and Han Yoshida for their information about the hyperbolic structures 
 on the complements of minimally twisted $2k$-chain links. 
 We thank Yuya Koda and Makoto Sakuma for helpful comments.

\section{Backgrounds and preliminaries} 
\label{section_backgrounds}

\subsection{Homogeneous braids and skew-palindromic braids} 
\label{subsection_homogeneous}

\begin{center}
\begin{figure}[t]
\includegraphics[height=4cm]{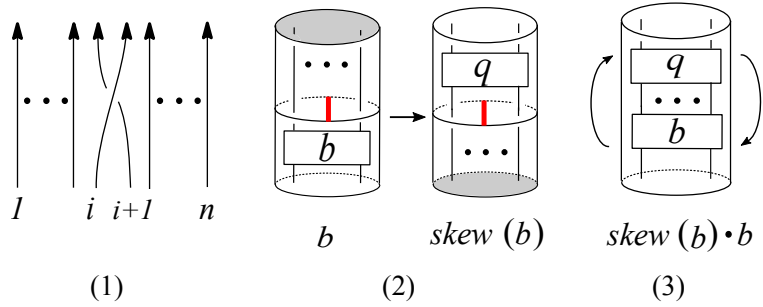}
\caption{(1) $\sigma_i \in B_n$. 
(2) The involution $R: D^2 \times [0,1] \to D^2 \times [0,1] $ 
with the fixed point set 
$\{( \pm r i, \frac{1}{2})\ |\ 0 \le r \le 1\} \subset D^2 \times \{\tfrac{1}{2}\}$. 
(3) The braid $\widetilde{b}= \mathrm{skew}(b) \cdot b$ that is invariant under the involution $R$. 
}
\label{fig_braids}
\end{figure}
\end{center}

\begin{center}
\begin{figure}[t]
\includegraphics[height=2.8cm]{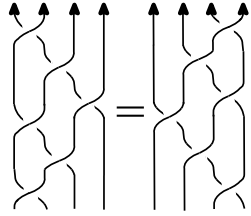}
\caption{A half twist $ \Delta_4 = \mathrm{skew}(\Delta_4) \in B_4$.} 
\label{fig_halftwist}
\end{figure}
\end{center}

Let $B_n$ be the (planar) braid group with $n$ strands. 
Let $a_1, \dots, a_n$ be the bottom end points of an $n$-braid $b \in B_n$. 
We call $a_i$'s the base points of $b$. 
We put indices $1, \dots, n$  to indicate the base points $a_1, \dots, a_n$ respectively.  
Let $\sigma_i$ ($i= 1, \dots, n$) denote the Artin generator of $B_n$ as  in Figure \ref{fig_braids}(1).

A braid word written by $\sigma_i^{\pm 1}$ ($i= 1, \dots, n-1$) 
is said to be {\it homogeneous} if 
for each $i \in \{1, \dots, n-1\}$, the exponents of all occurrences of $\sigma_i$ have the same sign. 
A braid $b$ is  said to be {\it homogeneous} if 
it can be represented by a homogeneous word. 
For example, 
the braid $\sigma_1 \sigma_3 \sigma_2^{-1} \sigma_3^2 \sigma_2^{-3}$ is homogeneous.

Now, we define an involution 
\begin{eqnarray*}
\mathrm{skew}: B_n &\to& B_n
\\
 \sigma_{n_1}^{\epsilon_1} \sigma_{n_2}^{\epsilon_2} \dots \sigma_{n_k}^{\epsilon_k} 
 &\mapsto& 
 \sigma_{n-n_k}^{\epsilon_k} \dots \sigma_{n- n_2}^{\epsilon_2} \sigma_{n-n_1}^{\epsilon_1}, 
\hspace{5mm} \epsilon_i = \pm 1.
\end{eqnarray*}
The map $\mathrm{skew}$ is an anti-homomorphism. 
A braid $b \in B_n$ is said to be {\it skew-palindromic} if $\mathrm{skew}(b)= b \in B_n$.

Note that 
$\mathrm{skew}: B_n \to B_n$ is induced by the  involution $R$ on the cylinder $D^2 \times [0,1]$: 
\begin{eqnarray*}
R: D^2 \times [0,1] &\to& D^2 \times [0,1], 
\\
(r e^{i \theta}, t) &\mapsto&  (r e^{i (\pi - \theta)}, 1-t)
\end{eqnarray*}
see Figure \ref{fig_braids}(2). 
Here we identify the disk $D^2$ with the unit disk centered at the origin in the complex plane ${\Bbb C}$. 

Notice that  the product $\mathrm{skew}(b) \cdot b \in B_n$  is a skew-palindromic braid  for any $b \in B_n$. 
We put 
$$\widetilde{b} := \mathrm{skew}(b) \cdot b,$$  
and we say that $\widetilde{b}$ is the {\it skew-palindromization} of $b$. 
See Figure \ref{fig_braids}(3).

\begin{ex}
\label{ex_homogeneous}
For a braid $b= \sigma_3^2 \sigma_4^{-2} \in B_5$, 
the skew-palindromization is 
$$\widetilde{b}= \mathrm{skew}(b) \cdot b = \sigma_1^{-2} \sigma_2^2 \sigma_3^2 \sigma_4^{-2} ,$$
that is a homogeneous braid. 
\end{ex}

Let $\Delta= \Delta_n \in  B_n$ be a half twist defined by 
\begin{eqnarray*}
\Delta
&=& (\sigma_1 \cdots \sigma_{n-1}) (\sigma_1 \cdots \sigma_{n-2}) \cdots (\sigma_1 \sigma_2) \sigma_1
\\
&=&
\sigma_{n-1}(\sigma_{n-2} \sigma_{n-1}) \cdots (\sigma_2 \cdots \sigma_{n-1}) (\sigma_1 \cdots \sigma_{n-1}). 
\end{eqnarray*}
See Figure \ref{fig_halftwist}. 
This means that 
$\Delta= \mathrm{skew}(\Delta)$, and hence 
$\Delta \in B_n$ is skew-palindromic for each $n$.  


\subsection{Dilatations and normalized entropies of braids}

Let $D_n$ be the $n$-punctured disk. 
We consider the mapping class group $\mathrm{MCG}(D_n)$, 
the group of isotopy classes of orientation preserving self-homeomorphisms on $D_n$ 
preserving  the boundary $\partial D$ of the disk setwise. 
There exists a surjective homomorphism 
$$\Gamma: B_n \rightarrow \mathrm{MCG}(D_n)$$
which sends each generator $\sigma_i$ to the right-handed half twist $h_i$ 
between the $i$-th and $(i+1)$-th punctures. 
Since the kernel of $\Gamma$ is isomorphic to the center $Z(B_n) = \langle \Delta^2 \rangle$ 
generated by a full twist $\Delta^2$, 
we have 
$$B_n/  \langle \Delta^2 \rangle   \simeq  \mathrm{MCG}(D_n).$$

Collapsing the boundary $\partial D$ to a puncture in the sphere $\Sigma_0$, 
we  have a homomorphism 
$$\mathfrak{c}: \mathrm{MCG}(D_n) \rightarrow \mathrm{MCG}(\Sigma_{0, n+1}).$$
We say that $b \in B_n$ is {\it periodic} (resp. {\it reducible}, {\it pseudo-Anosov}) if 
the mapping class $\mathfrak{c}(\Gamma(b)) $ is of the corresponding Nielsen-Thurston type. 

When $b \in B_n$ is pseudo-Anosov, 
we call $\lambda(b):= \lambda(\mathfrak{c}(\Gamma(b)) )$ the {\it dilatation} of $b$, and 
call $\mathrm{Ent}(b): = \mathrm{Ent}(\mathfrak{c}(\Gamma(b)) )$  the {\it normalized entropy} of $b$, 
see (\ref{equation_normalized-entropy}). 
By definition we have 
$$\mathrm{Ent}(b)= |\chi(\Sigma_{0,n+1})| \log(\lambda((\mathfrak{c}(\Gamma(b))))) = (n-1) \log(\lambda((\mathfrak{c}(\Gamma(b))))).$$

\subsection{Closures, braided links, circular plat closures of braids}
\label{subsection_closures}

\begin{center}
\begin{figure}[t]
\includegraphics[height=3.5cm]{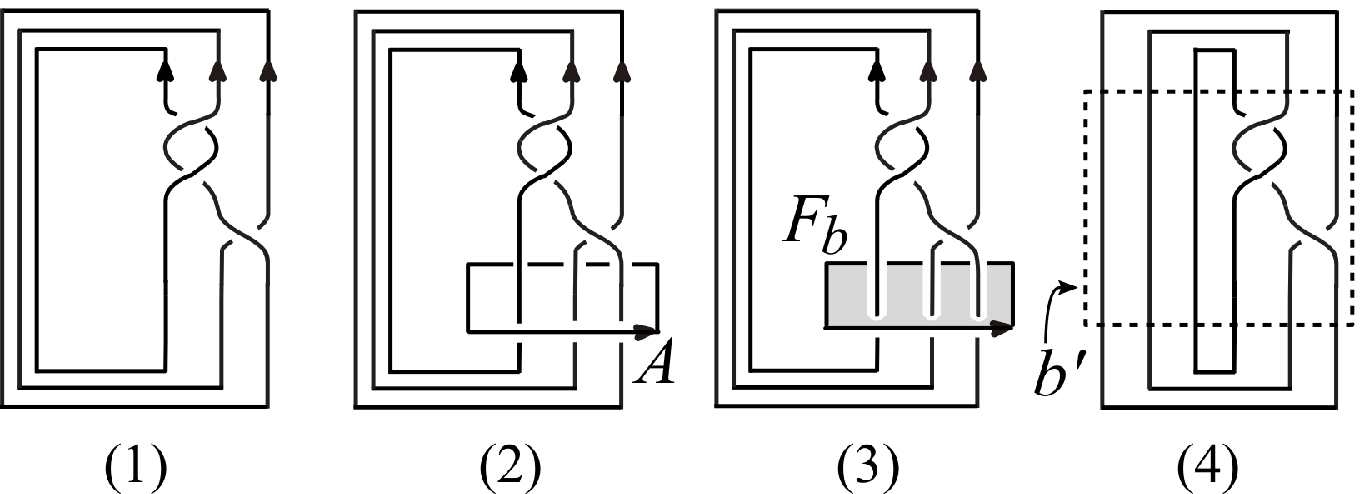}
\caption{Case $b= \sigma_1^2 \sigma_2^{-1} \in B_3$: 
(1) $\mathrm{cl}(b)$. (2) $\mathrm{br}(b)$. (3)  $F_b$. 
(4) $C(b')= \mathrm{cl}(b)$, where $b' = \sigma_4^2 \sigma_5^{-1} \in B_6$.}
\label{fig_closure}
\end{figure}
\end{center}

\begin{center}
\begin{figure}[t]
\includegraphics[height=4cm]{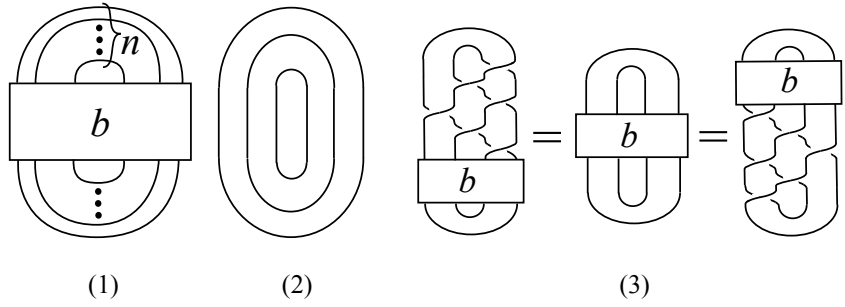}
\caption{ 
(1) $C(b)$ for $b \in B_{2n}$. 
(2) $E_3= C(e_6)$. 
(3) $C(\Delta_4 b) = C(b)= C(b \Delta_4)$ for $b \in B_4$.}
\label{fig:circular-plat}
\end{figure}
\end{center}

In this section we introduce three kinds of links in $S^3$, 
closures, braided links and circular plat closures obtained from planar braids.  
Given a link $L$ in a $3$-manifold $M$, 
we denote by $\mathcal{N}(L)$, a regular neighborhood of $L$. 
We denote by $\mathcal{E}(L)$, the exterior  $M \setminus \mathrm{int}(\mathcal{N}(L))$.

The {\it closure} $\mathrm{cl}(b)$ of $b$ is an oriented knot or link in the $3$-sphere $S^3$ 
whose orientation is induced by the strands of $b$, see Figure \ref{fig_closure}(1). 
The {\it braided link} 
$$\mathrm{br}(b) =A \cup \mathrm{cl}(b)$$ 
is a link in $S^3$ obtained from $\mathrm{cl}(b)$ with the braid axis $A$, see Figure \ref{fig_closure}(2). 
We think of $\mathrm{br}(b)$ as an oriented link in $S^3$ 
choosing an orientation of $A$ arbitrarily. 
(In Section \ref{subsection_i-increasing}, we assign an orientation of $A$ for {\it $i$-increasing braids}.) 
Let $T_b$ denote the exterior  of  the link $\mathrm{br}(b)$: 
$$T_b = \mathcal{E}(\mathrm{br}(b)) = S^3 \setminus \mathrm{int}(\mathcal{N}(\mathrm{br}(b))).$$
We define an $(n+1)$-holed sphere $F_b \subset T_b$ by 
$$F_b= D_A \setminus \mathrm{int}(\mathcal{N} (\mathrm{cl}(b))), $$
where $D_A$ is the disk bounded by the longitude of the regular neighborhood $\mathcal{N}(A)$ 
of the braid axis $A$ of $b$. 
See Figure \ref{fig_closure}(3). 
We give an orientation of $F_b$ which 
induces 
the orientation of $A$. 
The surface $F_b$ is a fiber of the fibration  $T_b \rightarrow S^1$  
and the braid $b$ determines the monodromy $\phi_b: F_b \to F_b$ (up to conjugation). 

The {\it circular plat closure} $ C(b)$ of  $b \in B_{2n}$ with even strands is an unoriented  knot or link in $S^3$ 
as in Figure \ref{fig:circular-plat}(1). 
For example,  the $n$-component trivial link $E_n$ is of the form 
$E_n= C(e_{2n})$, where $e_{2n} \in B_{2n}$ is the identity element, 
see Figure~\ref{fig:circular-plat}(2). 
It is not hard to see that 
the links $C(\Delta b)$, $C(b)$ and $C(b \Delta)$ are ambient isotopy to each other: 
\begin{equation}
\label{equation_halftwist}
C(\Delta b)= C(b)= C(b \Delta)
\end{equation}
as links in $S^3$. 
See Figure \ref{fig:circular-plat}(3).

\begin{remark}
\label{rem_circular-plat}
Any link $L$ in $S^3$ can be represented by the circular plat closure $C(b')$ 
for some braid $b'$ with even strands. 
To see this, we recall the fact that 
any link $L$ can be expressed by the closure $\mathrm{cl}(b)$ for some $b \in B_n$ ($n \ge 1$). 
The desired braid $b'$ with $2n$ strands can be obtained from the $n$-braid $b$ by adding $n$ straight strands: 
$b' = e_n \cup b \in B_{2n}$. 
Then we have $C(b') = \mathrm{cl}(b)= L$ 
as links in $S^3$.  
See Figure \ref{fig_closure}(4). 
\end{remark}

\subsection{A criterion to be pseudo-Anosov braids} 
\label{subsection_criterion}

In this section, we give a criterion for deciding planar braids to be pseudo-Anosov.

Given an oriented link $L= K_1 \cup \cdots \cup K_m$ with $m$ components in $S^3$,   
we denote by $\mathrm{lk}(K_i, K_j)$, 
the linking number between the two components $K_i$ and $K_j$. 
See \cite{Kawauchi96} for the definition of the linking number. 

Let 
$$\pi: B_n \rightarrow \mathfrak{S}_n$$ 
be a surjective homomorphism 
from the $n$-braid group $B_n$ to the permutation group $\mathfrak{S}_n$ of degree $n$ 
which sends  $\sigma_j$ to the transposition $(j, j+1)$. 
A braid $b \in B_ n$ is {\it pure} if $\pi(b)$ is the identity element of $\mathfrak{S}_n$ . 

For example, a $3$-braid $\beta= \sigma_1^4 \sigma_2^{-2}$ is pure. 
Let $\mathrm{cl}(\beta) = \ell_1 \cup \ell_2 \cup \ell_3$ be the closure of $\beta$, 
where  $\ell_i$ denotes the closure $\mathrm{cl}(\beta(i))$ of the $i$-th strand $\beta(i)$ 
with the base point $a_i$ for $i= 1,2,3$. 
Then 
$\mathrm{l k}(\ell_1, \ell_2)= 2$, $\mathrm{l k}(\ell_2, \ell_3)= -1$ and 
$\mathrm{l k}(\ell_3, \ell_1)= 0$.

\begin{prop}[Kobayashi-Umeda \cite{KobayashiUmeda10}]
\label{prop_KU}
Let $\beta  \in B_n$ be a pure braid for $n \ge 3$. 
Let $\mathrm{cl}(\beta) = \ell_1 \cup \dots \cup \ell_n$ be the closure of $\beta$, 
where $\ell_ i$ denotes the closure $\mathrm{cl}(\beta(i))$ of the $i$-th strand $\beta(i)$ with the base point $a_i$ 
for $i= 1, \dots, n$. 
\begin{enumerate}
\item[(1)]
Suppose that $\beta$ is  periodic. 
Then there exists an integer $n_0$ such that 
$\mathrm{l k}(\ell_i, \ell_j)= n_0$ for all $i,j$ with $i \ne j$. 

\item[(2)] 
Suppose that $\beta$ is reducible. 
Let $c$ be an inner most component of the system of the reducing curves for 
the mapping class $\Gamma(\beta) \in \mathrm{MCG}(D_n)$, 
and let $D_c$ be the disk bounded by $c$. 
Suppose that $a_s$ and $a_t$ are distinct base points in $D_c$. 
Then for each base point $a_j \not\in D_c$, 
the equality $\mathrm{l k}(\ell_j, \ell_s)=\mathrm{l k}(\ell_j, \ell_t)$ holds.  
\end{enumerate}
\end{prop}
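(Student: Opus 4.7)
The plan is to handle the two cases separately, using a mapping-torus description of $\mathrm{cl}(\beta)$ in both.

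For (1), the strategy is to reduce to the case $\beta \in \langle \Delta^2 \rangle$. Since $\ker \Gamma = \langle \Delta^2 \rangle$ by the excerpt, it suffices to show $\mathfrak{c} \circ \Gamma$ is injective on $B_n / \langle \Delta^2 \rangle$; this follows from Dehn--Nielsen--Baer applied to $D_n$ (with $\partial D$ fixed setwise) and to $\Sigma_{0,n+1}$, both of whose mapping class groups embed into $\mathrm{Out}(F_n)$ with the same image constraints coming from the conjugacy classes of the peripheral loops. When $\beta$ is pure and periodic, $\phi := \mathfrak{c}(\Gamma(\beta))$ is a pure periodic mapping class on $\Sigma_{0,n+1}$. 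By Nielsen realization, $\phi$ admits a finite-order representative, which after filling in the punctures extends to a finite-order orientation-preserving homeomorphism of $S^2$; by Ker\'ekj\'art\'o's theorem this is conjugate to a rotation, but a nontrivial rotation has only two fixed points whereas $\phi$ fixes all $n+1 \ge 4$ punctures. Hence $\phi = 1$, so $\beta = \Delta^{2k}$ for some $k \in \mathbb{Z}$. Each pair of strands in $\Delta^2$ winds once around each other, so $\mathrm{cl}(\Delta^{2k})$ has pairwise linking numbers all equal to $k$, giving $n_0 = k$.

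For (2), the strategy is to realize $c$ geometrically as an invariant torus separating $\ell_s \cup \ell_t$ from $\ell_j$. The exterior of $\mathrm{cl}(\beta)$ inside the solid torus $D \times S^1 \subset S^3$ is homeomorphic to the mapping torus of a representative $\hat\beta \colon D_n \to D_n$ of $\Gamma(\beta)$. Since $c$ is a reducing curve, we may adjust $\hat\beta$ within its mapping class so that $\hat\beta(\hat c) = \hat c$ setwise for a chosen representative $\hat c$ of $c$. Sweeping $\hat c$ through time produces an embedded torus $T_c \subset S^3 \setminus \mathrm{cl}(\beta)$; because $\hat c$ bounds the disk $D_c \subset D$ and $\hat\beta$ preserves $\partial D$, we have $\hat\beta(D_c) = D_c$, and sweeping $D_c$ out gives a solid torus $V \subset S^3$ with $\partial V = T_c$. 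By construction, $\ell_s, \ell_t \subset V$ and $\ell_j \subset S^3 \setminus V$.

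The group $H_1(V) \cong \mathbb{Z}$ is generated by the core of $V$, which runs once through the $S^1$-direction of the mapping torus, so the class $[\gamma] \in H_1(V)$ of a closed curve is detected by its winding number around $S^1$. Purity of $\beta$ forces each strand $\beta(s)$ to run monotonically from $a_s \times \{0\}$ to $a_s \times \{1\}$, so its closure $\ell_s$ winds exactly once around $S^1$, representing a generator of $H_1(V)$; the same holds for $\ell_t$. Thus $[\ell_s] = [\ell_t]$ in $H_1(V)$, and via the inclusion $V \hookrightarrow S^3 \setminus \ell_j$ these classes descend to the same class in $H_1(S^3 \setminus \ell_j) \cong \mathbb{Z}$, which is precisely the linking number $\mathrm{lk}(\ell_j, \ell_s) = \mathrm{lk}(\ell_j, \ell_t)$. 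The most delicate ingredients are, for (1), pinning down $\ker(\mathfrak{c} \circ \Gamma)$ so that periodicity on the sphere forces $\beta$ into the center, and for (2), arranging the representative so that $\hat c$ is literally preserved, after which the homological computation is immediate.
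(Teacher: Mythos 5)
The paper offers no proof of this proposition to compare against: it is quoted from Kobayashi--Umeda, and the reader is referred to Propositions 1 and 2 of \cite{KobayashiUmeda10} for the two parts. Judged on its own terms, your argument is essentially sound. For (1), the chain ``pure and periodic $\Rightarrow$ the capped-off class on $\Sigma_{0,n+1}$ is trivial $\Rightarrow$ $\beta\in\langle\Delta^2\rangle$ $\Rightarrow$ all pairwise linking numbers equal $k$'' is correct; the injectivity of $\mathfrak{c}$ on $B_n/\langle\Delta^2\rangle$ is the standard fact that capping the boundary of $D_n$ kills exactly the boundary Dehn twist $\Gamma(\Delta^2)$ (your detour through Dehn--Nielsen--Baer is heavier than needed but not wrong), and Nielsen realization plus Ker\'ekj\'art\'o correctly rules out a nontrivial finite-order class fixing $n+1\ge 4$ punctures. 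For (2), the solid-torus mechanism is right: $[\ell_s]=[\ell_t]$ in $H_1(V)\cong\mathbb{Z}$, pushed into $H_1(S^3\setminus\ell_j)\cong\mathbb{Z}$, is precisely the equality of linking numbers.

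The one step you assert without justification in (2) is that a representative of $\Gamma(\beta)$ can be chosen to fix the single curve $\hat c$ setwise. Invariance of the reduction system only guarantees that the \emph{system} is preserved up to isotopy; its components may a priori be permuted, and for non-pure braids they can be. Purity rescues you: $\hat\beta(D_c)$ is a disk of the system containing the same punctures as $D_c$ (in particular $a_s$), and two distinct, disjoint, pairwise non-isotopic curves of the system cannot bound disks sharing a puncture unless one disk strictly contains the other, which (since they contain the same punctures) would force $c$ and its image to cobound an unpunctured annulus and hence be isotopic -- a contradiction. So $[\hat\beta(\hat c)]=[\hat c]$ and the representative can indeed be adjusted. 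With that line added the proof is complete. (A minor remark: your argument never uses that $c$ is innermost; any $\Gamma(\beta)$-invariant essential curve would do, so you in fact prove a slightly stronger statement.)
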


The proof of the claim (1) (resp. the claim (2)) in Proposition \ref{prop_KU} can be found in 
\cite[Proposition 1]{KobayashiUmeda10} (resp. \cite[Proposition 2]{KobayashiUmeda10}). 
For the definition of {\it the system of the reducing curves} in the claim (2), 
see \cite{KobayashiUmeda10}, \cite[Chapter 13.2.2]{FarbMargalit12}

\begin{lem}
\label{lem_criterion}
Let $\beta  \in B_n$ be a pure braid for $n \ge 3$. 
Let $\ell_i$ ($i= 1, \dots, n$) be as in Proposition \ref{prop_KU}. 
Suppose that for any proper subset 
$$\mathcal{I}=\{i_1, \ldots, i_k\} \subsetneq \mathcal{J}:= \{1,2, \ldots,n\} $$  
consisting of $k$ distinct elements 
with $2 \le k < n$, 
there exist three elements $j \in \mathcal{J} \setminus  \mathcal{I}$ and $i_s, i_t \in \mathcal{I}$ 
such that 
$\mathrm{l k} (\ell_j, \ell_{i_s}) \ne \mathrm{l k} (\ell_j, \ell_{i_t}) $. 
Then $\beta$ is pseudo-Anosov. 
\end{lem}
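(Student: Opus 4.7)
The plan is to argue by contradiction using the Nielsen--Thurston trichotomy, ruling out the periodic and reducible cases via Proposition~\ref{prop_KU} of Kobayashi--Umeda. Since $\beta \in B_n$ with $n \ge 3$ is pure, the mapping class $\mathfrak{c}(\Gamma(\beta))$ is exactly one of periodic, reducible, or pseudo-Anosov, so once the first two are excluded the lemma follows.

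First I would handle the periodic case. Suppose $\beta$ is periodic. By Proposition~\ref{prop_KU}(1) there is an integer $n_0$ with $\mathrm{lk}(\ell_i, \ell_j) = n_0$ for all $i \ne j$. Choose the proper subset $\mathcal{I} = \{1,2\} \subsetneq \mathcal{J}$, which is admissible because $n \ge 3$ guarantees $2 \le k=2 < n$. Then for every $j \in \mathcal{J}\setminus\mathcal{I}$ and every pair $i_s,i_t \in \mathcal{I}$,
$$\mathrm{lk}(\ell_j,\ell_{i_s}) = n_0 = \mathrm{lk}(\ell_j,\ell_{i_t}),$$
so no triple of the kind required by the hypothesis exists for this $\mathcal{I}$, a contradiction.

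Next, suppose $\beta$ is reducible. Let $c$ be an innermost component of the system of reducing curves for $\Gamma(\beta) \in \mathrm{MCG}(D_n)$, and let $D_c \subset D$ be the disk that $c$ bounds. Since $c$ is essential in $D_n$, it is neither peripheral nor bounds a disk with $0$ or $1$ puncture, so $D_c$ contains at least two punctures and at most $n-1$ punctures. Set
$$\mathcal{I} = \{\, i \in \mathcal{J} : a_i \in D_c \,\},$$
which is a proper subset of $\mathcal{J}$ with $2 \le |\mathcal{I}| < n$. Proposition~\ref{prop_KU}(2) then gives $\mathrm{lk}(\ell_j,\ell_s) = \mathrm{lk}(\ell_j,\ell_t)$ for every $j \in \mathcal{J}\setminus\mathcal{I}$ and every pair $i_s,i_t \in \mathcal{I}$, again contradicting the hypothesis. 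Hence $\beta$ must be pseudo-Anosov.

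The proof is essentially a contrapositive packaging of Proposition~\ref{prop_KU}, so there is no serious obstacle. The only point that deserves a sentence of care is that an innermost reducing curve on $D_n$ always separates the punctures into two sets of sizes in $\{2,\ldots,n-1\}$ and $\{1,\ldots,n-2\}$, which is exactly what is needed so that the set $\mathcal{I}$ produced from $D_c$ is of admissible size $2 \le |\mathcal{I}| < n$; this in turn is why the hypothesis $n \ge 3$ is imposed, since otherwise no admissible $\mathcal{I}$ would exist.
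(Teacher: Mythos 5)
Your proposal is correct and follows essentially the same route as the paper: rule out the periodic case via Proposition~\ref{prop_KU}(1) and the reducible case by applying Proposition~\ref{prop_KU}(2) to the set of punctures enclosed by an innermost reducing curve, then invoke the Nielsen--Thurston trichotomy. The only difference is that you spell out a couple of small points the paper leaves implicit (the explicit choice $\mathcal{I}=\{1,2\}$ in the periodic case, and the observation that an innermost reducing disk contains between $2$ and $n-1$ punctures), which is fine.
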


\begin{proof}
By Proposition \ref{prop_KU}(1), 
the braid $\beta$ with the assumption of Lemma \ref{lem_criterion} can not be periodic. 
Assume that $\beta$ is reducible. 
Let $c$ be an inner most component of  the system of reducing curves for the mapping class $\Gamma(\beta) \in \mathrm{MCG}(D_n)$,   
and let $D_c$ be  the disk bounded by $c$. 
Let $a_{i_1}, \ldots, a_{i_k}$ be the set of all base points of $\beta$ contained in $D_c$. 
Then $\{1, \dots, n\} \setminus \{i_1, \dots, i_k\} \ne \emptyset$. 
By the assumption of Lemma \ref{lem_criterion}, 
there exist three elements 
$j \in \{1,2, \ldots,n\} \setminus  \{i_1, \ldots, i_k\}$ 
and $i_s, i_t \in \{i_1, \ldots, i_k\}$ such that 
$\mathrm{l k} (\ell_j, \ell_{i_s}) \ne \mathrm{l k} (\ell_j, \ell_{i_t}) $. 
By the choice of $j$, we have 
$a_j \not\in D_c$ for the base point $a_j$ of the strand $\beta(j)$ 
and $a_{i_s}, a_{i_t} \in D_c$. 
By Proposition \ref{prop_KU}(2), 
it must hold that 
$\mathrm{l k} (\ell_j, \ell_{i_s}) = \mathrm{l k} (\ell_j, \ell_{i_t}) $. 
This is a contradiction, and hence $\beta$ is not reducible. 
Since $\beta$ is neither periodic nor reducible, 
we conclude that $\beta$ is pseudo-Anosov.  
\end{proof}

\begin{lem} 
 \label{lem_torus-link}
Let  $b \in B_n$ be a pure braid for $n \ge 4$ of the form 
$$b= \sigma_{j_1}^{2m_1} \sigma_{j_2}^{2m_2} \dots \sigma_{j_k}^{2m_k},$$
where $m_1, \dots, m_k$ are non-zero integers and 
$j_1, \dots, j_k \in \{1, \dots, n-1\}$. 
Suppose that $b$ is homogeneous, 
and each $\sigma_i$ for $i= 1, \dots, n-1$ appears in $b$ at least once, 
i.e., 
$\{j_1, \dots, j_k\}= \{1, \dots, n-1\}$. 
Then $b$ is pseudo-Anosov. 
In particular, if 
$b=  \sigma_1^{2 m_1} \sigma_2^{2 m_2} \dots \sigma_{n-1}^{2 m_{n-1}} \in B_n$, 
then $b$ is pseudo-Anosov. 
\end{lem}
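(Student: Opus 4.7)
The plan is to verify the hypotheses of Lemma~\ref{lem_criterion} by computing the pairwise linking numbers of the closure $\mathrm{cl}(b) = \ell_1 \cup \cdots \cup \ell_n$. Because $b$ is pure and each factor $\sigma_{j_l}^{2 m_l}$ is itself a pure braid (since every exponent is even), no permutation of strands occurs at any intermediate stage of the braid word. Consequently the factor $\sigma_{j_l}^{2 m_l}$ always acts between the strands labeled $j_l$ and $j_l + 1$, and contributes $m_l$ to $\mathrm{lk}(\ell_{j_l}, \ell_{j_l + 1})$ and $0$ to every other linking number. Setting $n_i := \sum_{l : j_l = i} m_l$, this gives
\[
\mathrm{lk}(\ell_i, \ell_{i+1}) = n_i \quad (1 \le i \le n-1), \qquad \mathrm{lk}(\ell_i, \ell_j) = 0 \ \text{whenever}\ |i - j| \ge 2.
\]
Homogeneity forces all the $m_l$ with $j_l = i$ to share a common sign, and since each $\sigma_i$ appears at least once in $b$, we conclude $n_i \ne 0$ for every $i \in \{1, \dots, n-1\}$. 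This is the only point at which the standing hypotheses are used.

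Next, fix a proper subset $\mathcal{I} \subsetneq \{1, \dots, n\}$ with $2 \le |\mathcal{I}| < n$, and seek $j \notin \mathcal{I}$ and $s, t \in \mathcal{I}$ with $\mathrm{lk}(\ell_j, \ell_s) \ne \mathrm{lk}(\ell_j, \ell_t)$. Because $\mathcal{I}$ is nonempty and proper, some pair of consecutive integers is split between $\mathcal{I}$ and $\mathcal{I}^c$; by the obvious left/right symmetry we may arrange that $s \in \mathcal{I}$ and $j = s+1 \in \mathcal{I}^c$. Then $\mathrm{lk}(\ell_j, \ell_s) = n_s \ne 0$. If $\mathcal{I}$ contains any element $t$ with $|t - j| \ge 2$, then $\mathrm{lk}(\ell_j, \ell_t) = 0 \ne n_s$ and the separation condition holds.

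The only residual case is that every element of $\mathcal{I}$ lies at distance exactly $1$ from $j$, which forces $\mathcal{I} = \{s, s+2\}$. Here one replaces $j$ by a better choice. If $s \ge 2$, take $j' = s - 1 \in \mathcal{I}^c$; then $\mathrm{lk}(\ell_{j'}, \ell_s) = n_{s-1} \ne 0$ while $\mathrm{lk}(\ell_{j'}, \ell_{s+2}) = 0$. If $s = 1$, so $\mathcal{I} = \{1, 3\}$, we use $n \ge 4$ to take $j' = 4 \in \mathcal{I}^c$; then $\mathrm{lk}(\ell_{j'}, \ell_3) = n_3 \ne 0$ while $\mathrm{lk}(\ell_{j'}, \ell_1) = 0$. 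In every case the hypothesis of Lemma~\ref{lem_criterion} is satisfied, so $b$ is pseudo-Anosov; the specialization $b = \sigma_1^{2 m_1} \sigma_2^{2 m_2} \cdots \sigma_{n-1}^{2 m_{n-1}}$ is an immediate instance. The main obstacle is precisely the residual case $\mathcal{I} = \{s, s+2\}$: the naive boundary-adjacent choice $j = s+1$ fails to separate the two points of $\mathcal{I}$ when $n_s = n_{s+1}$, and the hypothesis $n \ge 4$ is exactly what guarantees an alternative vertex $j'$ sitting far enough from one of $s, s+2$ to break the tie.
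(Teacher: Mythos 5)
Your proposal is correct and follows essentially the same route as the paper: it reduces to the linking-number criterion of Lemma~\ref{lem_criterion}, observes that $\mathrm{lk}(\ell_i,\ell_j)\ne 0$ exactly when $|i-j|=1$ (with homogeneity preventing cancellation in $n_i$), and then handles the same combinatorial case analysis, with the hypothesis $n\ge 4$ entering only in the residual case where the chosen subset consists of the two neighbors of the boundary vertex. Your isolation of that residual case as exactly $\mathcal{I}=\{s,s+2\}$ is a slightly cleaner organization of the paper's case split, but the argument is the same in substance.
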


\begin{proof}
Let $\ell_i = \mathrm{cl}(b(i))$ ($i= 1, \dots, n$) be the component of $\mathrm{cl}(b)$ as in Proposition \ref{prop_KU}. 
The assumption of Lemma \ref{lem_torus-link} means that 
$\mathrm{l k} (\ell_i, \ell_j) \ne 0$ if and only if $|i-j|=1$. 
It is sufficient to prove the following: 
For any proper subset $\mathcal{I}= \{i_1, \ldots, i_k\} \subsetneq \mathcal{J}= \{1,2, \ldots,n\} $ with $2 \le k < n$, 
there exist three elements 
$j \in \mathcal{J} \setminus \mathcal{I}$ and $i_s, i_t \in \mathcal{I} $  such that 
\begin{equation}
\label{equation_propersubset}
|j-i_s|= 1 \hspace{2mm}  \mbox{and} \hspace{2mm}  |j-i_t|> 1, 
\end{equation}
i.e., 
$\mathrm{l k} (\ell_j, \ell_{i_s}) \ne 0$ and $\mathrm{l k} (\ell_j, \ell_{i_t}) =0$.  
Then $\mathrm{l k} (\ell_j, \ell_{i_s}) \ne \mathrm{l k} (\ell_j, \ell_{i_t}) $, 
and Lemma \ref{lem_criterion} tells us that $b$ is pseudo-Anosov.

Since $\mathcal{I}$ is a proper subset of $\mathcal{J}$, there are 
$i_u \in \mathcal{I}$ and $h \in \mathcal{J} \setminus \mathcal{I}$ such that $|i_u -h | =1$. 
Moreover we can take an element $i_v \in \mathcal{I}$ such that $i_v \not= i_u$. 
It is possible to take such $i_v \in \mathcal{I}$ 
because $|\mathcal{I}| \geq 2$, 
where $|S|$ denotes the cardinality of the finite set $S$. 
In case where $|i_v -h|>1$,  
the three elements $j:= h$, $i_s:= i_u$ and $i_t:= i_v$ satisfy (\ref{equation_propersubset}). 
In case where $|i_v -h|=1$,  
the three elements $i_u, h, i_v$ are consecutive integers. 
Without loss of generality, we may assume that $i_u < h < i_v$. 
Since $n \geq 4$,  the following cases occur:  
(1) $1= i_u < h < i_v < n$, (2) $1< i_u < h < i_v < n$, (3) $1< i_u < h < i_v = n$. 
In cases (1) and (2), we have $i_v + 1 \in \mathcal{J}$. 
If $i_v + 1 \in \mathcal{I}$, 
then $j:= h$, $i_s:= i_u$ and $i_t:= i_v+1$ satisfy (\ref{equation_propersubset}). 
If $i_v + 1 \not\in \mathcal{I}$, 
then 
$j:= i_v+1$, $i_s:= i_v$ and $i_t:= i_u$ satisfy (\ref{equation_propersubset}). 
In case (3), we can choose three elements $j$, $i_s$ and $i_t$ 
that satisfy (\ref{equation_propersubset}) in the same way as above. 
This completes the proof. 
\end{proof}

\begin{ex}
By Lemma \ref{lem_torus-link}, 
the braid 
$\widetilde{b}=  \sigma_1^{-2} \sigma_2^2   \sigma_3^2 \sigma_4^{-2} \in B_5$ 
given in  Example \ref{ex_homogeneous}  is pseudo-Anosov 
\end{ex}

\subsection{Branched virtual fibering theorem}

\begin{center}
\begin{figure}[t]
\includegraphics[height=3.5cm]{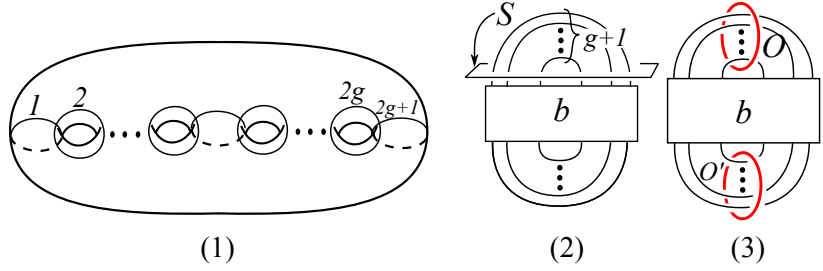}
\caption{(1) Simple closed curves labeled $1, \dots, 2g+1$ in $\Sigma_g$. 
(2) A $(g+1)$-bridge sphere $S$ of $C(b)$ and 
(3) the link $C(b) \cup W$ for $b \in B_{2g+2}$, where $W= O \cup O'$.}
\label{fig:hyperelliptic}
\end{figure}
\end{center}

We recall the branched virtual fibering theorem due to Sakuma \cite{Sakuma81}. 
See also Koda-Sakuma  \cite[Theorem 9.1]{KS22}.

\begin{thm}
\label{thm_Sakuma}
Let $M$ be a closed orientable $3$-manifold. 
Suppose that $M$ admits a genus $g$ Heegaard splitting. 
Then there exists a $2$-fold branched cover $\widetilde{M}$ of $M$ 
which is a $\Sigma_g$-bundle over the circle. 
\end{thm}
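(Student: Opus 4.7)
The plan is to construct $\tilde M$ explicitly, following Sakuma's original approach, as a mapping torus of a mapping class on $\Sigma_g$ that commutes with the hyperelliptic involution, together with a $\mathbb{Z}/2$ action whose quotient is $M$ and whose fixed set is a link. The starting point is the classical Birman--Hilden fact: the hyperelliptic involution $\iota : \Sigma_g \to \Sigma_g$ extends to an orientation-preserving involution $\hat\iota_i$ on each handlebody $V_i$ bounded by $\Sigma_g$, uniquely up to isotopy, with quotient $B^3$ and fixed set $g+1$ unknotted arcs. This realizes each $V_i$ as a $2$-fold branched cover of $B^3$.

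Given the Heegaard splitting $M = V_1 \cup_\phi V_2$, the naive strategy of directly quotienting $M$ by $\iota$ fails because the gluing $\phi \in \mathrm{Homeo}(\Sigma_g)$ need not commute with $\iota$. To remedy this, I would pass to a suitable double cover. Specifically, the construction takes two copies of the Heegaard splitting of $M$ and reassembles them with $\iota$-twists in between: explicitly,
\[
\tilde M \;=\; V_1 \cup_\phi V_2 \cup_\iota V_2^{\mathrm{op}} \cup_{\phi^{-1}} V_1^{\mathrm{op}},
\]
closed cyclically by a final $\iota$-gluing. This $3$-manifold carries a tautological involution $\tau$ interchanging each $V_i$ with its opposite copy $V_i^{\mathrm{op}}$ via the extended hyperelliptic actions $\hat\iota_i$. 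A direct inspection shows $\tilde M/\tau \cong M$: the quotient collapses each pair $(V_i, V_i^{\mathrm{op}})$ back to $V_i$, reduces the $\iota$-gluings on the intermediate boundaries to identity gluings, and leaves $V_1 \cup_\phi V_2$. The fixed set of $\tau$ is a link $L \subset M$ assembled from the $\hat\iota_i$-fixed arcs, which close up through the $2g+2$ Weierstrass points of $\iota$ on each intermediate Heegaard surface.

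Reading the gluings around the cyclic assembly identifies $\tilde M$ with the mapping torus $T_\psi$ of the element $\psi = \iota \circ \phi^{-1} \circ \iota \circ \phi \in \mathrm{MCG}(\Sigma_g)$. A direct calculation shows that $\psi$ commutes with $\iota$, i.e.\ lies in the hyperelliptic mapping class group; this is exactly what makes a fiberwise $\iota$-action on $T_\psi$ well-defined and compatible with the global involution $\tau$. The main obstacle is to rigorously identify the cyclic four-handlebody assembly with a genuine $\Sigma_g$-bundle over $S^1$, since each handlebody has only one boundary component and the cyclic description must be interpreted via collars and fiber decompositions. The cleanest route is through the Birman--Hilden correspondence: because $\psi\iota = \iota\psi$, the quotient $T_\psi/\iota$ is an $(S^2, 2g+2)$-bundle over $S^1$ (a braided link exterior in $S^2 \times S^1$), whose natural double branched cover along the Weierstrass strands recovers both $T_\psi$ and the cyclic handlebody description, thereby reconciling the two pictures and establishing the desired $\Sigma_g$-fibration on $\tilde M$.
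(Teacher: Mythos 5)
Your proposal has a genuine gap, and it is worth noting first that the paper does not prove this statement at all: it cites Sakuma \cite{Sakuma81} and Koda--Sakuma \cite[Theorem 9.1]{KS22}, and only proves its own refinement (Theorem~\ref{thm_2-fold}) in the special case where $M$ is itself a $2$-fold branched cover of $S^3$. The central problem with your argument is the role you assign to the hyperelliptic involution $\iota$. First, the asserted ``direct calculation'' is wrong: for $\psi=\iota\phi^{-1}\iota\phi$ one has $\iota\psi\iota=\phi^{-1}\iota\phi\iota=\psi^{-1}$, so $\iota$ conjugates $\psi$ to its \emph{inverse} and commutes with $\psi$ only when $\psi^2=1$. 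Consequently the ``fiberwise $\iota$-action'' on $T_\psi$ is not defined for general $\phi$; and even when it is, its quotient is an $(S^2,2g+2)$-bundle over $S^1$, not $M$, so it cannot be the covering involution you need. Second, and more fundamentally, the involution that \emph{would} have quotient $M$ must reverse the circle direction of $T_\psi$ and fold two of the fibers onto the two handlebodies; but a fold of $\Sigma_g\times[0,\epsilon)$ along $\Sigma_g\times\{0\}$ via $\iota$ is an orientation-reversing involution of the $3$-manifold with isolated fixed points (the $2g+2$ Weierstrass points), and its quotient has cone-on-$\mathbb{RP}^2$ singularities rather than being a handlebody. A branched covering of $M$ over a link requires an orientation-preserving deck involution with $1$-dimensional fixed set, which forces the folding maps on the two turning fibers to be \emph{orientation-reversing} involutions of $\Sigma_g$ with circle fixed sets (reflections with quotient a surface with boundary); the monodromy is then a product of two such reflections. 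This is exactly what happens in the paper's Theorem~\ref{thm_2-fold}: there the deck transformation of $T_{\mathfrak{t}(\widetilde{b})}\to M_L$ descends to the involution of $S^2\times S^1$ reflecting the $S^1$ factor, not to the fiberwise hyperelliptic action (whose quotient is $S^2\times S^1$ itself).

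Beyond this, the ``cyclic four-handlebody assembly'' is not a well-defined object ($V_1\cup_\phi V_2$ is already closed, so there is no boundary left to attach $V_2^{\mathrm{op}}$), and the repair you propose does not repair it: identifying $T_\psi/\iota$ with an $S^2$-bundle over $S^1$ says nothing new, since $T_\psi$ is a $\Sigma_g$-bundle by construction; what must be proved is that $T_\psi$ admits an involution with quotient the \emph{given} $M$, and that step is exactly where the argument is missing. To salvage the approach, replace $\iota$ by two orientation-reversing involutions $r,r'$ of $\Sigma_g$ adapted to product/$I$-bundle structures on $V_1$ and $V_2$, set $\psi=r r'$, and check that $(x,t)\mapsto(r(x),-t)$ is an orientation-preserving involution of $T_\psi$ with quotient $V_1\cup_\phi V_2$ branched over a link of core circles.
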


In \cite{HiroseKin202}, the authors gave an alternative construction of surface bundles over the circle 
in Sakuma's result when closed $3$-manifolds are $2$-fold branched covers of $S^3$  branched over links. 
We recall our construction in this section. 

Let $\tau_i$ denote the right-handed Dehn twist 
about the simple closed curve labeled $i$ in Figure \ref{fig:hyperelliptic}. 
There exists a homomorphism 
from the braid group $B_{2g+2}$ to the mapping class group $\mathrm{MCG}(\Sigma_g)$ 
$$\mathfrak{t}: B_{2g+2} \rightarrow \mathrm{MCG}(\Sigma_g)$$ 
which sends $\sigma_i$ to $\tau_i$ for $i=1, \ldots, 2g+1$, 
since $\mathrm{MCG}(\Sigma_g)$ has the same braid relation as $B_{2g+2}$. 
Notice that its image $\mathfrak{t}(B_{2g+2})$ is the {\it hyperelliptic mapping class group} $\mathcal{H}(\Sigma_g)$. 
This is the subgroup of $\mathrm{MCG}(\Sigma_g)$  
consisting of elements with representative homeomorphisms 
that commute with some fixed hyperelliptic involution on $\Sigma_g$.

Let $L$ be a link in $S^3$. 
By Remark \ref{rem_circular-plat} 
we may suppose that $L$ is of the form $L= C(b)$ for some $b \in B_{2g+2}$. 
Let 
$$q= q_{L}: M_L \rightarrow S^3$$ 
denote the $2$-fold  branched covering map of $S^3$ 
branched over $L$. 
We have a $(g+1)$-bridge sphere $S$ for $L=C(b)$ as in Figure \ref{fig:hyperelliptic}(2).  
The $3$-manifold $M_L$ admits a  genus $g$ Heegaard splitting 
with the Heegaard surface $q^{-1}(S)$. 
Consider the trivial link $W= O \cup O'$ with $2$ components and the link 
$C(b) \cup W $ in $S^3$ 
as shown in Figure~\ref{fig:hyperelliptic}(3). 
Then we have the following result.

\begin{thm}[Theorem B in \cite{HiroseKin202}]
\label{thm_2-fold}
Let $q: M_L \rightarrow S^3$ be the $2$-fold branched covering map of $S^3$ branched over a link $L= C(b)$ 
for a braid  $b \in B_{2g+2}$. 
Consider the skew-palindromization $\widetilde{b} $ of $b$ 
and the mapping class $\mathfrak{t}(\widetilde{b}) \in \mathcal{H}(\Sigma_g) \subset \mathrm{MCG}(\Sigma_g)$. 
Then $T_{\mathfrak{t}(\widetilde{b})} \rightarrow M_L$ is a $2$-fold branched cover of $M_L$ 
branched over the link $q^{-1}(W)$. 
In particular 
$\mathfrak{t}(\widetilde{b}) \in \mathcal{D}_g(M_L)$. 
\end{thm}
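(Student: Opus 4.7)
Plan: The strategy is to realize $T_{\mathfrak{t}(\widetilde{b})}$ as a $(\mathbb{Z}/2\times\mathbb{Z}/2)$-branched cover of $S^3$ with branch locus the disjoint link $C(b)\cup W$, so that one $\mathbb{Z}/2$-factor is responsible for branching over $C(b)$ and the other for branching over $W$. Once such a structure is in place, the intermediate quotient by the involution corresponding to the $W$-factor recovers $M_L\to S^3$ (branched over $C(b)$), and the remaining $2$-fold projection $T_{\mathfrak{t}(\widetilde{b})}\to M_L$ is then branched over $q^{-1}(W)$ by functoriality of branched covers in the lattice of intermediate covers of a $(\mathbb{Z}/2\times\mathbb{Z}/2)$-cover.

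The two commuting involutions are constructed as follows. The first is the fiberwise hyperelliptic involution $\hat\iota([x,t]):=[\iota_g(x),t]$, well-defined because $\mathfrak{t}(\widetilde{b})\in\mathcal{H}(\Sigma_g)$ commutes with the hyperelliptic involution $\iota_g$ on $\Sigma_g$. The quotient $T_{\mathfrak{t}(\widetilde{b})}/\hat\iota$ is an $S^2$-bundle over $S^1$, which is $S^2\times S^1$ because the induced monodromy is orientation-preserving, and the branch locus is the closed braid $\mathrm{cl}(\widetilde{b})$ sitting inside the solid torus $D^2\times S^1\subset S^2\times S^1$. The second is an involution $\hat R$ induced by the $R$-invariance of $\widetilde{b}$: using $\widetilde{b}=\mathrm{skew}(b)\cdot b$, I parametrize the mapping torus so that $t\in[0,\tfrac{1}{2}]$ realizes $\mathfrak{t}(b)$ and $t\in[\tfrac{1}{2},1]$ realizes $\mathfrak{t}(\mathrm{skew}(b))$. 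The $R$-reflection about $t=\tfrac{1}{2}$ in the cylinder $D^2\times[0,1]$ (combined with the disk reflection) lifts through the double branched cover $\Sigma_g\times[0,1]\to D^2\times[0,1]$ (branched over $\widetilde{b}$) to an involution commuting with $\iota_g$, and it descends to $\hat R$ on $T_{\mathfrak{t}(\widetilde{b})}$ because skew-palindromicity matches the two halves of the monodromy. The two involutions commute, so together they generate a $\mathbb{Z}/2\times\mathbb{Z}/2$-action.

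It remains to identify the full quotient with $S^3$ and the branch set. The induced $\hat R$-involution on $S^2\times S^1$ has fixed locus consisting of two circles (the imaginary-axis great circles of $S^2$ lying in the fibers $t=0$ and $t=\tfrac{1}{2}$), quotient $S^3$, and residual branch locus the $2$-component unlink, which we identify with $W$. Under this $R$-quotient, the closed braid $\mathrm{cl}(\widetilde{b})$ is carried to the circular plat closure $C(b)$: the $R$-pairing $j\leftrightarrow 2g+3-j$ of the strands of $\widetilde{b}$ folds pairs of strands together along the fixed axis, producing exactly the bridge-arc pattern of $C(b)$ in the bridge decomposition displayed in Figure~\ref{fig:hyperelliptic}(2).

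The main obstacle is this last geometric identification, namely verifying carefully that the $R$-quotient of $\mathrm{cl}(\widetilde{b})\subset S^2\times S^1$ is precisely the circular plat closure $C(b)\subset S^3$ (together with $W$ as the additional branch locus). This requires tracking the individual strands of $\widetilde{b}$ through the folding operation and matching the resulting tangle in each $B^3$-half of $S^3=B^3\cup_{S^2}B^3$ with the bridge arcs of $C(b)$. Once this geometric identification is verified, the $(\mathbb{Z}/2\times\mathbb{Z}/2)$-cover structure is complete, and the theorem follows formally from the lattice of intermediate covers.
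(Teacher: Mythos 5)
Your proposal follows essentially the same route as the paper's proof: both realize the situation as a $\mathbb{Z}/2\mathbb{Z}+\mathbb{Z}/2\mathbb{Z}$ branched cover of $S^3$ over $C(b)\cup W$, with intermediate quotients $M_L$ (branched over $C(b)$) and $M_W=S^2\times S^1$ (branched over $W$), and both hinge on identifying the preimage of $C(b)$ in $S^2\times S^1$ with $\mathrm{cl}(\widetilde{b})$ --- you merely build the deck group from above via the fiberwise hyperelliptic involution and the lift of $R$, where the paper constructs the cover from below via the epimorphism $H_1(S^3\setminus(C(b)\cup W))\to\mathbb{Z}/2\mathbb{Z}+\mathbb{Z}/2\mathbb{Z}$. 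The geometric step you flag as the remaining obstacle (that the $R$-fold carries $\mathrm{cl}(\widetilde{b})$ to the circular plat closure $C(b)$) is exactly the assertion $q_W^{-1}(C(b))=\mathrm{cl}(\widetilde{b})$ that the paper's sketch also states without detail, deferring to \cite{HiroseKin202}.
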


\begin{proof}[Sketch of Proof] 
We regard $\widetilde{M_L}$ as the 	
$\mathbb{Z}/2\mathbb{Z} +  \mathbb{Z}/2\mathbb{Z}$-cover 
of $S^3$ branched over the link $C(b) \cup W$ associated with the epimorphism 
$$H_1(S^3 \setminus (C(b) \cup W) )\to  \mathbb{Z}/2\mathbb{Z} +  \mathbb{Z}/2\mathbb{Z}$$  
which maps the meridians of $C(b)$ to $(1,0)$ and the meridians of $W$ to $(0,1)$. 
Let $q_W : M_W \to S^3$ be the $2$-fold  branched covering map of $S^3$ branched over the link $W$. 
Note that $M_W = S^2 \times S^1$. 
Then $\widetilde{M_L}$ is the $2$-fold  branched cover of $S^2 \times S^1$ 
branched over the link $q_W^{-1} (C(b)) = \mathrm{cl}(\widetilde{b})$ associated with the epimorphism 
$$H_1(S^2 \times S^1 \setminus \mathrm{cl}(\widetilde{b})) \to  \mathbb{Z}/2\mathbb{Z}$$ 
which maps the meridians of  $\mathrm{cl}(\widetilde{b})$ to $1$ and $\{ \mathrm{pt} \} \times S^1$ to $0$. 
Therefore, $\widetilde{M_L}$ is homeomorphic to the mapping torus 
$T_{\mathfrak{t}(\widetilde{b})}$ of $\mathfrak{t}(\widetilde{b})$.  
This completes the proof.
\end{proof}

We are interested in the case where the mapping class $\mathfrak{t}(\widetilde{b}) \in  \mathrm{MCG}(\Sigma_g)$ 
given in Theorem \ref{thm_2-fold} 
is pseudo-Anosov. 
The following lemma will be used in the later section. 

\begin{lem}[Lemma 5 in \cite{HiroseKin202}]
\label{lem_disk}
Let $\beta \in B_{2g+2}$ be a pseudo-Anosov braid and let 
$\Phi_{\beta} : D_{2g+2} \rightarrow D_{2g+2}$ be a pseudo-Anosov homeomorphism 
which represents $\Gamma(\beta) \in \mathrm{MCG}(D_{2g+2})$. 
Suppose that the pseudo-Anosov braid $\beta$ possesses the following condition:  
\begin{quote}
$\diamondsuit$  
The stable foliation $\mathcal{F}$ for $\Phi_{\beta}$ defined on $D_{2g+2}$ 
is not $1$-pronged at the boundary $\partial D$ of the disk. 
\end{quote}
Then $\mathfrak{t}(\beta) \in \mathrm{MCG}(\Sigma_g)$ is pseudo-Anosov, 
and the equality $\lambda(\mathfrak{t}(\beta) ) = \lambda(\beta)$ holds. 
\end{lem}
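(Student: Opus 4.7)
The plan is to realize $\mathfrak{t}(\beta)$ as the lift of $\Phi_{\beta}$ through the hyperelliptic $2$-fold branched cover $\pi: \Sigma_g \to S^2$ whose branch locus is the set of $2g+2$ fixed points of the hyperelliptic involution $\iota$ on $\Sigma_g$. First I would cap $\partial D$ of $D_{2g+2}$ with a disk $D^*$ carrying a central marked point, identify the resulting punctured sphere with $S^2$ minus the branch locus, and extend $\Phi_{\beta}$ by the identity on $D^*$. The extended homeomorphism preserves the $2g+2$ branch points, so it admits a lift $\widetilde{\Phi}: \Sigma_g \to \Sigma_g$ commuting with $\iota$; by the definition of $\mathfrak{t}$ on the Artin generators $\sigma_i \mapsto \tau_i$, the isotopy class $[\widetilde{\Phi}]$ coincides with $\mathfrak{t}(\beta) \in \mathcal{H}(\Sigma_g)$.

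Next I would pull back the stable and unstable measured foliations $\mathcal{F}, \mathcal{F}'$ of $\Phi_{\beta}$ through $\pi$ to obtain a transverse pair $\widetilde{\mathcal{F}}, \widetilde{\mathcal{F}}'$ on $\Sigma_g$ preserved by $\widetilde{\Phi}$. The heart of the argument is to verify that this pair is a genuine pseudo-Anosov pair on the closed surface, i.e., that every interior singular point has at least three prongs. There are three kinds of points to inspect. Away from the branch and ramification loci the cover $\pi$ is a local homeomorphism, so local pictures lift unchanged. At each of the $2g+2$ ramification points, the local model $z \mapsto z^2$ transforms the quadratic differential $z^{k-2}\,dz^2$ modelling a $k$-pronged singularity into $4z^{2k-2}\,dz^2$, a $(2k)$-pronged singularity: this is a regular point when $k=1$ and a singularity with $\ge 4$ prongs when $k \ge 2$, both admissible. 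At each of the two unramified preimages of the center of $D^*$, the prong count is preserved; the hypothesis $\diamondsuit$ excludes exactly the value $n=1$, forcing these points to be either regular ($n=2$) or honest singularities with $\ge 3$ prongs.

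Once these local checks are made, the transverse measures pull back faithfully because $\pi$ is a local homeomorphism off the branch locus, so $\widetilde{\Phi}$ stretches $\widetilde{\mathcal{F}}$ and contracts $\widetilde{\mathcal{F}}'$ by the same factors $\lambda(\beta)^{\pm 1}$ as $\Phi_{\beta}$ does downstairs. This yields simultaneously that $\mathfrak{t}(\beta)$ is pseudo-Anosov and that $\lambda(\mathfrak{t}(\beta)) = \lambda(\beta)$. I expect the main obstacle to be the prong analysis at the two unramified preimages of the point coming from $\partial D$: the hypothesis $\diamondsuit$ is precisely what rules out a $1$-pronged singularity, which is forbidden for a pseudo-Anosov foliation on a closed surface and would otherwise obstruct the conclusion.
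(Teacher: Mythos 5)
Your argument is correct, and it is the standard Birman--Hilden lifting argument that underlies this lemma; note that the paper itself gives no proof here but imports the statement from Lemma~5 of \cite{HiroseKin202}, so there is nothing in this text to compare against line by line. The only loose end in your write-up is the identification of $[\widetilde{\Phi}]$ with $\mathfrak{t}(\beta)$: a homeomorphism of $S^2$ preserving the $2g+2$ branch points has two lifts to $\Sigma_g$, differing by the hyperelliptic involution $\iota$, so a priori $[\widetilde{\Phi}]$ is only determined to be $\mathfrak{t}(\beta)$ or $\iota\,\mathfrak{t}(\beta)$. This is harmless -- $\iota$ is a deck transformation, hence preserves the pulled-back measured foliations, so $\iota\widetilde{\Phi}$ is pseudo-Anosov with the same invariant foliations and dilatation whenever $\widetilde{\Phi}$ is -- but it deserves a sentence. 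Your local computations (the pull-back of $z^{k-2}\,dz^{2}$ under $z\mapsto z^{2}$ doubling the prong count at ramification points, and the prong count being preserved at the two preimages of the capped boundary point, where $\diamondsuit$ rules out the forbidden $1$-pronged case) are exactly right, as is the observation that $\lambda(\beta)$, defined via $\mathfrak{c}(\Gamma(\beta))$ on $\Sigma_{0,2g+3}$, agrees with the stretch factor of $\Phi_{\beta}$ on $D_{2g+2}$.
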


The basic facts on (un)stable foliations for pseudo-Anosov homeomorphisms 
can be found in Chapter 11.2 and Chapter 13 in \cite{FarbMargalit12}.

 \subsection{Thurston norm}
 \label{subsection_thurstonnorm}

 Let $M$ be a $3$-manifold with boundary  (possibly $\partial M = \emptyset$). 
When $M$ is a hyperbolic $3$-manifold, 
there exists a norm $\| \cdot \|$ on $H_2(M, \partial M; {\Bbb R})$, that is called the Thurston norm~\cite{Thurston86}. 
The norm $\| \cdot \|$ has the property such that 
for any integral class $a  \in H_2(M, \partial M; {\Bbb R})$, we have 
$$\|a\|= \min_S \{- \chi(S)\},$$ 
where 
 the minimum is taken over all  oriented surface $S$ embedded in $M$ with $a= [S]$ 
and with no components  of non-negative Euler characteristic. 
The following result by Thurston describes a relation between the norm $\|\cdot\|$ and fibrations on $M$.

\begin{thm}[Thurston~\cite{Thurston86}]
\label{thm_norm}
The norm $\| \cdot \|$ on $H_2(M, \partial M; {\Bbb R})$ has the following properties. 
\begin{enumerate}
\item[(1)]
There exist a set of maximal open cones 
$\mathscr{C}_1, \dots, \mathscr{C}_k$ in $H_2(M, \partial M; {\Bbb R})$ 
and a bijection between the set of isotopy classes of connected fibers of fibrations $M \rightarrow S^1$ 
and the set of primitive integral classes in $\mathscr{C}_1 \cup \cdots \cup \mathscr{C}_k$. 

\item[(2)] 
The restriction of $\|\cdot\|$ to $\mathscr{C}_j$ is linear for each $j = 1, \dots, k$. 

\item[(3)] 
For a fiber $F_a$  of the fibration $M \rightarrow S^1$ associated with a primitive integral class $a$ in $\mathscr{C}_j$ for $j= 1, \dots, k$, 
we have $\|a\| = - \chi (F_a)$. 
\end{enumerate}
\end{thm}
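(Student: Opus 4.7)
The plan is to follow Thurston's original strategy from~\cite{Thurston86}. First I would verify that $\|\cdot\|$, defined by the infimum over representing surfaces, is well-defined and extends to a genuine norm on $H_2(M,\partial M;\mathbb{R})$. The key tool is oriented cut-and-paste: given surfaces $S_1$, $S_2$ representing classes $a_1$, $a_2$, put them in transverse position, perform the oriented double-curve sum along their intersection, and discard any disk or sphere components; the resulting surface represents $a_1+a_2$ and gives the triangle inequality for $-\chi$. Taking disjoint parallel copies yields homogeneity on rational rays, and continuity plus convexity then extend $\|\cdot\|$ from rational to all real classes. Hyperbolicity of $M$ rules out essential surfaces of nonnegative Euler characteristic, so $\|\cdot\|$ vanishes only at $0$.

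Next I would prove that the unit ball $B=\{a:\|a\|\le 1\}$ is a finite-sided rational polytope. Since $\|\cdot\|$ is integer-valued on the integer lattice $H_2(M,\partial M;\mathbb{Z})$ and piecewise linear, standard convex geometry shows that $B$ has only finitely many faces and rational vertices. The open cones over the top-dimensional faces of $B$ are the candidates for the cones $\mathscr{C}_j$, and the restriction of $\|\cdot\|$ to any such open cone is automatically linear, yielding claim (2).

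For the fibered part I would invoke Tischler's correspondence: fibrations $M\to S^1$ are in bijection with nonzero classes in $H^1(M;\mathbb{Z})\cong H_2(M,\partial M;\mathbb{Z})$ that are represented by a nowhere-vanishing closed $1$-form, and connectedness of the fiber corresponds to primitivity of the class. The nowhere-vanishing condition is open in $H^1(M;\mathbb{R})$ by compactness, so the set of real classes carrying a nowhere-vanishing closed $1$-form is open. Its connected components, intersected with the cones over the top-dimensional faces of $B$, produce the fibered cones $\mathscr{C}_j$ and establish (1) modulo the norm-minimality of fibers.

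Finally, the technical heart of the argument is to show that a fiber $F_a$ is norm-minimizing, so $\|a\|=-\chi(F_a)$, and that $-\chi(F_a)$ depends linearly on $a\in\mathscr{C}_j$. Norm-minimality follows because the fibration is a taut foliation whose leaves minimize $-\chi$ in their homology class. Linearity is the delicate step: Thurston constructs a single branched surface carrying every fiber of every fibration whose class lies in a given component of the fibered open set, so that $-\chi$ of any carried surface becomes a linear weighted count of branched-surface sectors. The main obstacle I foresee is implementing this branched-surface construction and verifying that every primitive integral class in a fibered cone is genuinely realized by a fiber, rather than only by some norm-minimizing surface of different topological type; this requires the careful analysis of taut transverse foliations via the branched surface, and is where the bulk of the work lies.
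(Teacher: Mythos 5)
The paper offers no proof of this statement: it is imported verbatim as Thurston's theorem from \cite{Thurston86}, so there is no in-paper argument to compare against. Taken on its own, your outline is a correct high-level account of Thurston's original proof — the oriented double-curve-sum triangle inequality, integrality on the lattice forcing a finite-sided rational polyhedral unit ball (whence linearity on the open cone over each top-dimensional face, i.e.\ claim (2)), Tischler's correspondence with connected fibers matching primitive classes, openness of the fibered classes, and norm-minimality of fibers — and you correctly isolate the genuinely hard step, namely that the fibered classes fill out the entire open cones over certain top-dimensional faces and that every primitive integral class there is realized by a fiber computing the norm. Two small calibration points, neither of which is a gap in the plan: Thurston's own argument for norm-minimality of fibers and for the cone structure runs through the Euler class of the plane field tangent to the fibration and a Milnor--Wood-type inequality rather than through branched surfaces, which are more prominent in the later treatments of Oertel, Fried and Gabai; and the general statement that compact leaves of taut foliations are norm-minimizing is Gabai's subsequent theorem, so for the fibration case one should either cite that or fall back on Thurston's direct Euler-class argument.
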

We call the open cones $\mathscr{C}_j$  {\it fibered cones} of $M$.

\begin{thm}[Fried~\cite{Fried82}] 
\label{thm_Fried_1}
For a fibered cone $\mathscr{C}$ of a hyperbolic $3$-manifold $M$, 
there exists a continuous function 
$\mathrm{ent}: \mathscr{C}  \rightarrow {\Bbb R}$ 
with the following properties. 
\begin{enumerate}

\item[(1)] 
For the monodromy  $\phi_a: F_a \rightarrow F_a$ of the fibration $M \rightarrow S^1$ 
associated with a primitive integral class $a \in \mathscr{C}$, 
we have 
$\mathrm{ent}(a) = \log (\lambda(\phi_a))$, 
i.e., 
$\mathrm{ent}(a)$ equals the entropy of the pseudo-Anosov monodromy $\phi_a$.

\item[(2)] 
$\mathrm{Ent}= \| \cdot\| \mathrm{ent}: \mathscr{C} \rightarrow  {\Bbb R}$  
is a continuous function which becomes constant on each ray through the origin.

\end{enumerate}
\end{thm}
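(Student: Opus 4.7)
The plan is to prove the theorem by realising every fibration whose fiber class lies in the cone $\mathscr{C}$ as a cross-section of one and the same smooth flow on $M$, and then to read off the continuity and homogeneity of $\mathrm{ent}$ from the way the dynamics of the first-return map varies with the cross-section.

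First, I would fix a primitive integral class $a_{0}\in\mathscr{C}$; Theorem \ref{thm_norm}(1) supplies a fibration $p_{0}:M\to S^{1}$ with fiber $F_{a_{0}}$ and pseudo-Anosov monodromy $\phi_{a_{0}}$. Viewing $M$ as the mapping torus $T_{\phi_{a_{0}}}$, one obtains on $M$ a smooth suspension flow $\psi_{t}$ transverse to $F_{a_{0}}$, with topological entropy $h(\psi)=\log\lambda(\phi_{a_{0}})$. I would then establish the substantive step, Fried's cross-section theorem: every primitive integral $a\in\mathscr{C}$ is represented by a surface $F_{a}$ that is also a cross-section of the \emph{same} flow $\psi_{t}$, and $\phi_{a}$ coincides with the first-return map of $\psi_{t}$ to $F_{a}$. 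The idea is to identify the Poincar\'e duals of cross-sections of $\psi_{t}$ with the open convex set of classes in $H^{1}(M;\mathbb{R})$ that are strictly positive on the tangent vector of $\psi_{t}$ at every point of $M$, and then to verify that the cone dual to $\mathscr{C}$ is contained in that set. This identification is the main obstacle; it combines a transversality argument with the use of a flow-invariant foliation to deform a given surface representative of $a$ into a genuine cross-section.

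Once every $\phi_{a}$ is realised as a first-return map of $\psi_{t}$, Abramov's formula yields
$$
\log\lambda(\phi_{a})\;=\;h(\psi)\,\overline{\tau}(a),
$$
where $\overline{\tau}(a)=\int_{F_{a}}\tau_{a}\,d\mu_{a}$ is the average first-return time against the measure of maximal entropy on $F_{a}$. Fried's analysis of the homology direction (asymptotic cycle) $\mu_{\psi}\in H_{1}(M;\mathbb{R})$ of the flow shows that the reciprocal $1/\overline{\tau}(a)$ is the intersection pairing $\langle a,\mu_{\psi}\rangle$ and hence a positive linear functional of $a$ on $\mathscr{C}$. Therefore
$$
\mathrm{ent}(a)\;=\;\frac{h(\psi)}{\langle a,\mu_{\psi}\rangle}
$$
extends from the primitive integral points to a continuous, positive function on $\mathscr{C}$ homogeneous of degree $-1$, which proves (1). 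For (2), Theorem \ref{thm_norm}(2) asserts that $\|\cdot\|$ is linear, hence homogeneous of degree $+1$, on $\mathscr{C}$; consequently $\mathrm{Ent}(a)=\|a\|\,\mathrm{ent}(a)$ is homogeneous of degree zero, and in particular constant along every ray through the origin.
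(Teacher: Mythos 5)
Your overall strategy --- realizing every fiber whose class lies in $\mathscr{C}$ as a cross-section of the single suspension flow $\psi_t$ and controlling the return maps via Abramov's formula and asymptotic cycles --- is exactly Fried's; the paper itself gives no proof of this theorem and simply cites \cite{Fried82}, so the comparison is with Fried's argument. The cross-section step is correctly identified and acceptable as a sketch. The genuine gap is the final formula
$$\mathrm{ent}(a)\;=\;\frac{h(\psi)}{\langle a,\mu_\psi\rangle},$$
which asserts that $1/\mathrm{ent}$ is the restriction to $\mathscr{C}$ of a single linear functional. This is false in general. Fried also proved that $1/\mathrm{ent}$ extends continuously to $\overline{\mathscr{C}}$ and vanishes on $\partial\mathscr{C}\setminus\{0\}$; a nonzero linear functional cannot vanish on the entire boundary of a cone of dimension at least $2$, so your formula is already inconsistent whenever $b_1(M)\ge 2$. (Explicit computations, e.g.\ for the magic manifold, confirm that $\|a\|\,\mathrm{ent}(a)$ is non-constant on a fibered face, whereas your formula would make it the ratio of two linear functionals.)

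The source of the error is the application of Abramov's formula. For a \emph{fixed} flow-invariant probability measure $\mu$, with induced measure $\mu_a$ on the cross-section $F_a$, Abramov gives $h_{\mu_a}(\phi_a)=h_\mu(\psi_1)/\langle a,A_\mu\rangle$, where $A_\mu$ is the asymptotic cycle of $\mu$; this is where the linear pairing legitimately enters. But the measure induced on $F_a$ by the measure of maximal entropy of the \emph{flow} is in general not the measure of maximal entropy of the return map $\phi_a$ (these measures of maximal entropy vary with $a$), so from the flow's measure of maximal entropy $\mu^{\ast}$ one only obtains the inequality $\log\lambda(\phi_a)\ge h(\psi)/\langle a,A_{\mu^{\ast}}\rangle$. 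The correct statement is variational: $1/\mathrm{ent}(a)=\inf_\mu \langle a,A_\mu\rangle/h_\mu(\psi_1)$, the infimum taken over flow-invariant probability measures of positive entropy. This exhibits $1/\mathrm{ent}$ as a concave, positively homogeneous degree-$1$ function on the open cone; concavity gives continuity on the interior, and degree-$(-1)$ homogeneity of $\mathrm{ent}$ combined with the linearity of $\|\cdot\|$ on $\mathscr{C}$ from Theorem \ref{thm_norm}(2) yields property (2). With that repair your argument goes through.
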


\noindent
We call $\mathrm{ent}(a)$ and $\mathrm{Ent}(a)$ the {\it entropy} and  {\it normalized entropy} of 
the class $a \in \mathscr{C}$.  
By Theorem \ref{thm_norm}(3) and Theorem \ref{thm_Fried_1}(1), if $a \in \mathscr{C}$ is a primitive integral class, then 
$$\mathrm{Ent}(a)= \|a\| \mathrm{ent}(a)= |\chi(F_a)| \log (\lambda(\phi_a)) (= \mathrm{Ent}(\phi_a)).$$

\subsection{$i$-increasing braids}
\label{subsection_i-increasing}

\begin{center}
\begin{figure}
\includegraphics[width=4in]{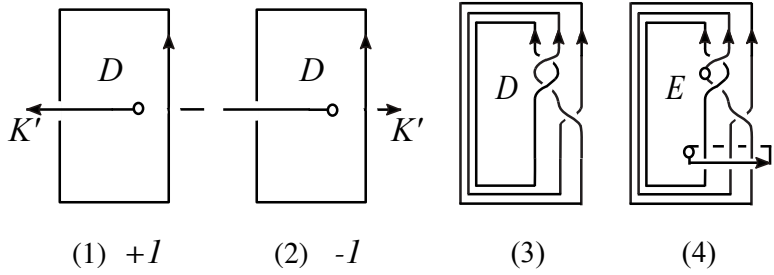} 
\caption{The sign of the intersection point:  
(1) $+1$ and (2) $-1$. 
(3) The associated disk $D = D_{(b,1)}$ and 
(4) the  surface $E= E_{(b,1)}$ 
for a $1$-increasing braid $b= \sigma_1^2 \sigma_2^{-1} \in B_3$. 
($E_{(b,1)}$ is a twice punctured disk in this case.)}
\label{fig_intersection}
\end{figure}
\end{center}

In \cite{HiroseKin201}, the authors introduced {\it $i$-increasing braids}. 
In this section, we review some properties of  $i$-increasing braids 
that are needed in the later section. 

Recall that  $\pi: B_n \rightarrow \mathfrak{S}_n$ is the surjective homomorphism 
as in Section \ref{subsection_criterion}. 
We denote by $\pi_b$, the permutation $\pi(b) \in \mathfrak{S}_n$ for $b \in B_n$. 
Suppose that $b \in B_n $ is a braid with $\pi_b(i)= i$, 
i.e., the permutation $\pi_b$ fixes the index $i$. 
The closure $\mathrm{cl}(b(i))$ of the $i$-th strand $b(i)$ 
 is a component of the closure $\mathrm{cl}(b)$ of $b$.  
We consider an oriented disk $D= D_ {(b,i)}$ bounded by the longitude $\ell_i$ of 
a regular neighborhood $\mathcal{N}(\mathrm{cl}(b(i)))$ of $\mathrm{cl}(b(i))$. 
Such a disk $D$ is unique up to isotopy on $\mathcal{E}(\mathrm{cl}(b(i)))$. 
Let $b-b(i) \in B_{n-1}$ be a braid with $n-1$ strands obtained from $b$ by removing the $i$-th strand $b(i)$. 
The braid $b$ is said to be  {\it $i$-increasing} (resp. {\it $i$-decreasing}) 
if there exists a disk $D= D_{(b,i)} $ as above 
with the following conditions (D1) and (D2). 
\begin{enumerate}
\item[(D1)]
There exists at least one component $K'$ of $\mathrm{cl}(b-b(i))$ 
such that $D \cap K' \ne \emptyset$. 

\item[(D2)]
Each component of $\mathrm{cl}(b-b(i))$ and $D$ intersect with each other transversally, 
and every intersection point has the same sign $+1$ (resp. $-1$), see Figure~\ref{fig_intersection}(1)(2).  
\end{enumerate}
We call $D= D_ {(b,i)}$ the {\it associated disk} of the pair $(b,i)$. 
Then we set 
$$I(b,i)= D \cap \mathrm{cl}(b-b(i)).$$ 
By (D1) we have $I(b,i) \ne \emptyset$. 
Let $u(b,i) \ge 1$ be the cardinality $|I(b,i)|$ of  $I(b,i)$. 
We call $u(b,i)$ the {\it intersection number} of the pair $(b,i)$.

\begin{ex}
\label{ex_increasing}\ 
\begin{enumerate}
\item[(1)] 
A braid $b= \sigma_1^2 \sigma_2^{-1} \in B_3$ is  $1$-increasing with $u(b, 1)=1$. 
See Figure \ref{fig_intersection}(3). 

\item[(2)] 
 A pure braid  $b= \sigma_1^4 \sigma_2^{-2} \in B_3$  
is $1$-increasing with $u(b, 1)=2$ and $3$-decreasing with $u(b,3)=1$. 
\end{enumerate}
\end{ex}

Properties of $i$-increasing braids are given in the next two lemmas. 
The same properties hold for  $i$-decreasing braids.

\begin{lem}
\label{lem_product}
If $b$ and $b'$ are $i$-increasing braids with the same number of strands, 
then the product $b b'$ is also $i$-increasing 
such that $u(b b', i)= u(b,i)+ u(b',i)$. 
\end{lem}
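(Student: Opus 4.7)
My plan is to prove this by a direct geometric construction: stack the two braid cylinders one on top of the other to realize $bb'$, and then build an associated disk $D_{(bb',i)}$ for $(bb',i)$ by gluing the portions of $D_{(b,i)}$ and $D_{(b',i)}$ that lie inside the respective cylinders.

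First I would note that since $\pi_b(i)=\pi_{b'}(i)=i$ (this is forced by the $i$-increasing hypothesis, as the $i$-th strand must close up on itself in order for $\mathrm{cl}(b(i))$ to be a component of $\mathrm{cl}(b)$), we have $\pi_{bb'}(i)=i$, and the $i$-th strand $(bb')(i)$ is the concatenation of $b(i)$ and $b'(i)$ inside the stacked cylinder $C=C_1\cup C_2$ with $C_1=D^2\times[0,1]$ and $C_2=D^2\times[1,2]$. Next, after an isotopy of $D_{(b,i)}$ inside $S^3\setminus \mathrm{cl}(b(i))$, I would arrange that $D_{(b,i)}\cap C_1$ is a sub-disk whose boundary consists of a parallel copy $b(i)^+$ together with arcs on $D^2\times\{0\}$ and $D^2\times\{1\}$, and do the same for $D_{(b',i)}\cap C_2$. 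The key observation is that these boundary arcs on $D^2\times\{1\}$ can be chosen to match: gluing the two sub-disks along $D^2\times\{1\}$ and completing outside $C$ with the disk provided by the unknottedness of $\mathrm{cl}((bb')(i))$ yields an embedded disk $D_{(bb',i)}$ bounded by the longitude of $\mathrm{cl}((bb')(i))$.

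The intersection count is then a bookkeeping step. Every component of $\mathrm{cl}(bb'-(bb')(i))$ is contained in $C$, and its geometric intersection with $D_{(bb',i)}$ splits as intersections of the $C_1$-piece with the strands of $b$ different from $b(i)$, plus intersections of the $C_2$-piece with the strands of $b'$ different from $b'(i)$. By construction these agree with the intersection data of $D_{(b,i)}$ and $D_{(b',i)}$, so the total count is $u(b,i)+u(b',i)$. Since both $b$ and $b'$ are $i$-increasing, each intersection carries sign $+1$, and conditions (D1) and (D2) for $bb'$ follow immediately.

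The main obstacle I anticipate is a careful justification that the two disk pieces can simultaneously be isotoped so that their boundary arcs on the interface $D^2\times\{1\}$ agree, and that this isotopy can be carried out disjointly from $\mathrm{cl}(bb'-(bb')(i))$. Intuitively this is clear, because $D^2\times\{1\}$ is a disk and any two properly embedded arcs with the same endpoints are isotopic rel endpoints; but one has to check that the ambient isotopy supported in a neighborhood of $D^2\times\{1\}$ can be realized in the complement of the other strands so that no spurious intersections appear and none of the positive intersections in $C_1$ or $C_2$ is disturbed. This is essentially a standard transversality and general position argument, but it is the step that deserves the most care.
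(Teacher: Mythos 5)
Your proof is correct and is essentially the paper's own argument: stack the two braid cylinders, glue the pieces of the associated disks along the interface, and count the (all positive) intersections additively. The interface-matching issue you flag is handled in the paper by first isotoping both associated disks so that each meets its cylinder in the same radial slice $R_{\theta_0}\times[0,1]$, after which the two pieces glue automatically.
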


\begin{proof}
Let $D^2$ be the disk with radius $1$. 
For $k= 0,1,2$, 
let $(D^{2} \times [0,1])_{k}$  be cylinders in $S^{3}$ such that 
$(D^{2} \times [0,1])_{1} \cap \mathrm{cl}(b) = b$, $(D^{2} \times [0,1])_{0} \cap \mathrm{cl}(b') = b'$ 
and $(D^{2} \times [0,1])_{2} \cap \mathrm{cl}(bb') = bb'$. 
We set $R_{\theta}:= \{r e^{i\theta}\ |\ 0 \le r \le 1\} \subset D^{2}$. 
We denote by $D$ (resp. $D'$), an associated disk of the pair $(b,i)$ (resp. $(b',i)$). 
By an ambient isotopy, we may assume that 
there exists $\theta_0 \in [0, 2 \pi)$ such that 
$D \cap (D^{2} \times [0,1])_{1} = R_{\theta_0} \times [0,1]$ 
(resp. $D' \cap (D^{2} \times [0,1])_{0} = R_{\theta_0} \times [0,1]$) and 
$D \cap \mathrm{cl}(b) = R_{\theta_0} \times [0,1] \cap \mathrm{cl}(b)$ 
(resp. $D' \cap \mathrm{cl}(b') = R_{\theta_0} \times [0,1] \cap \mathrm{cl}(b')$). 
We stuck the cylinder $(D^{2} \times [0,1])_{1}$ over the cylinder $(D^{2} \times [0,1])_{0}$ so that 
$(D^{2} \times \{ 0 \})_{1}$ is attached to $(D^{2} \times \{ 1 \})_{0}$, and 
we identify the result with $D^{2} \times [0,2]$. 
Let $$F : D^{2} \times [0,2] \to (D^{2} \times [0,1])_{2}$$ 
be the homeomorphism defined by $F(x,t) = (x,t/2)$. 
Then $F( b \cup b') = b b'$ by the definition of 
the product of braids. 
The image of the union of $R_{\theta_0} \times [0,1]$ in $(D^{2} \times [0,1])_{1}$ 
and $R_{\theta_0} \times [0,1]$ in $(D^{2} \times [0,1])_{0}$ 
under the homeomorphism $F$ is 
$R_{\theta_0} \times [0,1] \subset (D^{2} \times [0,1])_{2}$, which intersects 
with the closure $\mathrm{cl}(b b')$ of the product $bb'$ positively at  $(u(b,i) + u(b',i))$ points. 
Therefore, $b b'$ is $i$-increasing and the equality $u(b b', i) = u(b,i) + u(b',i)$ holds. 
\end{proof}


\begin{lem}
\label{lem_involution-skew}
Suppose that $b \in B_n$ is an $i$-increasing braid.  
Then $\mathrm{skew}(b) \in B_n$ is an $(n-i+1)$-increasing braid 
such that $u(\mathrm{skew}(b),n-i+1)= u(b,i)$. 
\end{lem}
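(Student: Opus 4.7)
The strategy is to realize $\mathrm{skew}$ geometrically through the involution $R: D^2 \times [0,1] \to D^2 \times [0,1]$, $(re^{i\theta}, t) \mapsto (re^{i(\pi-\theta)}, 1-t)$, and to transport the associated disk $D_{(b,i)}$ across $R$ to produce an associated disk for $(\mathrm{skew}(b), n-i+1)$. The map $R$ is the composition of a reflection on $D^2$ and a time reversal, each orientation-reversing, so $R$ is orientation-preserving on $D^2 \times [0,1]$ and on the ambient $S^3$ obtained after taking closures. The reflection on $D^2$ carries $a_i$ to $a_{n-i+1}$, and the time reversal swaps the bottom and top of the cylinder. Consequently, since $\pi_b(i) = i$, the image $R(b(i))$ is a curve from $a_{n-i+1}$ on top to $a_{n-i+1}$ on bottom; re-orienting it from bottom to top (the standard braid convention) identifies it with the $(n-i+1)$-st strand of $\mathrm{skew}(b)$, and in particular $\pi_{\mathrm{skew}(b)}(n-i+1) = n-i+1$.

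First I would take $D' := R(D_{(b,i)})$ as a candidate associated disk for $(\mathrm{skew}(b), n-i+1)$, oriented so that $\partial D'$ agrees with the orientation of the longitude $\ell_{n-i+1}$ coming from $\mathrm{cl}(\mathrm{skew}(b)(n-i+1))$. Because the re-orientation of the strand reverses the direction of this link component relative to $R_*(\mathrm{cl}(b(i)))$, the orientation of $D'$ is the negative of $R_*$ applied to the orientation of $D_{(b,i)}$. Since $R$ is a bijection, it also restricts to a bijection between the intersection sets $D_{(b,i)} \cap \mathrm{cl}(b - b(i))$ and $D' \cap \mathrm{cl}(\mathrm{skew}(b) - \mathrm{skew}(b)(n-i+1))$, so these have the same cardinality $u(b,i) \geq 1$, and condition (D1) is inherited.

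The main step is to verify that $R$ preserves intersection signs. At a point $p \in D_{(b,i)} \cap K$ where $K$ is a component of $\mathrm{cl}(b-b(i))$, the sign is determined by comparing the frame (tangent to $K$, positive frame for $D_{(b,i)}$) with the orientation of $S^3$. At the corresponding point $R(p)$, both the strand tangent vector of the relevant component of $\mathrm{cl}(\mathrm{skew}(b) - \mathrm{skew}(b)(n-i+1))$ and the orientation of $D'$ are the negatives of the respective pushforwards under $R$. The two sign reversals combine to give the same orientation as the pushforward of the original frame; since $R$ is orientation-preserving on $S^3$, the sign at $R(p)$ equals the sign at $p$, namely $+1$. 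Thus every intersection in $D' \cap \mathrm{cl}(\mathrm{skew}(b) - \mathrm{skew}(b)(n-i+1))$ is positive, so condition (D2) holds, proving $\mathrm{skew}(b)$ is $(n-i+1)$-increasing with $u(\mathrm{skew}(b), n-i+1) = u(b,i)$.

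The main obstacle is the careful orientation bookkeeping. Because $R$ reverses the $t$-coordinate, the geometric correspondence between $b$ and $\mathrm{skew}(b)$ reverses strand orientations in the closed braid, which in turn forces a compensating reversal in the natural orientation of the transported disk $D'$. The essential (and somewhat subtle) point is that these two sign flips cancel in the intersection-sign formula — this is exactly what makes the conclusion symmetric ($i$-increasing maps to $(n-i+1)$-increasing rather than to $(n-i+1)$-decreasing).
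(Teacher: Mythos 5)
Your proposal is correct and is exactly the argument the paper intends: the paper's own proof is a one-line appeal to the fact that $\mathrm{skew}$ is induced by the orientation-preserving involution $R$ of the cylinder, and you have simply carried out the transport of the associated disk and the orientation bookkeeping (the two sign reversals — of the strand orientations and of the disk orientation — cancelling against each other) in full detail. Nothing is missing; your expanded sign check is consistent with the paper's examples (e.g.\ $b=\sigma_3^2\sigma_4^{-2}\in B_5$ being $3$-increasing with $\mathrm{skew}(b)=\sigma_1^{-2}\sigma_2^2$ again $3$-increasing).
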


\begin{proof}
The assertion follows from that fact that 
$\mathrm{skew}: B_n \to B_n$ is  induced by the  involution $R$ on the cylinder $D^2 \times [0,1]$ 
given in Section \ref{subsection_homogeneous}. 
\end{proof}

\begin{ex}
\label{ex_3increasing}
Let $b= \sigma_3^2 \sigma_4^{-2} $ be the $5$-braid as in Example \ref{ex_homogeneous}. 
Then $b$ is $3$-increasing with $u(b,3)=1$. 
By Lemma \ref{lem_involution-skew}, the braid 
$\mathrm{skew}(b) =\sigma_1^{-2} \sigma_2^2 $ is 
$3$-increasing with $u(\mathrm{skew}(b), 3)= 1$. 
Then by Lemma \ref{lem_product}, 
the braid $\widetilde{b}= \mathrm{skew}(b) \cdot b =\sigma_1^{-2} \sigma_2^2  \sigma_3^2 \sigma_4^{-2} $ is also $3$-increasing 
with $u(\widetilde{b}, 3)= u(\mathrm{skew}(b), 3)+ u(b,3)= 2$. 
\end{ex}

Recall that $T_b= \mathcal{E}(\mathrm{br}(b))$ is the exterior of the braided link $\mathrm{br}(b)$ 
and the surface $F_b$ is a genus $0$ fiber of the fibration $T_b \rightarrow S^1$. 
See Section \ref{subsection_closures}. 
We shall define the $2$-dimensional subcone 
$C_{(b,i)}$ of $H_2(T_b, \partial T_b; {\Bbb R})$ for an $i$-increasing braid $b$. 
To do this, 
we first consider the braided link $\mathrm{br}(b)= \mathrm{cl}(b) \cup A$. 
The associated disk $D = D_{(b,i)}$ has a unique point of intersection with  $A$, 
and the cardinality of $I(b,i) \cup (D \cap A) $ is  $u(b,i)+1$. 
To deal with $\mathrm{br}(b)= \mathrm{cl}(b) \cup A$ as an oriented link, 
we consider an orientation of $\mathrm{cl}(b)$ as we described in Section \ref{subsection_closures}, and 
assign an orientation of the braid axis $A $ of $b$  
so that the sign of the intersection between  $D$ and $A$ is $+1$ as in Figure \ref{fig_intersection}(1).  
See Figure~\ref{fig_closure}(2)  
for the orientation of $A$ of the $3$-braid $\sigma_1^2 \sigma_2^{-1}$ that is $1$-increasing.

Next, we define an oriented surface $E_{(b,i)}$ of genus $0$ embedded in $T_b$. 
Consider small $u(b,i)+1$ disks in the associated disk $D=D_{(b,i)}$ whose centers are points of $I(b,i) \cup (D \cap A) $. 
Then  $E_{(b,i)} $ is a surface of genus $0$ with $u(b,i)+2$ boundary components 
obtained from $D$ by removing the interiors of those small disks. 
We choose the orientation of $E_{(b,i)}$ so that it agrees with the orientation of $D$. 
See Figure \ref{fig_intersection}(4).

Lastly, we define the $2$-dimensional subcone 
$C_{(b,i)}$ of $H_2(T_b, \partial T_b; {\Bbb R})$ spanned by 
the two integral classes $[F_b]$ and $[E_{(b,i)}]$ as follows. 
\begin{equation}
\label{equation_subcone}
C_{(b,i)} = \{x[F_b]+ y[E_{(b,i)}]\ |\ x>0, \ y > 0\} .
\end{equation}
We write $(x,y)= x[F_b]+ y[E_{(b,i)}] \in C_{(b,i)}$. 
The Thurston norm of $(x,y)$ is denoted by $\| (x,y)\|$. 

\begin{thm}[\cite{HiroseKin201}]
\label{thm_disktwist} 
Let $b$ be an $i$-increasing braid. 
Suppose that $b$ is  pseudo-Anosov. 
Let $\mathscr{C}$ be the fibered cone of the  $3$-manifold $T_b$ containing 
$[F_b] = (1,0) \in C_{(b,i)}$.  
Then we have the following. 
\begin{enumerate}
\item[(1)] 
$C_{(b,i)} \subset \mathscr{C}$. 

\item[(2)] 
The fiber $F_{(x,y)}$ for each primitive integral class $(x,y) \in C_{(b,i)}$ has genus $0$. 

\item[(3)] 
Let $\phi_{(x,y)}: F_{(x,y)} \rightarrow F_{(x,y)}$  
denote the monodromy of the fibration $T_b \rightarrow S^1$ 
associated with a primitive integral class $(x,y) \in C_{(b,i)} $. 
Then there exists a $j$-increasing braid $b_{(x,y)} \in B_{ \|(x,y)\|+1}$  for some index $j= j_{(x,y)}$ 
which gives the monodromy $\phi_{(x,y)}: F_{(x,y)} \rightarrow F_{(x,y)}$. 
\end{enumerate}
\end{thm}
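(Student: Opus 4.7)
The plan is to prove the theorem by first establishing that the class $[E_{(b,i)}]$ lies in the closure $\overline{\mathscr{C}}$ of the fibered cone $\mathscr{C}$ containing $[F_b]$. Once this is shown, the openness of fibered cones together with convexity imply that the open two-dimensional cone $C_{(b,i)}$ spanned by the interior direction $[F_b] \in \mathscr{C}$ and the boundary-or-interior direction $[E_{(b,i)}] \in \overline{\mathscr{C}}$ is entirely contained in $\mathscr{C}$, giving part (1). Parts (2) and (3) then follow by analyzing the fibers of the fibrations dual to the primitive integral classes in $C_{(b,i)}$ and their monodromies, as furnished by Thurston's and Fried's theorems.

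The central step is to realize $E_{(b,i)}$ as (homologous to) the fiber of a genuine fibration of $T_b$. Here the $i$-increasing hypothesis is essential: all intersections of the associated disk $D_{(b,i)}$ with the components of $\mathrm{cl}(b-b(i))$ and with $A$ have the same sign, which makes $E_{(b,i)}$ a taut surface with coherently oriented boundary. I would verify fiberedness by either of two routes. The first is an explicit Murasugi sum (plumbing) construction: splice the standard fibration $T_b \to S^1$ with fiber $F_b$ together with a standard Hopf-band annulus fibration along a meridional disk for $\mathrm{cl}(b(i))$, and check that the resulting fibered surface is isotopic to $E_{(b,i)}$, with the sign consistency guaranteeing that the plumbing produces a fibration rather than merely a taut foliation. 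The second is a sutured manifold decomposition in the spirit of Gabai: decomposing $T_b$ along $E_{(b,i)}$ yields a sutured manifold whose tautness is guaranteed by the uniform sign of intersections, and a finite hierarchy terminates in a product, which characterizes $[E_{(b,i)}]$ as lying on a fibered face.

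For part (2), use the linearity of the Thurston norm on $\mathscr{C}$ to write $\|(x,y)\| = x \|F_b\| + y \|E_{(b,i)}\|$ for $(x,y) \in C_{(b,i)}$. The primitive integral fiber $F_{(x,y)}$ admits a concrete description as the oriented cut-and-paste sum of $x$ parallel copies of $F_b$ with $y$ parallel copies of $E_{(b,i)}$; since both constituents have genus $0$ and the oriented sum (resolving all intersections consistently with the surface orientations) preserves planarity, $F_{(x,y)}$ is planar. For part (3), note that $F_{(x,y)}$ is a planar surface with $\|(x,y)\| + 2$ boundary components; singling out one boundary component to play the role of a new braid axis $A_{(x,y)}$ presents $T_b$ as the exterior of a braided link with fiber $F_{(x,y)}$, and the monodromy $\phi_{(x,y)}$ lifts to a braid $b_{(x,y)} \in B_{\|(x,y)\|+1}$. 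The $j$-increasing property is then transferred from $b$: tracking a designated strand descended from the $i$-th strand of $b$ (or from one of the meridional arcs of $E_{(b,i)}$ introduced by the sum), the induced associated disk inherits intersections of uniform sign from those of $D_{(b,i)}$, which identifies the correct index $j = j_{(x,y)}$.

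The hard part will be establishing that $[E_{(b,i)}]$ is fibered rather than merely a norm-minimizing class: the $i$-increasing condition must be used decisively, as otherwise a boundary class of the norm ball can fail to lie on a fibered face. Pinning down the precise index $j_{(x,y)}$ and verifying the $j$-increasing structure of $b_{(x,y)}$ also requires careful bookkeeping of the parallel copies and their oriented resolutions, depending on the arithmetic of the coordinates $(x,y)$.
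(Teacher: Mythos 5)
The statement is quoted from \cite{HiroseKin201} and the present paper does not reprove it, but the machinery underlying the proof is visible in Section \ref{section_braids-increasing-middle}: the argument is constructive, via disk twists. The $p$-th power $t_D^{p}$ of the disk twist about the associated disk $D=D_{(b,i)}$ is a homeomorphism of $\mathcal{E}(\mathrm{cl}(b(i)))$ carrying $\mathrm{br}(b)$ to the braided link $\mathrm{br}(b_p)$ of an increasing braid with $n+pu$ strands, so $T_{b_p}\simeq T_b$; pulling back the evident genus-$0$ fibration of $T_{b_p}$ (and likewise for $t_{D_A}^{k}$, which amounts to multiplying by $\Delta^{2k}$) realizes every primitive integral class of $C_{(b,i)}$ as a fibered class whose fiber is an explicit holed disk and whose monodromy is an explicit increasing braid. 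This proves (1), (2) and (3) simultaneously, with no appeal to sutured manifold theory or plumbing. Your proposal takes a genuinely different route, and it has a gap at exactly the point you flag as ``the hard part'': you never establish that $[E_{(b,i)}]$ lies in $\overline{\mathscr{C}}$. The Gabai branch of your argument only shows that $E_{(b,i)}$ is taut, and tautness does not imply membership in the closure of a fibered cone, so that branch cannot close the gap. The Murasugi-sum branch is where the content would have to live, but as written it is an assertion, not an argument: you do not explain why plumbing a Hopf band onto $F_b$ along a meridional disk of $\mathrm{cl}(b(i))$ yields a surface isotopic to $E_{(b,i)}$ (a planar surface with $u(b,i)+2$ boundary components distributed over several boundary tori of $T_b$), nor where the $i$-increasing hypothesis enters that construction.

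A second, independent error occurs in your part (2): the oriented cut-and-paste sum of planar surfaces need not be planar. Euler characteristic is additive under the oriented double-curve sum, but genus is not; for instance, the oriented sum of two annuli meeting in two essential circles can be a torus. To conclude genus $0$ for $F_{(x,y)}$ you would have to count its boundary components on each cusp torus from the homology class and combine that with $\chi(F_{(x,y)})=x\chi(F_b)+y\chi(E_{(b,i)})$, or else exhibit $F_{(x,y)}$ directly as a holed disk, which is what the disk-twist homeomorphism does. Your part (3) has the same character: declaring one boundary component of $F_{(x,y)}$ to be ``a new braid axis'' presupposes that this component is a longitude of an unknot in $S^3$ whose complementary solid torus contains the remaining components as a closed braid transverse to the disk fibers; that is precisely the content supplied by the disk-twist construction (the new axis being the old axis $A$ or the old strand $\mathrm{cl}(b(i))$), and your sketch assumes it rather than proving it.
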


The proof of Theorem \ref{thm_disktwist}(1)(2) 
can be found in  Theorem 3.2(1)(2)  in \cite{HiroseKin201}. 
The statement of Theorem \ref{thm_disktwist}(3) 
follows from the argument in the proof of Theorem 3.2(3) in \cite{HiroseKin201}.

\section{Braids increasing in the middle}
\label{section_braids-increasing-middle}

Let $b$ be a braid with $2n+1$ strands. 
Then the notion `$i$-increasing braid' makes sense for $i= 1, \dots, 2n+1$. 
(See Section \ref{subsection_i-increasing}.) 
In this section, we restrict our attention to the case $i= n+1$: 
Suppose that 
$b$ is an $(n+1)$-increasing braid.  
In this case we say that $b$ is  {\it increasing in the middle}.  
We write 
$C_b:= C_{(b, n+1)}$ for the subcone of $H_2(T_b, \partial T_b; {\Bbb R})$. 
(See (\ref{equation_subcone}) for the definition of the subcone.)  
Then $b^{\bullet} \in B_{2n}$ denotes the braid 
 obtained from $b \in B_{2n+1}$ 
by removing the strand of the middle index $n+1$.

\begin{ex}(cf. Example \ref{ex_3increasing}) 
\label{ex_product}
Suppose that $b \in B_ {2n+1}$ is a braid increasing in the middle. 
By Lemma \ref{lem_involution-skew}, the braid $\mathrm{skew}(b)$ is increasing in the middle. 
By Lemma \ref{lem_product} the braid $ \widetilde{b}= \mathrm{skew}(b) \cdot b$ 
is also increasing in the middle with the intersection number 
$$u(\widetilde{b},n+1)= u(\mathrm{skew}(b), n+1)+ u(b,n+1)= 2u(b,n+1). $$
The skew-palindromization $\widetilde{(b^{\bullet})} $ of $b^{\bullet}$ 
satisfies 
$$\widetilde{(b^{\bullet})} = \mathrm{skew}(b^{\bullet}) b^{\bullet}=  (\widetilde{b})^{\bullet},$$ 
i.e., $\widetilde{(b^{\bullet})} \in B_{2n}$ is obtained from the skew-palindromization $\widetilde{b} $ of $b $  
by removing the strand of the middle index $n+1$.    
Hereafter we simply denote the braid $\widetilde{(b^{\bullet})} $ 
by $\widetilde{b^{\bullet}}$. 
Applying Theorem \ref{thm_2-fold} to the circular plat closure $L= C(b^{\bullet})$ of $b^{\bullet} \in B_{2n}$, 
we have $\mathfrak{t}(\widetilde{b^{\bullet}}) \in \mathcal{D}_{n-1}(M_{C(b^{\bullet})})$. 
\end{ex}

\begin{ex}
\label{ex_halftwist}
The half twist $\Delta = \Delta_{2n+1} \in B_{2n+1}$ is a braid increasing in the middle 
with  $u(\Delta,n+1)=n$. 
By Lemma \ref{lem_product} 
 the positive power $\Delta^p$ for $p \ge 1$ is a braid increasing in the middle, and 
it holds 
$$u(\Delta^p,n+1)= p \cdot u(\Delta,n+1)=  pn.$$
The braid $\Delta^{\bullet} \in B_{2n}$ satisfies 
$\Delta^{\bullet}= \Delta_{2n} \in B_{2n}$, 
i.e.,  $\Delta^{\bullet}$ is equal to the half twist $\Delta_ {2n}$ with $2n$ strands. 
 \end{ex}

\begin{center}
\begin{figure}
\includegraphics[height=5cm]{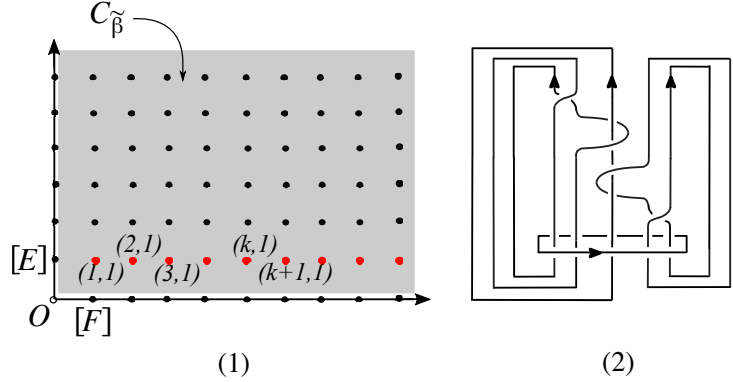} 
\caption{
 (1) The subcone $ C_{\widetilde{\beta}}= C_ {(\widetilde{\beta}, n+1)}$ spanned by $[F]$ and $[E]$, 
 where $F:= F_{\widetilde{\beta}}$ and $E:= E_{(\widetilde{\beta},n+1)}$. 
 (Primitive integral classes $(1,1), (2,1), \dots, (k,1), \dots$ are indicated.) 
 (2) The braided link $\mathrm{br}(\widetilde{\beta})$ of 
 $\widetilde{\beta}= \sigma_1 \sigma_2^2 \sigma_3^2 \sigma_4 \in B_5$.} 
\label{fig_subcone_ver2}
\end{figure}
\end{center}

Given a braid $\beta \in B_{2n+1}$ increasing in the middle, 
we suppose that the skew-palindromization $\widetilde{\beta}$ of $\beta$ is pseudo-Anosov. 
Consider the subcone $C_{\widetilde{\beta}} = C_{(\widetilde{\beta}, n+1)} $ of 
$H_2(T_{\widetilde{\beta}}, \partial T_{\widetilde{\beta}}; {\Bbb R})$ 
for the hyperbolic $3$-manifold $T_{\widetilde{\beta}}$ (Figure \ref{fig_subcone_ver2}(1)). 
We now apply Theorem \ref{thm_disktwist} for the  skew-palindromization $\widetilde{\beta}$. 
For the class $(1,0)= [F_{\widetilde{\beta}}] \in C_{\widetilde{\beta}} $, 
the monodromy $\phi_{(1,0)}: F_{(1,0)} \rightarrow F_{(1,0)}$ 
defined on the fiber $F_{(1,0)} = F_{\widetilde{\beta}}$ 
is given by the braid $\widetilde{\beta}$ 
that is increasing in the middle. 
The following result (Theorem \ref{thm_fibered-cone}) tells us that 
this property is inherited for all primitive classes $(x, y) \in C_{\widetilde{\beta}} \subset \mathscr{C}$, 
where $\mathscr{C}$ is the fibered cone of  $T_{\widetilde{\beta}}$ containing 
$ [F_{\widetilde{\beta}}] \in  C_{\widetilde{\beta}} $.

\begin{ex}
\label{ex_demonstration}
Consider the braid $\beta = \sigma_3^2 \sigma_4 \in B_5$. 
The braid $\beta$ is increasing in the middle. 
The skew-palindromization $\widetilde{\beta} = \sigma_1 \sigma_2^2 \sigma_3^2 \sigma_4 \in B_5$  is pseudo-Anosov, 
see the proof of Step 1 in \cite[Proof of Theorem D]{HiroseKin201}. 
See Figure \ref{fig_subcone_ver2}(2) for the braided link. 
\end{ex}

\begin{thm}
\label{thm_fibered-cone}
Let $\beta \in B_{2n+1}$ be a braid increasing in the middle. 
Suppose that the skew-palindromization $\widetilde{\beta}$ of $\beta$ is pseudo-Anosov. 
Let $(x,y) \in C_{\widetilde{\beta}} $ be a primitive integral class. 
Then we have the following. 
\begin{enumerate}
\item[(1)]
There exists a braid $\alpha_{(x,y)} \in B_{\|(x,y)\|+1}$ increasing in the middle 
such that 
the monodromy $\phi_{(x,y)}: F_{(x,y)} \rightarrow F_{(x,y)}$ 
of the fibration $T_{\widetilde{\beta}} \rightarrow S^1$ 
associated with $(x,y)$ 
is given by the skew-palindromization $\widetilde{\alpha_{(x,y)}}$ of $\alpha_{(x,y)}$. 
(In particular $\widetilde{\alpha_{(x,y)}}$ is a pseudo-Anosov braid.)

\item[(2)] 
Let $\alpha_{(x,y)}^{\bullet} \in B_{\|(x,y)\|}$ be the braid obtained from $\alpha_{(x,y)}$ 
by removing the strand of the middle index. 
Then $C(\beta^{\bullet})= C(\alpha_{(x,y)}^{\bullet})$, and hence 
$\mathfrak{t}(\widetilde{\alpha_{(x,y)}^{\bullet}}) \in \mathcal{D}_{\frac{\|(x,y)\|}{2}-1}(M_{C(\beta^{\bullet})})$. 

\end{enumerate}
\end{thm}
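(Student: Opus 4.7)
The plan is to apply Theorem \ref{thm_disktwist} to the pseudo-Anosov braid $\widetilde{\beta}$, which is increasing in the middle by Example \ref{ex_product}, and then promote its output by exploiting the symmetry built into $\widetilde{\beta}$ by skew-palindromicity. As a preliminary, I would verify that $\|(x,y)\|$ is even for every primitive integral class $(x,y)\in C_{\widetilde{\beta}}$. Linearity of the Thurston norm on the fibered cone (Theorem \ref{thm_norm}(2)) together with Theorem \ref{thm_disktwist}(2) yields
\begin{equation*}
\|(x,y)\| \;=\; -x\,\chi(F_{\widetilde{\beta}}) - y\,\chi(E_{\widetilde{\beta}}) \;=\; 2n\, x + u(\widetilde{\beta},n+1)\, y \;=\; 2n\, x + 2u(\beta,n+1)\, y,
\end{equation*}
using $u(\widetilde{\beta},n+1)=2u(\beta,n+1)$ from Example \ref{ex_product}. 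Hence $\|(x,y)\|+1$ is odd and the middle index $\tfrac{\|(x,y)\|}{2}+1$ of a braid in $B_{\|(x,y)\|+1}$ is well defined.

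For part (1), Theorem \ref{thm_disktwist}(3) already produces some $j$-increasing braid in $B_{\|(x,y)\|+1}$ representing the monodromy $\phi_{(x,y)}$. The real task is to promote this to a skew-palindromic braid $\widetilde{\alpha_{(x,y)}}$ with $\alpha_{(x,y)}$ increasing in the middle. The key ingredient is the involution $R$ from Section \ref{subsection_homogeneous}: since $\widetilde{\beta}$ is skew-palindromic, $R$ descends to an orientation-preserving involution $\rho$ on $T_{\widetilde{\beta}}$ that setwise preserves the fiber $F_{\widetilde{\beta}}$ and the transverse surface $E_{\widetilde{\beta}}$ (the associated disk $D_{(\widetilde{\beta}, n+1)}$ can be chosen $R$-invariant because the middle strand is pointwise fixed by $R$). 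Consequently $\rho_*$ preserves $C_{\widetilde{\beta}}$, so for each primitive $(x,y)\in C_{\widetilde{\beta}}$ the fiber $F_{(x,y)}$ inherits an involution commuting with $\phi_{(x,y)}$ up to isotopy. Running the construction of $b_{(x,y)}$ from the proof of Theorem 3.2(3) of \cite{HiroseKin201} $\rho$-equivariantly—by choosing the auxiliary braid axis inside $T_{\widetilde{\beta}}$ to be $\rho$-invariant—forces the resulting braid to be skew-palindromic with increasing index at the middle. Writing this braid as $\widetilde{\alpha_{(x,y)}}$ yields the required $\alpha_{(x,y)}$.

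For part (2), the equality $C(\beta^{\bullet}) = C(\alpha_{(x,y)}^{\bullet})$ is obtained by passing to the quotient by $\rho$. The pair $(S^3, \mathrm{br}(\widetilde{\beta}))/\rho$ canonically recovers the circular plat closure $C(\beta^{\bullet})$ through the identification used in Section \ref{subsection_closures} and Theorem \ref{thm_2-fold}, while the very same quotient read through the $(x,y)$-fibration together with its $\rho$-equivariant braid axis presents the resulting link as $C(\alpha_{(x,y)}^{\bullet})$. Thus both closures give the same link in $S^3$. The conclusion $\mathfrak{t}(\widetilde{\alpha_{(x,y)}^{\bullet}}) \in \mathcal{D}_{\tfrac{\|(x,y)\|}{2}-1}(M_{C(\beta^{\bullet})})$ then follows at once by applying Theorem \ref{thm_2-fold} to $\alpha_{(x,y)}^{\bullet} \in B_{\|(x,y)\|}$.

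The main obstacle is carrying out the $\rho$-equivariant choice in the proof of Theorem \ref{thm_disktwist}(3): one must show that the knot transverse to $F_{(x,y)}$ used to present $\phi_{(x,y)}$ as a braid can be selected symmetrically with respect to $\rho$, so that the resulting braid carries a manifest skew-palindromic symmetry and the quotient description used in part (2) returns the very same ambient link $C(\beta^{\bullet}) \subset S^3$ we started with. Once this equivariant braid-axis choice is established, both claims follow by routine tracking through the $2$-fold branched cover structure and the linearity of the Thurston norm on $C_{\widetilde{\beta}}$.
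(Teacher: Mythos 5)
Your strategy is the same as the paper's: exploit the symmetry of the skew\nobreakdash-palindromic braid $\widetilde{\beta}$ under the involution $R$ to run the disk-twist construction of Theorem~\ref{thm_disktwist}(3) equivariantly, so that every monodromy in the subcone is again presented by a skew-palindromic, middle-increasing braid, and then obtain $C(\beta^{\bullet})=C(\alpha_{(x,y)}^{\bullet})$ by passing to the $R$-quotient. Your preliminary norm computation $\|(x,y)\|=2nx+2u(\beta,n+1)y$ is consistent with the paper (it matches the genus count in the proof of Proposition~\ref{prop_finite}), and your identification of the two ingredients --- equivariance of the disk twist, and the quotient description of the circular plat closure --- is exactly right.

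However, the step you yourself flag as ``the main obstacle'' is not a routine verification to be deferred: it is the entire technical content of the theorem, and the paper devotes Lemmas~\ref{lem_disktwist-skew} and~\ref{lem_disktwist-skew2} to it. Concretely, the paper proves equivariance not by an abstract ``choose the braid axis $\rho$-invariantly'' argument but by an explicit normal form: $\mathrm{cl}(\widetilde{\beta}^{\bullet})$ is isotoped ($R$-equivariantly) onto the torus $\{\tfrac12\}\times S^1\times S^1$, where the disk twist $t_D$ about the middle strand induces a Dehn twist about the circle $c=\{-\tfrac{\pi}{2}\}\times[0,1]$ that visibly commutes with $R|_{S^1\times S^1}$ (a $\pi$-rotation); reading off the twisted diagram as a closed braid then exhibits the representative as $\widetilde{\alpha(p)}$ for a concrete middle-increasing $\alpha(p)$. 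The second type of disk twist, about the braid axis $D_A$, is handled separately and yields $\widetilde{\beta\Delta^k}=\Delta^k\widetilde{\beta}\Delta^k$ (using $\mathrm{skew}(\Delta)=\Delta$ and the conjugacy $\Delta^k\widetilde{\beta}\Delta^k\sim\widetilde{\beta}\Delta^{2k}$), and one must alternate the two twists to reach all primitive classes of $C_{\widetilde{\beta}}$ --- a point your sketch does not address. Likewise, the equality $C(\beta^{\bullet})=C(\alpha^{\bullet})$ is not obtained in the paper by a direct quotient argument but by verifying the hypotheses of \cite[Lemma~4]{HiroseKin202} (braids on the annulus differing by a power of the Dehn twist $t_c$ have isotopic circular plat closures). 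So your proposal is a correct outline of the paper's route, but as written it asserts rather than establishes its crux; to complete it you would need to supply the content of Lemmas~\ref{lem_disktwist-skew} and~\ref{lem_disktwist-skew2} (or an equivalent equivariant construction carried out in detail).
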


For the proof of  Theorem \ref{thm_fibered-cone}, 
we need some preparations from \cite{HiroseKin201}. 
Let $L$ be a link in $S^3$. 
Suppose that an unknot  $K$ is a component of $L$. 
Then the exterior $\mathcal{E}(K)$ is a solid torus 
(resp. the boundary of the exterior $\partial \mathcal{E}(K)$ is a torus). 
We take a disk $D$ bounded by the longitude of a tubular neighborhood  $ \mathcal{N}(K)$ of $K$. 
We define a mapping class $t_D$ defined on $\mathcal{E}(K)$ 
as follows. 
We cut $ \mathcal{E}(K)$ along $D$. 
 We have  resulting two sides obtained from $D$, and reglue two sides by twisting $360$ degrees 
so that the mapping class defined on the torus $\partial  \mathcal{E}(K)$ 
is the right-handed Dehn twist about $\partial D$. 
We call such a mapping class $t_D$ on $ \mathcal{E}(K)$  the {\it disk twist about} $D$. 
For simplicity, we also call a representative of the mapping class $t_D$ 
the  {\it disk twist about}  $D$, 
and denote it by the same notation 
$$t_D:  \mathcal{E}(K) \rightarrow  \mathcal{E}(K) .$$

For any integer $\ell $, consider the homeomorphism 
$$ t_D^{\ell}:  \mathcal{E}(K) \rightarrow   \mathcal{E}(K).$$
Observe that $ t_D^{\ell}$  converts the link $L $ into a link 
 $K \cup t_D^{\ell}(L- K)$ 
 so that $S^3 \setminus L$ is homeomorphic to $S^3 \setminus (K \cup t_D^{\ell}(L- K))$. 
 Then $ t_D^{\ell}$ induces a homeomorphism $h_{D,\ell}$ 
 between the exteriors of links 
 $L$ and $K \cup t_D^{\ell}(L-K)$: 
 $$h_{D,\ell}: \mathcal{E}(L) \rightarrow \mathcal{E}(K \cup t_D^{\ell}(L-K)).$$

Consider the braided link $L= \mathrm{br}(b)= A \cup \mathrm{cl}(b)$ for a braid $b$ with the braid axis $A$. 
We consider the 
$k$-th power of the disk twist  about  the disk $D_A$ 
bounded by the longitude of $\mathcal{N}(A)$: 
$$t_{D_A}^{k}: \mathcal{E}(A) \rightarrow  \mathcal{E}(A).$$ 
Note that 
$A \cup t_{D_A}^k(\mathrm{cl}(b)) = A \cup \mathrm{cl}(b \Delta^{2k}) = \mathrm{br}(b \Delta^{2k})$. 
Hence $h_{D_A, k}$ sends 
$\mathcal{E}(\mathrm{br}(b))= \mathcal{E}(A \cup \mathrm{cl}(b))$ to 
$\mathcal{E}(\mathrm{br}(b \Delta^{2k})) = \mathcal{E}(A \cup \mathrm{cl}(b \Delta^{2k}))$.

Following \cite[Section 4.1]{HiroseKin201}, we next introduce a sequence of braided links $\{\mathrm{br}(b_p)\}_{p=1}^{\infty}$ 
obtained from an  $i$-increasing braid $b \in B_n$ 
such that  $T_{b_p} \simeq T_b$  
i.e., the mapping tori $T_{b_p}$ and $T_b$ are homeomorphic to each other for each $p \ge 1$. 
We set $u= u(b,i)$ 
that is the intersection number of the pair $(b,i)$. 
Let $D$ be an associated disk of the pair $(b,i)$. 
We take a disk twist 
$$t_D: \mathcal{E}(\mathrm{cl}(b(i))) \rightarrow \mathcal{E}(\mathrm{cl}(b(i)))$$  
so that the point of intersection $D \cap A$ becomes the center of the twisting about $D$, i.e.,  
$t_D(D \cap A) = D \cap A$. 
It follows that 
$$t_D(\mathrm{br}(b - b(i))) \cup \mathrm{cl}(b(i))$$ 
is a braided link 
of a $j$-increasing braid for some index $j$ with $(n+u)$ strands. 
(cf. Figures 11 and 12 in \cite{HiroseKin201}.) 
We define $b_1 $ to be such a braid with $(n+u)$ strands. 
The trivial knot $t_D(A)(=A)$ becomes a braid axis of $b_1$. 
By definition of the disk twist, we have $T_{b_1} \simeq T_b$. 
We remark that there is some ambiguity in defining  $b_1$. 
However the braid $b_1$ is well defined up to conjugate, see \cite[Section 4.1]{HiroseKin201}. 
The conjugacy class of $b_1$ is denoted by $\langle b_1 \rangle$.

To define the braid $b_p$ obtained from the above $b$ for $p  \ge 1$, we consider the $p$-th power 
$$t_D^{ p}: \mathcal{E}(\mathrm{cl}(b(i))) \rightarrow \mathcal{E}(\mathrm{cl}(b(i)))$$
using the above disk twist $t_D$. 
As in the case of $p=1$, 
$$t_D^{p}(\mathrm{br}(b - b(i))) \cup \mathrm{cl}(b(i))$$ 
is a braided link of an increasing braid for some index with $(n+pu)$ strands. 
We define $b_p \in B_{n+pu}$ to be such a braid with $n+pu$ strands.  
Then  $T_{b_p} \simeq T_b$. 
As in the case of $p=1$, the braid $b_p$ is well defined up to conjugate. 
We denote by $\langle b_p \rangle$, the conjugacy class of such a  braid $b_p$.   
We say that $\langle b_p \rangle$ (or a representative $b_p$) is {\it obtained from $b$ by the disk twist $t_D$} {\it ($p$ times)}.

Now we suppose that a braid $b$ is of the form $b= \widetilde{\beta}$, 
where $\beta \in B_{ 2n+1}$ is a braid increasing in the middle. 
Then $\widetilde{\beta}$ is also increasing in the middle. 
The following lemma describes a property of a representative 
of the conjugacy class $\langle (\widetilde{\beta})_p \rangle$ 
obtained from $\widetilde{\beta}$ by the disk twist $p$ times.

\begin{lem}
\label{lem_disktwist-skew}
Let $\beta \in B_{2n+1}$ be a braid increasing in the middle with the intersection number 
$u= u(\beta, n+1)$. 
We consider the skew-palindromization $\widetilde{\beta}$ (that is  increasing in the middle).   
Let $\langle (\widetilde{\beta})_p \rangle$ be the conjugacy class of a braid 
 obtained from $\widetilde{\beta}$  by the disk twist $p$ times for $p \ge 1$. 
We have the following. 
\begin{enumerate}
\item[(1)] 
There exists a braid $\alpha= \alpha(p) \in B_{2n+ 2pu+1}$ increasing in the middle 
such that  
$\widetilde{\alpha}=  \widetilde{\alpha(p)} \in \langle (\widetilde{\beta})_p \rangle $, 
i.e.,  
$\widetilde{\alpha}=  \widetilde{\alpha(p)}$ represents the conjugacy class $ \langle (\widetilde{\beta})_p \rangle$.

\item[(2)] 
$C(\beta^{\bullet})= C(\alpha^{\bullet})$, 
where $\alpha^{\bullet}= \alpha(p)^{\bullet} \in B_{2n+ 2pu}$.

\item[(3)] 
$\mathfrak{t}(\widetilde{\alpha^{\bullet}})  \in \mathcal{D}_g(M_{C(\beta^{\bullet})})$, 
where $g= n+pu-1$. 
\end{enumerate}
\end{lem}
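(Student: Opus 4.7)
The argument proceeds by establishing parts (1), (2), (3) in order, with (2) depending on (1) and (3) following from (2) via Theorem \ref{thm_2-fold}. For part (1), the idea is to exploit the geometric $R$-invariance of $\widetilde{\beta}$ in the cylinder $D^2 \times [0,1]$ (which is precisely what skew-palindromicity means). Because $\pi_{\widetilde{\beta}}(n+1) = n+1$, the middle strand $\widetilde{\beta}(n+1)$ is setwise preserved by $R$, and one may choose the associated disk $D = D_{(\widetilde{\beta}, n+1)}$ to be $R$-invariant. The $R$-symmetry of the pair (braid, disk) then forces the disk-twisted braid $(\widetilde{\beta})_p$ to carry an $R$-symmetric structure at the link level in $S^3$, so that after a conjugation normalizing the strand positions on the mid-level slice $D^2 \times \{1/2\}$, a representative of $\langle (\widetilde{\beta})_p \rangle$ may be split in half and written in the form $\mathrm{skew}(\alpha) \cdot \alpha = \widetilde{\alpha}$. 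The braid $\alpha$ thus obtained inherits the ``increasing in the middle'' property from $\beta$, because the $u$ positive intersection points of $\beta$ with $D_{(\beta, n+1)}$ and the $pu$ new intersection points from each copy of the disk twist combine into a positive $R$-symmetric picture.

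For part (2), the plan is to pass to the $2$-fold branched cover $q_W : S^2 \times S^1 \to S^3$ branched over $W$. The proof sketch of Theorem \ref{thm_2-fold} gives $q_W^{-1}(C(b)) = \mathrm{cl}(\widetilde{b})$ for any braid $b$ with an even number of strands, and combined with Example \ref{ex_product} (which gives $\widetilde{\beta^\bullet} = (\widetilde{\beta})^\bullet$ and analogously for $\alpha$), this reduces the claim $C(\beta^\bullet) = C(\alpha^\bullet)$ to the equality $\mathrm{cl}((\widetilde{\beta})^\bullet) = \mathrm{cl}((\widetilde{\alpha})^\bullet)$ as deck-invariant links in $S^2 \times S^1$. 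The argument then splits into two pieces. First, using (1), the conjugacy $\widetilde{\alpha} = h \, (\widetilde{\beta})_p \, h^{-1}$ may be arranged so that $h$ preserves the middle strand; removing the middle strand from both sides yields a conjugacy of $(\widetilde{\alpha})^\bullet$ with $((\widetilde{\beta})_p)^\bullet$ in $B_{2n+2pu}$, and hence coincidence of their closures. Second, the disk twist $t_D^p$ is a longitude-type Dehn twist on the boundary of the exterior of the (unknotted) middle strand, which extends to an ambient isotopy of $S^3$ isotopic to the identity (since the longitude twist is trivial in the mapping class group of a solid torus). This isotopy moves $\mathrm{cl}((\widetilde{\beta})^\bullet)$ to $\mathrm{cl}(((\widetilde{\beta})_p)^\bullet)$, and the $R$-invariance of $D$ from (1) is what lets this isotopy lift compatibly to $S^2 \times S^1$.

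Part (3) follows immediately by applying Theorem \ref{thm_2-fold} to $\alpha^\bullet \in B_{2n+2pu} = B_{2g+2}$ with $g = n+pu-1$, which yields $\mathfrak{t}(\widetilde{\alpha^\bullet}) \in \mathcal{D}_g(M_{C(\alpha^\bullet)})$; by (2), $M_{C(\alpha^\bullet)} = M_{C(\beta^\bullet)}$, giving the claim. The main obstacle is expected to be the second piece of part (2): verifying rigorously that the ambient isotopy realizing $t_D^p$ on $S^3$ lifts to an isotopy of $S^2 \times S^1$ preserving the covering involution of $q_W$. The $R$-invariance of $D$ coming from (1) is what makes this work in principle, but a careful geometric check is required to confirm that the extension of $t_D^p$ to the interior of $\mathcal{N}(\mathrm{cl}(\widetilde{\beta}(n+1)))$ can be chosen simultaneously compatible with both $R$ and the deck transformation of $q_W$, so that the induced modification on $C(\beta^\bullet) \subset S^3$ is a trivial ambient isotopy.
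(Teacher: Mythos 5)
Your part (1) follows the paper's strategy in outline: exploit the $R$-symmetry of $\widetilde{\beta}$, choose the associated disk and the disk twist to be $R$-equivariant, and split the resulting $R$-symmetric braid in half as $\mathrm{skew}(\alpha)\cdot\alpha$. The paper makes this precise by projecting $\mathrm{cl}(\widetilde{\beta^{\bullet}})$ onto an $R$-invariant torus on which $t_D$ induces a Dehn twist about $c=\{-\tfrac{\pi}{2}\}\times[0,1]$ commuting with $R$, but your outline is the same idea. Part (3) is derived from (2) and Theorem \ref{thm_2-fold} exactly as in the paper.

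Part (2), however, has a genuine gap, and it sits exactly where you flag ``the main obstacle.'' The paper proves (2) by a direct appeal to Lemma 4 of \cite{HiroseKin202}, which is a statement specifically about circular plat closures of annular braids under powers of the Dehn twist $t_c$; it is the circular-plat cap structure, not the ordinary closure, that is preserved. Your substitute argument rests on the claim that $t_D^p$ ``extends to an ambient isotopy of $S^3$ isotopic to the identity (since the longitude twist is trivial in the mapping class group of a solid torus),'' carrying $\mathrm{cl}(\widetilde{\beta^{\bullet}})$ to $\mathrm{cl}(\widetilde{\alpha^{\bullet}})$. Both halves of this are false. The disk $D$ is a meridian disk of the solid torus $\mathcal{E}(\mathrm{cl}(\widetilde{\beta}(n+1)))$, and the twist along it restricts to a nontrivial Dehn twist on the boundary torus, hence has infinite order in the mapping class group of that solid torus; moreover that boundary Dehn twist (along the longitude of $\mathrm{cl}(\widetilde{\beta}(n+1))$) does not preserve the meridian of $\mathcal{N}(\mathrm{cl}(\widetilde{\beta}(n+1)))$, so $t_D^p$ does not even extend to a self-homeomorphism of $S^3$ --- only the exteriors are homeomorphic, which is all the paper claims. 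And the ordinary closures genuinely change under disk twists: $t_D^p$ inserts full twists among the strands meeting $D$ and alters pairwise linking numbers (compare, for the axis version, $\mathrm{cl}(e_2)$ versus $\mathrm{cl}(e_2\Delta^2)=\mathrm{cl}(\sigma_1^2)$). If $\mathrm{cl}(\widetilde{\beta^{\bullet}})$ and $\mathrm{cl}(\widetilde{\alpha^{\bullet}})$ were always ambient isotopic in $S^3$, the disk-twist machinery would produce nothing new. Your reduction to an equivariant isotopy upstairs in $S^2\times S^1$ is a legitimate framework for deducing $C(\beta^{\bullet})=C(\alpha^{\bullet})$, but you still owe the equivariant isotopy, and it cannot be manufactured from triviality of the disk twist; what actually makes (2) true is the compatibility of the twisting curve $c$ with the circular-plat caps, which is precisely the content of the cited Lemma 4 of \cite{HiroseKin202}.
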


\begin{center}
\begin{figure}
\includegraphics[height=5in]{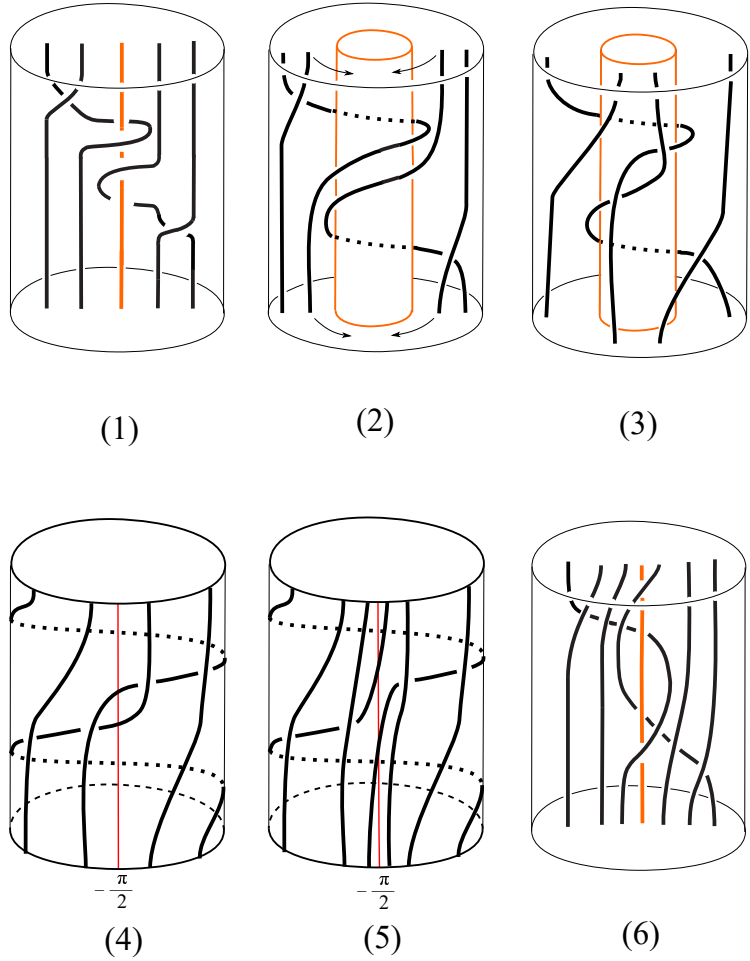} 
\caption{(1) The skew-palindromization 
$\widetilde{\beta}$ of $\beta=\sigma_3^2 \sigma_4 \in B_5$. 
(2) The regular neighborhood of the middle strand (third strand) is removed.  
(3) The base points are moved to the circle 
$\{ \frac{1}{2} e^{i \theta} \, | \, 0 \leq \theta \leq 2 \pi \}$. 
(4) Project $\mathrm{cl}(\widetilde{\beta^{\bullet}})$ to the torus. 
(5) Dehn twist about the circle $c= \{ -\frac{\pi}{2} \} \times [0,1]$ on the torus. 
(6) The skew-palindromization $\widetilde{\alpha(1)}$ 
of $\alpha(1)= \sigma_3 \sigma_4 \sigma_5 \sigma_6 \sigma_3 \in B_7$.  
(cf. Figure 7 in  \cite{HiroseKin202}.)
}
\label{fig:example}
\end{figure}
\end{center}

\begin{center}
\begin{figure}
\includegraphics[height=2in]{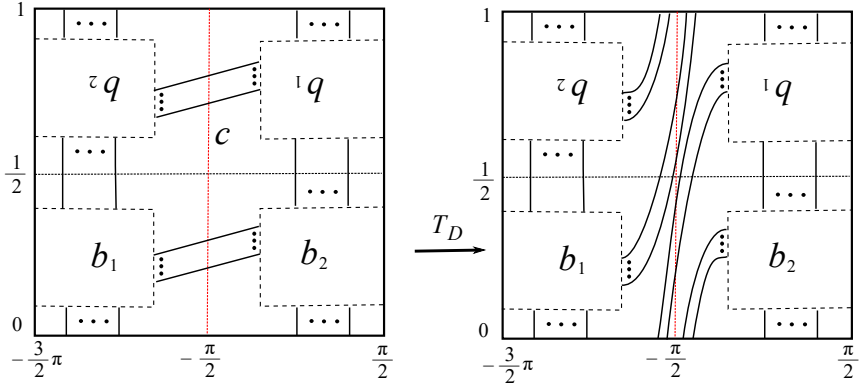} 
\caption{The braid $\beta^{\bullet} \in B_{2n}$ consists of $b_{1}$, $b_{2}$ and slanted parallel 
arcs between them. 
The disk twist $t_{D}$ induces a Dehn twist about 
$c= \{ -\frac{\pi}{2} \} \times [0,1]$ on the torus.  
Compare Figure \ref{fig:example}(4)(5) with Figure \ref{fig:torus-projection}.}
\label{fig:torus-projection}
\end{figure}
\end{center}

\begin{proof}
(1) 
Figure \ref{fig:example}  illustrates the procedure of the proof. 
(See also Example \ref{ex_demonstration}.)
We put $\mathrm{cl}(\widetilde{\beta})$ in $D^{2} \times S^{1}$ $= D^{2} \times ([0,1]/0\sim 1)$ 
such that $\mathrm{cl}(\widetilde{\beta}) \cap D^{2} \times [0,1/2]$ $= \beta$, 
$\mathrm{cl}(\widetilde{\beta}) \cap D^{2} \times [1,1/2]$ $= \mathrm{skew}(\beta)$
and $\mathrm{cl}(\widetilde{\beta}(n+1))$ corresponds to $(\text{the center of }D^{2}) \times S^{1}$, 
as shown in (1) of Figure \ref{fig:example}. 
Let $R_{D^{2}\times S^{1}}:D^{2}\times S^{1} \to D^{2}\times S^{1}$ be the involution 
induced by the involution $R(r e^{i\theta},t) = (r e^{i(\pi-\theta)},1-t)$ on $D^{2} \times [0,1]$. 
Let $\mathcal{N}$ be the tubular neighborhood of $(\text{the center of }D^{2}) \times S^{1}$ 
such that $\mathrm{cl}(\widetilde{\beta}) \cap (D^{2} \times S^{1} \setminus \mathrm{int} \, \mathcal{N})$ 
$=$ $\mathrm{cl}(\widetilde{{\beta}^{\bullet}})$, 
as shown in (2) of Figure \ref{fig:example}. 
We identify $D^{2} \times S^{1} \setminus \mathrm{int} \, \mathcal{N}$ with 
$[0,1] \times S^{1} \times S^{1} 
= [0,1] \times ([0, 2\pi]/ 0 \sim 2 \pi) \times ([0,1]/0 \sim 1)$ 
so that $\partial \mathcal{N} = \{ 0 \} \times S^{1} \times S^{1}$, 
$\{ \frac{1}{2} e^{i \theta} \, | \, 0 \leq \theta \leq 2 \pi \} 
 \times \{ \mathrm{pt} \} 
= \{ \frac{1}{2} \} \times S^{1}
 \times \{ \mathrm{pt} \} $, 
and for the disk $D$ associated to the pair $(\widetilde{\beta}, n+1)$, 
we have 
$(D^{2} \times S^{1} \setminus \mathrm{int} \, \mathcal{N}) \cap D = [0,1] \times \{ -\frac{\pi}{2}  \} \times S^{1}$. 
We deform $\mathrm{cl}(\widetilde{\beta^{\bullet}})$ 
so as to intersect $D^2 \times \{ 0 \}$ in $2n$ points 
$\{ \frac{1}{2} e^{i \theta} | \theta= -\frac{1}{2n+2}\pi, -\frac{2}{2n+2}\pi, \ldots, 
-\frac{n}{2n+2}\pi, -\frac{n+2}{2n+2}\pi, -\frac{n+3}{2n+2}\pi, \ldots, -\frac{2n+1}{2n+2}\pi \}$ 
and not to occur new intersections with $[0,1] \times \{ -\frac{\pi}{2} \} \times S^1$ 
by the isotopy commuting with the involution $R$, 
as shown in (3) of Figure \ref{fig:example}. 
We make a projection $\mathcal{D}(\mathrm{cl}(\widetilde{\beta^{\bullet}}))$ 
of $\mathrm{cl}(\widetilde{\beta^{\bullet}})$ onto 
$\{ \frac{1}{2} \} \times S^1 \times S^1$ with at most double points, and 
each double point indicates which is over pass or under pass 
in the same way as a knot diagram, 
as shown in (4) of Figure \ref{fig:example}. 
For short, we drop $\{ \frac{1}{2} \}$ from $\{ \frac{1}{2} \} \times S^1 \times S^1$ 
and identify it with a torus 
$S^{1} \times S^{1} = 
([-\frac{3}{2}\pi, \frac{1}{2}\pi]/ -\frac{3}{2}\pi \sim \frac{1}{2} \pi) 
\times ([0,1]/0 \sim 1)$. 
With this identification, the restriction of $R$ on $\{ \frac{1}{2} \} \times S^1 \times S^1$ 
is an involution which maps $(\theta, t)$ to $(-\pi - \theta, 1-t)$, i.e.,  
a $\pi$-rotation about $(-\frac{\pi}{2}, \frac{1}{2})$. 
Since $\beta$ is an increasing braid in the middle, $\mathcal{D}(\mathrm{cl}(\widetilde{\beta^{\bullet}}))$ 
intersects $-\frac{\pi}{2} \times [\frac{1}{2},1]$ in $u= u(\beta, n+1)$ points 
and $-\frac{\pi}{2} \times [0, \frac{1}{2}]$ in $u= u(\beta,n+1)$ points as shown in 
Figure \ref{fig:torus-projection}. 
The disk twist $t_{D}$ induces the Dehn twist about the circle 
$c=\{ -\frac{\pi}{2} \} \times [0,1]$ in $S^{1} \times S^{1}$ which commutes with 
$R|_{S^{1} \times S^{1}}$. 
This Dehn twist changes $\mathcal{D}(\mathrm{cl}(\widetilde{\beta^{\bullet}}))$ 
as shown in the right of Figure \ref{fig:torus-projection} 
and (5) of  Figure \ref{fig:example}. 
Now we append $(\text{the center of }D^{2}) \times S^{1}$ to the above result, 
that is, append an under-going string $\{ -\frac{\pi}{2} \} \times [0,1]$ in 
the right of Figure \ref{fig:torus-projection} and regard the new diagram 
as  a closure of the planar braid, 
as shown in (6) of Figure \ref{fig:example}. 
Let $\alpha(p)$ be the braid indicated by the lower part of the result. 
Then the upper part corresponds to $\mathrm{skew}(\alpha(p))$, and hence 
$ \widetilde{\alpha(p)}$ is a representative of $\langle \widetilde{\beta}_{p}\rangle $. 
The proof of (1) is done.


(2) 
Let $b \in B_{2g+2}$ and $f: S^1 \times S^1 \rightarrow S^1 \times S^1$ be the same notations as in \cite[p.1811]{HiroseKin202}. 
Let $\gamma$ (resp. $b_f$) be a braid on the annulus in the assumption $(*)$ 
(resp. the assumption $(**)$) in \cite[p.1811]{HiroseKin202}. 
We set $b:=  \beta^{\bullet}$ and $f:= (t_c)^p$. 
Then $b_f = \alpha(p)^{\bullet} (= \alpha^{\bullet})$ and 
$\gamma=\widetilde{\alpha(p)^{\bullet}} (= \widetilde{\alpha^{\bullet}})$ 
satisfy the assumptions  $(*)$ and $(**)$ in \cite[p.1811]{HiroseKin202}. 
By Lemma 4 of \cite{HiroseKin202}, we conclude that 
$C(\beta^{\bullet})$ and $C(\alpha^{\bullet})$ are ambient isotopic. 
This completes the proof of (2).

(3) The claim (3) holds from the claim (2) and Theorem  \ref{thm_2-fold}. 
This completes the proof of Lemma  \ref{lem_disktwist-skew}. 
 \end{proof}

\begin{remark}
The proof of Lemma \ref{lem_disktwist-skew} 
tells us that one can describe the braid $\alpha(p)$ in Lemma \ref{lem_disktwist-skew} concretely. 
In fact it is possible to read off $\alpha(p)$ from Figure \ref{fig:torus-projection}. 
For example, in the case $\beta = \sigma_3^2 \sigma_4 \in B_5$ as in Figure \ref{fig:example}(1), 
the representative $ \widetilde{\alpha(p)} \in \langle (\widetilde{\beta})_p \rangle$ 
is the skew-palindromization of the braid 
$\alpha(p)= \sigma_3 \sigma_4 \dots \sigma_{4+ 2p} \sigma_{2+p} \in B_{5+ 2p}$ 
that is increasing in the middle. 
\end{remark}

Let $\beta \in B_{2n+1}$ be the  braid increasing in the middle with the intersection number 
$u= u(\beta, n+1)$ as before. 
By Lemma \ref{lem_product} and Example \ref{ex_halftwist}, 
the product $\beta \Delta^k \in B_{2n+1} $ of $\beta$ and $\Delta^k$ for $k \ge 1$ 
is also increasing in the middle 
such that 
$$u(\beta \Delta^k, n+1) = u(\beta, n+1) + u(\Delta^k, n+1) = u+ kn.$$
Consider the skew-palindromization $\widetilde{\beta \Delta^k}$ of $\beta \Delta^k$. 
Recall that $\mathrm{skew}(\Delta)= \Delta$ (see Section \ref{subsection_homogeneous}), 
and hence $ \mathrm{skew}( \Delta^k) = \Delta^k$. 
Thus it follows that 
$$\widetilde{\beta \Delta^k} = \mathrm{skew}(\beta \Delta^k) \beta \Delta^k = 
\mathrm{skew}(\Delta^k)  \mathrm{skew}(\beta) \beta \Delta^k = \Delta^k \widetilde{\beta} \Delta^k.$$ 
By Examples \ref{ex_product} and \ref{ex_halftwist}, 
$\widetilde{\beta \Delta^k}$ is a braid increasing in the middle  
with  $u(\widetilde{\beta \Delta^k}, n+1) = 2 u(\beta \Delta^k, n+1)= 2u+ 2kn.$
Since $\Delta^k \widetilde{\beta} \Delta^k$ is conjugate to $\widetilde{\beta} \Delta^{2k}$ in $B_{2n+1}$, 
we have 
$$\mathrm{br}(\widetilde{\beta \Delta^k}) = \mathrm{br}(\Delta^k \widetilde{\beta} \Delta^k) 
= \mathrm{br}(\widetilde{\beta} \Delta^{2k}). $$

\begin{lem}
\label{lem_disktwist-skew2}
Let $\beta \in B_{2n+1}$ be a braid increasing in  the middle  with the intersection number $u= u(\beta, n+1)$. 
For $k \ge 1$,  we consider $\beta \Delta^k \in B_{2n+1}$ and its skew-palindromization $\widetilde{\beta \Delta^k}$. 
Let  $ \langle (\widetilde{\beta \Delta^k})_p \rangle$ 
be the conjugacy class of a braid obtained from $\widetilde{\beta \Delta^k}$ by the disk twist $p$ times 
for $p \ge 1$. 
We have the following. 
\begin{enumerate}
\item[(1)] 
There exists a braid $\gamma= \gamma(k, p) \in B_{2n+1+ p(2u+ 2kn)}$ increasing in the middle 
such that $\widetilde{\gamma} = \widetilde{\gamma(k,p)} \in \langle (\widetilde{\beta \Delta^k})_p \rangle$. 

\item[(2)] 
$C(\beta^{\bullet})=  C(\gamma^{\bullet}) $, 
where $\gamma^{\bullet}= \gamma(k,p)^{\bullet} \in B_{2n+ p(2u+ 2kn)}$.

\item[(3)] 
$\mathfrak{t}(\widetilde{\gamma^{\bullet}})  \in \mathcal{D}_g(M_{C(\beta^{\bullet})})$ 
for $g= n+pu+ pkn-1 \equiv pu-1 \pmod n$. 
\end{enumerate}
\end{lem}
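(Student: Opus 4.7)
The plan is to reduce the statement to Lemma \ref{lem_disktwist-skew} applied to the braid $\beta' := \beta \Delta^k \in B_{2n+1}$. The computation immediately preceding the statement already verifies that $\beta'$ is itself increasing in the middle, with intersection number $u' := u(\beta', n+1) = u + kn$, and that $\widetilde{\beta'} = \Delta^k\,\widetilde{\beta}\,\Delta^k$. Thus the hypotheses of Lemma \ref{lem_disktwist-skew} are satisfied by $\beta'$ in the same ambient group $B_{2n+1}$, with $u$ replaced by $u'$.

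For claim (1), I would simply feed $\beta'$ into Lemma \ref{lem_disktwist-skew}(1). This yields a braid
$$\gamma = \gamma(k,p) \in B_{2n + 2pu' + 1} = B_{2n+1+p(2u+2kn)},$$
increasing in the middle, whose skew-palindromization $\widetilde{\gamma}$ represents $\langle (\widetilde{\beta'})_p \rangle = \langle (\widetilde{\beta\Delta^k})_p \rangle$, which is precisely the conclusion of (1).

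For claim (2), the key small observation is that $\Delta = \Delta_{2n+1}$ fixes the middle index: since the half-twist has permutation $i \mapsto 2n+2 - i$, we have $\pi_{\Delta^k}(n+1) = n+1$ for every $k$. Because both $\beta$ and $\Delta^k$ fix the middle strand, removing that strand distributes over the product, and using Example \ref{ex_halftwist} one obtains
$$(\beta \Delta^k)^{\bullet} = \beta^{\bullet} \cdot (\Delta^k)^{\bullet} = \beta^{\bullet}\,\Delta_{2n}^k.$$
Lemma \ref{lem_disktwist-skew}(2) applied to $\beta'$ gives $C((\beta')^{\bullet}) = C(\gamma^{\bullet})$, while the identity (\ref{equation_halftwist}) gives $C(\beta^{\bullet}\Delta_{2n}^k) = C(\beta^{\bullet})$. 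Chaining these two equalities yields $C(\beta^{\bullet}) = C(\gamma^{\bullet})$, as required.

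Claim (3) then follows from claim (2) together with Theorem \ref{thm_2-fold} applied to $\widetilde{\gamma^{\bullet}} \in B_{2(n+pu+pkn)} = B_{2g+2}$, giving $\mathfrak{t}(\widetilde{\gamma^{\bullet}}) \in \mathcal{D}_g(M_{C(\beta^{\bullet})})$ with $g = n + pu + pkn - 1$; the congruence $g \equiv pu - 1 \pmod{n}$ is immediate from $n + pkn = n(1+pk)$. The only step that is not entirely mechanical is the identity $(\beta\Delta^k)^{\bullet} = \beta^{\bullet}\Delta_{2n}^k$, which rests on $\Delta^k$ fixing the middle index. Once that bookkeeping is in hand, the lemma is essentially a corollary of Lemma \ref{lem_disktwist-skew} combined with (\ref{equation_halftwist}); no new geometric input is required.
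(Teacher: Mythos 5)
Your proposal is correct and follows essentially the same route as the paper: claim (1) is Lemma \ref{lem_disktwist-skew}(1) applied to $\beta\Delta^k$ (which is increasing in the middle with intersection number $u+kn$ by Lemma \ref{lem_product} and Example \ref{ex_halftwist}), claim (2) chains Lemma \ref{lem_disktwist-skew}(2) with $(\beta\Delta^k)^{\bullet}=\beta^{\bullet}\Delta_{2n}^{k}$ and the identity (\ref{equation_halftwist}), and claim (3) follows from (2) and Theorem \ref{thm_2-fold}. Your added remark that $\Delta^k$ fixes the middle index, so that removing the middle strand distributes over the product, is exactly the bookkeeping the paper delegates to Example \ref{ex_halftwist}.
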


\begin{proof}
Recall that for each $k \ge1$, 
$\beta \Delta^k \in B_{2n+1} $ is a braid increasing  in the middle with the intersection number $u(\beta \Delta^k, n+1) = u+ kn$. 
The claim (1) follows immediately from Lemma \ref{lem_disktwist-skew}(1).  
For the proof of the claim (2), note that 
$$(\beta \Delta^k)^{\bullet} = \beta^{\bullet} (\Delta^k)^{\bullet} = \beta^{\bullet} \Delta_{2n}^k \in B_{2n}, $$ 
see Example \ref{ex_halftwist}. 
Thus 
$C((\beta \Delta^k)^{\bullet}) = C(\beta^{\bullet} \Delta_{2n}^k) = C(\beta^{\bullet} )$, 
see (\ref{equation_halftwist}) in Section \ref{subsection_closures} 
for the  second equality.  
By Lemma \ref{lem_disktwist-skew}(2), 
we have $C((\beta \Delta^k)^{\bullet}) = C(\gamma^{\bullet})$.  
Putting them together, we obtain $C(\beta^{\bullet})= C(\gamma^{\bullet})$. 
The proof of (2) is done. 
The claim (3) holds from the claim (2) and Theorem  \ref{thm_2-fold}. 
This completes the proof. 
\end{proof}

We are now ready to prove Theorem \ref{thm_fibered-cone}. 

\begin{proof}[Proof of Theorem \ref{thm_fibered-cone}]
By the assumption of Theorem \ref{thm_fibered-cone}, 
$\widetilde{\beta}$ is pseudo-Anosov, and it is increasing in the middle. 
By Theorem \ref{thm_disktwist}(1)(2), 
the subcone $C_{\widetilde{\beta}}= C_{(\widetilde{\beta}, n+1)}$ 
is a subset of  the fibered cone $\mathscr{C}$ containing $[F_{\widetilde{\beta}}]$. 
Moreover the fiber $F_{(x,y)}$ for each primitive integral class $(x,y) \in C_{\widetilde{\beta}}$ 
has genus $0$.

Let $D= D_{(\widetilde{\beta}, n+1)}$ be the associated disk 
of  the  braid $\widetilde{\beta}$ increasing in the middle.  
We consider two types of the disk twists. 
One is $t_{D_A}^k: \mathcal{E}(A) \rightarrow \mathcal{E}(A)$ 
for the braid axis $A$ of $\widetilde{\beta}$,  
and the other is 
$t_D^p: \mathcal{E}(\mathrm{cl}(\widetilde{\beta}(n+1))) \rightarrow \mathcal{E}(\mathrm{cl}(\widetilde{\beta}(n+1))) $, 
where 
$\widetilde{\beta}(n+1)$ is the middle strand  of the $(2n+1)$-braid $\widetilde{\beta}$. 
Consider the homeomorphisms 
\begin{eqnarray*}
h_{D_{A}, k}&:& \mathcal{E}(\mathrm{br}(\widetilde{\beta})) \rightarrow \mathcal{E}(\mathrm{br}(\widetilde{\beta} \Delta^{2k})) 
=  \mathcal{E}(\mathrm{br}(\widetilde{\beta \Delta^k} )), 
\\
h_{D, p}&:& \mathcal{E}(\mathrm{br}(\widetilde{\beta})) \rightarrow  \mathcal{E}(\mathrm{br}((\widetilde{\beta})_p )) 
\simeq \mathcal{E} (\mathrm{br}(\widetilde{\alpha(p)})), 
\end{eqnarray*}
where 
$\widetilde{\alpha(p)}$ is the braid obtained from Lemma \ref{lem_disktwist-skew}(1). 
We obtain the skew-palindromization  
$\widetilde{\beta \Delta^k} = \Delta^k \widetilde{\beta} \Delta^k$ (that is increasing in  the middle)  
from the former homeomorphism $h_{D_{A}, k}$. 
We also obtain the skew-palindromization   
$\widetilde{\alpha(p)} $ (that is increasing in  the middle)  
from the latter  homeomorphism $h_{D, p}$. 
Both  braids are pseudo-Anosov, since 
the exteriors of the links $\mathrm{br}(\widetilde{\beta})$, $\mathrm{br}(\widetilde{\beta \Delta^{k}})$ 
and $\mathrm{br}(\widetilde{\alpha(p)})$ are homeomorphic to each other. 
Hence one can further apply two types of the disk twists for each of the two braids $\widetilde{\beta \Delta^k}$ and $\widetilde{\alpha(p)}$. 
Then the resulting braids are again 
 the skew-palindromization  of some braids that are increasing in the middle  by 
 Lemmas \ref{lem_disktwist-skew}(1) and \ref{lem_disktwist-skew2}(1). 
Choosing  two types of the disk twists alternatively, 
one obtains a family of skew-paindromizations (of some braids) that are  increasing in the middle.  
By the proof of Theorem 3.2(3) in \cite{HiroseKin201}, 
the monodromy $\phi_{(x,y)}: F_{(x,y)} \rightarrow F_{(x,y)}$ 
of the fibration  $T_{\widetilde{\beta}} \rightarrow S^1$ 
associated with any primitive integral class  $ (x,y) \in C_{\widetilde{\beta}}$ 
is given by a braid, 
say the skew-palindromization $\widetilde{\alpha_{(x,y)}}$ of some braid $\alpha_{(x,y)}$ in the family. 
The planar braid $ \alpha_{(x,y)}$ is the desired braid. 
Let $2N+1$ be the number of the strands of $ \alpha_{(x,y)}$ that is increasing in the middle. 
Since the Thurston norm $\|(x,y)\|$ of the class $(x,y)$ 
is the negative Euler characteristic of the $(2N+1)$-punctured disk 
that is equal to $2N$. 
Thus $2N+1= \|(x,y)\|+1$ and hence $\alpha_{(x,y)} \in B_{\|(x,y)\|+1}$. 
This completes the proof of (1). 

 The claim (2)  follows from Lemma \ref{lem_disktwist-skew}(2)(3) and Lemma \ref{lem_disktwist-skew2}(2)(3)
 together with the above argument in the proof of (1). 
 \end{proof}

\section{Applications}
\label{section_applications}

For the proofs of Theorems \ref{thm_infinite} and \ref{thm_infinite-hyperbolic}, 
we first prove the following result. 

\begin{prop}
\label{prop_finite} 
Let $L$ be a link in $S^3$. 
Let 
$\beta_{(1)},  \dots, \beta_{(n)} \in B_{2n+1}$ be increasing in the middle for some $n \ge 2$. 
Suppose that $\beta_{(1)},  \dots, \beta_{(n)}$ satisfy the following conditions (1)--(3): 
For each $j= 1, \dots, n$,
\begin{enumerate}
\item[(1)] 
$u(\beta_{(j)}, n+1) \equiv j$ $\pmod n$, 
where $u(\beta_{(j)}, n+1)$ is the intersection number of the pair $(\beta_{(j)}, n+1)$.

\item[(2)] 
$L = C(\beta_{(j)}^{\bullet})$, where $\beta_{(j)}^{\bullet} \in B_{2n}$. 

\item[(3)] 
The skew-palindromization $\widetilde{\beta_{(j)}} $ of $\beta_{(j)}$ is  pseudo-Anosov. 
\end{enumerate}
Then we have $\ell_g(M_L) \asymp \dfrac{1}{g}$.  
\end{prop}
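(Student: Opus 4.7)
The lower bound $\ell_g(M_L) \geq \ell(\mathrm{MCG}(\Sigma_g)) \gtrsim 1/g$ is immediate from Penner's estimate, so the task is to produce, for every sufficiently large $g$, a pseudo-Anosov element of $\mathcal{D}_g(M_L)$ of entropy at most $C/g$ for some uniform constant $C$ depending only on the $\beta_{(j)}$'s.

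For each $j \in \{1, \dots, n\}$, the braid $\widetilde{\beta_{(j)}}$ is pseudo-Anosov (by hypothesis (3)) and increasing in the middle (by Example \ref{ex_product}), so Theorem \ref{thm_fibered-cone} applies: it produces a $2$-dimensional subcone $C_{\widetilde{\beta_{(j)}}}$ of a fibered cone $\mathscr{C}_j$ of the hyperbolic $3$-manifold $T_{\widetilde{\beta_{(j)}}}$, and for every primitive integral class $(x,y) \in C_{\widetilde{\beta_{(j)}}}$ a mapping class $\mathfrak{t}(\widetilde{\alpha_{(x,y)}^{\bullet}}) \in \mathcal{D}_g(M_L)$ with $g = \|(x,y)\|/2 - 1$. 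To see that every sufficiently large $g$ is covered, I use linearity of the Thurston norm on $\mathscr{C}_j$ (Theorem \ref{thm_norm}(2)) together with $\|[F_{\widetilde{\beta_{(j)}}}]\| = 2n$ and $\|[E_{(\widetilde{\beta_{(j)}}, n+1)}]\| = 2u_j$, where $u_j := u(\beta_{(j)}, n+1)$; the latter is consistent with Lemma \ref{lem_disktwist-skew}(3), which matches the class $(1,p)$ to a fibration on a surface of genus $n + p u_j - 1$. One obtains $\|(x,y)\| = 2nx + 2u_j y$, hence $g = nx + u_j y - 1$. Specializing to $y = 1$ and varying $x \in \mathbb{Z}_{\geq 1}$ produces every sufficiently large $g \equiv u_j - 1 \equiv j - 1 \pmod n$ by hypothesis (1), and letting $j$ range over $\{1, \dots, n\}$ exhausts all residue classes modulo $n$.

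For the entropy bound I invoke Fried's theorem (Theorem \ref{thm_Fried_1}(2)): the normalized entropy $\mathrm{Ent}$ is continuous on $\mathscr{C}_j$ and constant along rays, hence bounded above on $C_{\widetilde{\beta_{(j)}}}$ by a constant $E_j$. For each primitive integral $(x,1) \in C_{\widetilde{\beta_{(j)}}}$, the pseudo-Anosov monodromy $\phi_{(x,1)}$ is represented by the braid $\widetilde{\alpha_{(x,1)}}$ of Theorem \ref{thm_fibered-cone}(1), so
\[
\log \lambda\bigl(\widetilde{\alpha_{(x,1)}}\bigr) \;=\; \frac{\mathrm{Ent}(x,1)}{\|(x,1)\|} \;\leq\; \frac{E_j}{2g+2}.
\]
Once $\widetilde{\alpha_{(x,1)}^{\bullet}}$ is verified to be pseudo-Anosov, capping the middle puncture preserves the dilatation, giving $\lambda(\widetilde{\alpha_{(x,1)}^{\bullet}}) = \lambda(\widetilde{\alpha_{(x,1)}})$. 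Lemma \ref{lem_disk} then yields $\lambda(\mathfrak{t}(\widetilde{\alpha_{(x,1)}^{\bullet}})) = \lambda(\widetilde{\alpha_{(x,1)}^{\bullet}})$, and setting $C := \tfrac{1}{2}\max_j E_j$ produces $\log \lambda(\mathfrak{t}(\widetilde{\alpha_{(x,1)}^{\bullet}})) \leq C/g$, completing the upper bound and thereby the asymptotic equivalence.

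The hard part will be the verification that $\widetilde{\alpha_{(x,1)}^{\bullet}}$ is pseudo-Anosov and satisfies condition $\diamondsuit$ of Lemma \ref{lem_disk}; equivalently, that the stable foliation of the pseudo-Anosov braid $\widetilde{\alpha_{(x,1)}}$ has at least two prongs both at the middle puncture and at $\partial D$. A natural approach is to exploit the $\mathrm{skew}$-symmetry built into $\widetilde{\alpha_{(x,1)}}$ (coming from the involution $R$) to force these prong counts at the fixed points of $R$ to be even, hence $\geq 2$. A complementary route uses Theorem \ref{thm_2-fold}: the mapping torus $T_{\mathfrak{t}(\widetilde{\alpha_{(x,y)}^{\bullet}})}$ coincides with a single $2$-fold branched cover $\widetilde{M_L}$ of $M_L$ independently of $(x,y)$, so once hyperbolicity of $\widetilde{M_L}$ is established for even one admissible $(x,y)$, every $\mathfrak{t}(\widetilde{\alpha_{(x,y)}^{\bullet}})$ is automatically pseudo-Anosov by Thurston's hyperbolization, and Fried's theorem applied on $\widetilde{M_L}$ itself delivers the entropy bound directly.
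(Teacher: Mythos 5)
Your overall architecture coincides with the paper's: Penner for the lower bound; the subcone $C_{\widetilde{\beta_{(j)}}}$, the primitive classes $(x,1)$ and Fried's continuity of $\mathrm{Ent}$ for the upper bound; and condition (1) to sweep out every residue of $g$ modulo $n$ (your $g=nx+u_j-1$ is the paper's $g=n+u_j+kn-1$ under $x=k+1$). One small imprecision in the part you do carry out: $\mathrm{Ent}$ need not be bounded on all of $C_{\widetilde{\beta_{(j)}}}$, since it can blow up toward the boundary ray through $[E_{(\widetilde{\beta_{(j)}},n+1)}]$, which need not lie in the fibered cone; this is harmless because the rays through $(x,1)$ accumulate only on the ray through $(1,0)$, so $\sup_x \mathrm{Ent}(x,1)<\infty$ — the paper phrases this as $\mathrm{Ent}((k+1,1))\to\mathrm{Ent}(\widetilde{\beta_{(j)}})$.

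The genuine gap is the step you explicitly defer: that $\widetilde{\alpha_{(x,1)}^{\bullet}}$ is pseudo-Anosov with the same dilatation as $\widetilde{\alpha_{(x,1)}}$ and satisfies condition $\diamondsuit$ of Lemma \ref{lem_disk}. This is precisely where the paper invokes Lemma 6.3 of \cite{HiroseKin201}, and notably only \emph{for $k$ large}. Neither of your proposed routes closes it. The parity argument does not work as stated: an involution preserving a measured foliation can perfectly well fix a $1$-pronged singularity (it must fix the unique prong and acts as the identity or the reflection across it), so skew-symmetry does not force even prong counts at the middle puncture or at $\partial D$; the fact that the paper needs $x$ large is further evidence that no symmetry argument valid for all $x$ can be the mechanism. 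The second route rests on a false premise: the mapping tori $T_{\mathfrak{t}(\widetilde{\alpha_{(x,y)}^{\bullet}})}$ do not coincide with a single manifold $\widetilde{M_L}$ independent of $(x,y)$. Each is a $2$-fold branched cover of $M_L$, but Theorem \ref{thm_2-fold} branches over $q^{-1}(W)$, and the position of $W$ depends on the bridge presentation of $L$ determined by the braid $\alpha_{(x,y)}^{\bullet}$, hence on $(x,y)$; these covers are in general distinct closed $3$-manifolds, and you give no identification between them. Consequently you cannot apply Fried's theorem ``on $\widetilde{M_L}$ itself,'' and pseudo-Anosovness of one monodromy does not propagate to the others this way. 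The missing content is exactly the cited Lemma 6.3, and without it (or a substitute argument) the upper bound is not established.
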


\begin{proof}
We fix $j \in \{1, \dots, n\}$ for a moment, and apply Lemma \ref{lem_disktwist-skew2} 
to $ \beta_{(j)}$, $k \ge 1$ and $p=1$. 
Let $ \gamma(k,1)$ be a braid increasing in  the middle 
given by Lemma \ref{lem_disktwist-skew2}(1). 
By \cite[Theorem 5.2]{HiroseKin201}, 
a representative of $ \langle (\widetilde{\beta_{(j)} \Delta^k})_1 \rangle$ gives the monodromy 
$\phi_{(k+1,1)}: F_{(k+1, 1)} \to F_{(k+1,1)}$ corresponding to the primitive integral class 
$(k+1,1) \in C_{\widetilde{\beta_{(j)}}}$ of the fibered $3$-manifold $T_{\widetilde{\beta_{(j)}}}$. 
See Figure \ref{fig_subcone_ver2}(1) for the class $(k+1,1)$. 
In particular 
the representative $\widetilde{\gamma(k,1)} \in  \langle (\widetilde{\beta_{(j)} \Delta^k})_1 \rangle$ 
gives the monodromy $\phi_{(k+1,1)}: F_{(k+1, 1)} \to F_{(k+1,1)}$ 
of the fibration $T_{\widetilde{\beta_{(j)}}} \rightarrow S^1$.  
Hence we can say that 
$\widetilde{\gamma(k,1)}$ is a braid with $\|(k+1,1)\|+1$ strands. 
(Recall that $\|(x,y)\|$ is the Thurston norm of the class $(x,y)$.)

Note that the ray of the class $(k+1,1) = (k+1)(1, \frac{1}{k+1})$ through the origin converges to the ray of $(1,0)$ 
as $k \to \infty$.  
This together with Theorem \ref{thm_Fried_1}(2) implies that 
$$\mathrm{Ent}(\widetilde{\gamma(k,1)}) = \mathrm{Ent}((k+1, 1))= 
\mathrm{Ent}((1, \tfrac{1}{k+1})) \rightarrow \mathrm{Ent}((1,0)) \ \mbox{as}\ k \to \infty.$$
Since the monodromy on the fiber $F_{(1,0)} = F_{\widetilde{\beta_{(j)}}}$  
is given by  $\widetilde{\beta_{(j)}}$, 
\begin{equation}
\label{equation_converge}
\mathrm{Ent}(\widetilde{\gamma(k,1)})  =  \mathrm{Ent}((k+1, 1))
\rightarrow \mathrm{Ent}((1,0)) = 
 \mathrm{Ent}(\widetilde{\beta_{(j)}}) \ \mbox{as}\ k \to \infty.
 \end{equation}

By \cite[Lemma 6.3]{HiroseKin201}, for $k$ large, 
$\widetilde{\gamma(k,1)^{\bullet}} \in B_{\|(k+1,1)\|}$ is pseudo-Anosov with 
the same dilatation as $\widetilde{\gamma(k,1)}$. 
By the arguments in the proof of  \cite[Lemma 6.3]{HiroseKin201}, 
one sees that  for $k$ large, the pseudo-Anosov braid $\widetilde{\gamma(k,1)^{\bullet}}$ 
satisfies the condition $\diamondsuit$ in Lemma \ref{lem_disk}. 
Therefore, 
for $k$ large, 
$\mathfrak{t}(\widetilde{\gamma(k,1)^{\bullet}}) $ is still pseudo-Anosov 
with the same dilatation as $\widetilde{\gamma(k,1)^{\bullet}}$. 
Then by Theorem \ref{thm_2-fold}, it holds 
$\mathfrak{t}(\widetilde{\gamma(k,1)^{\bullet}}) \in \mathcal{D}_{\frac{\|(k+1,1)\|}{2}-1} (M_L)$, 
where 
$L= C(\beta_{(j)}^{\bullet}) = C(\gamma(k,1)^{\bullet})$ 
by Lemma \ref{lem_disktwist-skew2}(2). 
Putting them together, we have
$$\lambda(\mathfrak{t}(\widetilde{\gamma(k,1)^{\bullet}}))= \lambda(\widetilde{\gamma(k,1)^{\bullet}}) =\lambda( \widetilde{\gamma(k,1)}) 
= \lambda((k+1,1)),$$
where $\lambda((x,y))$ denotes the dilatation of the class $(x,y)$, 
i.e., 
$\log(\lambda((x,y))) = \mathrm{ent}((x,y))$.  
(See Theorem \ref{thm_Fried_1}(1).)
Since $\mathfrak{t}(\widetilde{\gamma(k,1)^{\bullet}})$ is the mapping class on the closed surface of 
genus $\frac{\|(k+1,1)\|}{2}-1$, we have 
\begin{equation}
\label{equation_bullet}
  \begin{split}
    \mathrm{Ent}(\mathfrak{t}(\widetilde{\gamma(k,1)^{\bullet}}))  &=(\|(k+1,1)\| -4) \log (\lambda( \mathfrak{t}(\widetilde{\gamma(k,1)^{\bullet}})))  \\
            &= (\|(k+1,1)\| -4) \mathrm{ent}((k+1,1)).
  \end{split}
\end{equation}


\noindent
{\bf Claim 1}. 
$\mathrm{ent}((k+1,1)) \rightarrow 0$ as $k \rightarrow \infty$. 
\medskip

\noindent
Proof of Claim 1. 
By (\ref{equation_converge}), 
$\mathrm{Ent}((k+1, 1)) (= \|(k+1,1)\| \mathrm{ent}((k+1,1))) \rightarrow  \mathrm{Ent}((1,0)) $ 
as $k \to \infty$. 
This implies that there exists a constant $P>0$ independent of $k$ such that 
$$0 < \|(k+1,1)\| \mathrm{ent}((k+1,1)) < P$$ 
for all $k \ge 1$. 
Since $ \|(k+1,1)\| \to \infty$ as $k \to \infty$, 
we obtain 
$$\mathrm{ent}((k+1,1)) < \frac{P}{\|(k+1,1)\|} \rightarrow 0 \hspace{2mm}\mbox{as}\ k \to \infty.$$
This completes the proof of Claim 1. 
\medskip


By  (\ref{equation_bullet}), Claim 1 and (\ref{equation_converge}), one has 
\begin{equation}
\label{equation_converge-t}
 \begin{split}
  \lim_{k \to \infty}\mathrm{Ent}(\mathfrak{t}(\widetilde{\gamma(k,1)^{\bullet}}))  
  &=\lim_{k \to \infty} (\|(k+1,1)\| -4) \mathrm{ent}((k+1,1))  
\\
&= \lim_{k \to \infty} \|(k+1,1)\| \mathrm{ent}((k+1,1)) 
\\
&= \lim_{k \to \infty} \mathrm{Ent}((k+1,1)) \hspace{2mm} (\because\mbox{definition of } \mathrm{Ent}(\cdot))
\\
&= \mathrm{Ent}(\widetilde{\beta_{(j)}}).
  \end{split}
\end{equation}
%
%
%
%
%
On the other hand,  Lemma \ref{lem_disktwist-skew2}(3)  tells us that 
$\mathfrak{t}(\widetilde{\gamma(k,1)^{\bullet}}) \in \mathcal{D}_g(M_L)$, 
where 
\begin{equation}
\label{equation_genus}
g = n + u(\beta_{(j)}, n+1) + kn -1 \equiv  u(\beta_{(j)}, n+1)-1 \equiv j-1 \pmod n.
\end{equation}
(See the condition (1) of Proposition \ref{prop_finite}.) 
For $k \ge 1$, consider the set of all pseudo-Anosov mapping classes 
$\mathfrak{t}(\widetilde{\gamma(k,1)^{\bullet}})$ obtained from 
 $\beta_{(j)}$ over all $j= 1, \dots, n$. 
Then by (\ref{equation_genus}) together with  the condition (1) of Proposition \ref{prop_finite}, 
one can find 
a sequence of pseudo-Anosov elements 
$\phi_g \in \mathcal{D}_g(M_L)$ for $g \gg 0$ 
in this set. 
 In fact when $g \equiv j-1$  $\pmod n$, 
 one can put  $\phi_g= \mathfrak{t}(\widetilde{\gamma(k,1)^{\bullet}})$ 
 obtained from the braid  $\beta_{(j)}$, 
 where $k$ satisfies the equality (\ref{equation_genus}). 
Since each of braids 
$\widetilde{\beta_{(1)}}, \dots, \widetilde{\beta_{(n)}}$ satisfies (\ref{equation_converge-t}), 
there exists a constant $C'>0$ independent of $g$  
so that 
$\ell_g(M_L) \le \log(\lambda(\phi_g)) \le \tfrac{C'}{g}$. 

The result $\ell(\mathrm{MCG}(\Sigma_g)) \asymp \tfrac{1}{g}$ by Penner  \cite{Penner91} 
tells us that 
there exists a constant $C>0$ independent of $g$ 
so that $\tfrac{1}{Cg} \le \ell_g(M_L) $. 
We conclude that 
 $\ell_g(M_L) \asymp \tfrac{1}{g}$. 
This completes the proof. 
\end{proof}

\begin{center}
\begin{figure}[t]
\includegraphics[height=4.5cm]{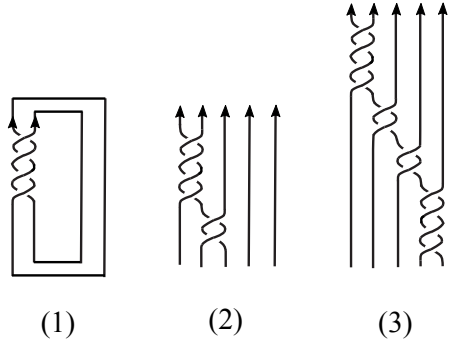}
\caption{
Case $b= \sigma_1^4 $. 
(1) $\mathrm{cl}(b)$ for $b \in B_2$.  
(2) $ \eta_{(b,1)} = \overline{b} \sigma_2^2 =  \sigma_1^4 \sigma_2^2 \in B_5$. 
(3) $\eta_{(b,1)} \cdot \mathrm{skew}(\eta_{(b,1)})= \sigma_1^4 \sigma_2^2 \sigma_3^2 \sigma_4^4 \in B_5$.}
\label{fig_kobayashi}
\end{figure}
\end{center}

\begin{thm}
\label{thm_application} 
Let  $b \in B_n$ be a  pure braid for $n \ge 2$ of the form 
$$b= \sigma_{j_1}^{2m_1} \sigma_{j_2}^{2m_2} \dots \sigma_{j_k}^{2m_k},$$
where $m_1, \dots, m_k$ are non-zero integers and 
$j_1, \dots, j_k \in \{1, \dots, n-1\}$. 
Suppose that $b$ is homogeneous, 
and each $\sigma_i$ for $i= 1, \dots, n-1$ appears in $b$ at least once, 
i.e., 
$\{j_1, \dots, j_k\}= \{1, \dots, n-1\}$. 
Then for the $2$-fold branched cover $M_{\mathrm{cl}(b)}$ of $S^3$ branched over the closure $\mathrm{cl}(b)$ of $b$,  
we have 
$\ell_g(M_{\mathrm{cl}(b)})  \asymp \dfrac{1}{g}$. 
In particular, 
if a pure braid $b \in B_n$ is of the form 
$$b =\sigma_1^{2 m_1} \sigma_2^{2 m_2} \cdots \sigma_{n-1}^{2 m_{n-1}},$$
then we have 
$\ell_g(M_{\mathrm{cl}(b)})  \asymp \dfrac{1}{g}$. 
\end{thm}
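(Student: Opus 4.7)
The strategy is to apply Proposition \ref{prop_finite} to the link $L=\mathrm{cl}(b)$ by constructing, for each $j\in\{1,\ldots,n\}$, a braid $\beta_{(j)}\in B_{2n+1}$ increasing in the middle that meets conditions (1)--(3) of the proposition. The construction I have in mind is the one suggested by Figure \ref{fig_kobayashi}, namely
\[
\beta_{(j)} := \overline{b}\cdot \sigma_n^{2j}\in B_{2n+1},
\]
where $\overline{b}$ denotes $b$ embedded in $B_{2n+1}$ using the first $n$ strands, and $\sigma_n$ is the generator of $B_{2n+1}$ twisting strand $n$ with the middle strand $n+1$. The second factor plays a Markov-stabilization-like role: it attaches controlled positive intersections between the middle strand and $\mathrm{cl}(\overline{b})$, but disappears when the middle strand is erased.

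First I will verify that $\beta_{(j)}$ is increasing in the middle with $u(\beta_{(j)},n+1)=j$. Since $\overline{b}$ does not touch strand $n+1$ and $\sigma_n^{2j}$ is a power of a positive full twist between strands $n$ and $n+1$, the middle strand is pure and the associated disk bounded by the longitude of its closure is pierced transversally and positively by $\mathrm{cl}(\overline{b})$ at exactly $j$ points, one per full twist. This gives condition (1), since $j\equiv j\pmod n$. For condition (2), erasing the middle strand collapses the $\sigma_n^{2j}$ factor, so $\beta_{(j)}^{\bullet}\in B_{2n}$ equals $b$ on the first $n$ strands together with $n$ trivial strands; rotational symmetry of the circular plat closure (as used in Remark \ref{rem_circular-plat}) then identifies $C(\beta_{(j)}^{\bullet})$ with $\mathrm{cl}(b)=L$.

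The main step is condition (3), that each $\widetilde{\beta_{(j)}}\in B_{2n+1}$ is pseudo-Anosov, which I will establish via Lemma \ref{lem_torus-link}. Direct computation gives
\[
\widetilde{\beta_{(j)}} = \sigma_{n+1}^{2j}\cdot \mathrm{skew}(b)\cdot b\cdot \sigma_n^{2j},
\]
using that $\mathrm{skew}$ in $B_{2n+1}$ sends $\sigma_i$ to $\sigma_{2n+1-i}$. By hypothesis $b$ uses exactly the generators $\sigma_1,\ldots,\sigma_{n-1}$, so $\mathrm{skew}(b)$ uses exactly $\sigma_{n+2},\ldots,\sigma_{2n}$, while the two added factors supply $\sigma_n$ and $\sigma_{n+1}$; thus every generator $\sigma_1,\ldots,\sigma_{2n}$ of $B_{2n+1}$ appears in $\widetilde{\beta_{(j)}}$. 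Every generator occurs with even total exponent and the supports of $b$, $\mathrm{skew}(b)$, $\sigma_n^{2j}$, $\sigma_{n+1}^{2j}$ are pairwise disjoint, so purity is automatic and homogeneity of $\widetilde{\beta_{(j)}}$ is inherited from that of $b$. Lemma \ref{lem_torus-link} now yields the pseudo-Anosov property.

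With all three conditions verified, Proposition \ref{prop_finite} immediately gives $\ell_g(M_{\mathrm{cl}(b)})\asymp 1/g$, and the second statement follows as the special case where $\{j_1,\ldots,j_k\}=\{1,\ldots,n-1\}$ and the $j_i$ are distinct. The most delicate point is condition (2): one must check carefully that the circular plat closure does not depend on which consecutive block of $n$ strands houses $b$. This is essentially the rotational symmetry of the pairing used in the circular plat construction and can be handled by the same type of isotopy argument recorded after Remark \ref{rem_circular-plat}.
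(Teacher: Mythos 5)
Your proposal is correct and follows essentially the same route as the paper: the paper also feeds $\overline{b}\,\sigma_n^{2j}$ into Proposition \ref{prop_finite} (taking $\beta_{(j)}=\mathrm{skew}(\overline{b}\,\sigma_n^{2j})$ rather than $\overline{b}\,\sigma_n^{2j}$ itself, an immaterial difference since both skew-palindromizations are homogeneous products of nonzero even powers of all generators and hence pseudo-Anosov by Lemma \ref{lem_torus-link}), and it likewise verifies the intersection number, the identification $C(\beta_{(j)}^{\bullet})=\mathrm{cl}(b)$ via the circular-plat symmetry $C(\Delta x)=C(x)=C(x\Delta)$, and then invokes Proposition \ref{prop_finite}.
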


To prove Theorem \ref{thm_application}, 
we need the following lemma.

\begin{lem}
\label{lem_pseudo-Anosov}
Let  $b =   \sigma_{j_1}^{2m_1} \sigma_{j_2}^{2m_2} \dots \sigma_{j_k}^{2m_k} \in B_n$ be a  pure braid for $n \ge 2$ 
with the same assumption as in Theorem \ref{thm_application}. 
Let $\overline{b}$ be a $(2n+1)$-braid with the same braid word as $b$. 
We take a braid 
$$ \eta_{(j)} =\eta_{(b, j)}: = \overline{b} \sigma_{n}^{2 j}= 
 (\sigma_{j_1}^{2 m_1}  \sigma_{j_2}^{2 m_2}   \cdots \sigma_{j_k}^{2 m_k}) \sigma_{n}^{2 j} \in B_{2n+1}$$ 
 for a non-negative integer $j$. 
 Then $ \eta_{(j)}$ is increasing in the middle with the intersection number $u(\eta_{(j)}, n+1)= j$, and 
 the braid 
$$  \eta_{(j)} \cdot \mathrm{skew}(\eta_{(j)}) = 
(\sigma_{j_1}^{2 m_1}  \cdots \sigma_{j_k}^{2 m_{k}}) \sigma_{n}^{2 j} \cdot 
\sigma_{n+1}^{2 j}  (\sigma_{2n+1 - j_k}^{2 m_k}  \cdots \sigma_{2n+1 - j_1}^{2 m_1}) \in B_{2n+1}$$ 
 is pseudo-Anosov.  
\end{lem}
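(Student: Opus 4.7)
The plan is to prove the two assertions of Lemma \ref{lem_pseudo-Anosov} in succession. The first --- that $\eta_{(j)}$ is increasing in the middle with $u(\eta_{(j)},n+1)=j$ --- is a direct geometric observation. The second --- that $\eta_{(j)}\cdot\mathrm{skew}(\eta_{(j)})$ is pseudo-Anosov --- follows from Lemma \ref{lem_torus-link} once its hypotheses are checked. I will assume $j\geq 1$ throughout, since $j=0$ leaves the middle strand isolated and is not needed in the intended applications.

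For the first assertion, the key observation is that $b$ is a word in $\sigma_1,\dots,\sigma_{n-1}$, so its image $\overline{b}\in B_{2n+1}$ leaves every strand at positions $n+1,n+2,\dots,2n+1$ pointwise fixed. In particular the $(n+1)$-st strand of $\overline{b}$ is straight and disjoint from all other strands. In the product $\eta_{(j)}=\overline{b}\,\sigma_n^{2j}$ this strand is then wrapped $j$ times around strand $n$ by the factor $\sigma_n^{2j}$, and its closure $\mathrm{cl}(\eta_{(j)}(n+1))$ is an unknot. I would take the associated disk $D=D_{(\eta_{(j)},n+1)}$ to be the natural ``flat'' disk bounded by the longitude of $\mathcal{N}(\mathrm{cl}(\eta_{(j)}(n+1)))$: in the $\overline{b}$ region it meets no other strand (the other strands lie away from the middle and the middle strand is straight), while in the $\sigma_n^{2j}$ region it is pierced transversally in exactly $j$ points by strand $n$, each carrying sign $+1$ by the positive convention for $\sigma_n$. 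This verifies conditions (D1) and (D2) and gives $u(\eta_{(j)},n+1)=j$.

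For the second assertion, I would first compute $\mathrm{skew}(\eta_{(j)})$ from the formula $\mathrm{skew}(\sigma_i^{\varepsilon})=\sigma_{2n+1-i}^{\varepsilon}$, obtaining $\mathrm{skew}(\eta_{(j)})=\sigma_{n+1}^{2j}\sigma_{2n+1-j_k}^{2m_k}\cdots\sigma_{2n+1-j_1}^{2m_1}$, so the product $\eta_{(j)}\cdot\mathrm{skew}(\eta_{(j)})$ is exactly the word displayed in the lemma. Then I would check each hypothesis of Lemma \ref{lem_torus-link} for this product in $B_{2n+1}$: it is pure (every exponent is even); the strand count is $2n+1\geq 5\geq 4$ since $n\geq 2$; it is homogeneous, because each $\sigma_i$ with $i\leq n-1$ occurs only in $\overline{b}$ and inherits its consistent sign from the homogeneity of $b$, each $\sigma_i$ with $i\geq n+2$ occurs only in $\mathrm{skew}(\overline{b})$ with the sign of the corresponding $\sigma_{2n+1-i}$, and the letters $\sigma_n$ and $\sigma_{n+1}$ appear only with positive exponent in $\sigma_n^{2j}\sigma_{n+1}^{2j}$; and every $\sigma_1,\dots,\sigma_{2n}$ appears, since $\{1,\dots,n-1\}$ is covered by $\overline{b}$ (hypothesis on $b$), $\{n,n+1\}$ by the inserted $\sigma_n^{2j}\sigma_{n+1}^{2j}$ (using $j\geq 1$), and $\{n+2,\dots,2n\}$ by $\mathrm{skew}(\overline{b})$ under the bijection $i\mapsto 2n+1-i$. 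Invoking Lemma \ref{lem_torus-link} yields the pseudo-Anosov conclusion.

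No step is conceptually deep: the content sits entirely in the two inputs, the geometric picture of the middle strand in Step 1 and the criterion of Lemma \ref{lem_torus-link} in Step 2. The main care needed is in Step 1, where one must specify the associated disk $D$ precisely enough to confirm that its boundary is a longitude (not a curve of nontrivial slope on $\partial\mathcal{N}(\mathrm{cl}(\eta_{(j)}(n+1)))$) and that the $j$ transverse intersections with strand $n$ all carry the same positive sign; the rest is bookkeeping about which generators of $B_{2n+1}$ appear in the product and with what signs under the index reversal $i\mapsto 2n+1-i$.
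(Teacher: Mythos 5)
Your proof is correct and follows essentially the same route as the paper's: verify directly from the geometry of the middle strand that $\eta_{(j)}$ is increasing in the middle with $u(\eta_{(j)},n+1)=j$, then check that $\eta_{(j)}\cdot\mathrm{skew}(\eta_{(j)})$ satisfies all hypotheses of Lemma \ref{lem_torus-link}. Your restriction to $j\ge 1$ is also the right reading of the statement, since for $j=0$ the middle strand is isolated (condition (D1) fails and the product is reducible), and the applications only invoke the lemma for $j=1,\dots,n$.
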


\begin{proof}
By the definition of $ \eta_{(j)}$, 
it is easy to check that 
$ \eta_{(j)}$ is increasing in the middle with $u(\eta_{(j)}, n+1)= j$. 
The braid $ \eta_{(j)} \cdot \mathrm{skew}(\eta_{(j)}) \in B_{2n+1}$ satisfies the assumption of Lemma \ref{lem_torus-link}, 
and hence it is pseudo-Anosov. 
\end{proof}

\begin{ex}
If  $b= \sigma_1^4 \in B_2$, then 
$\eta_{(1)}= \eta_{(b,1)} = \overline{b} \sigma_2^2 =  \sigma_1^4 \sigma_2^2 \in B_5$. 
By Lemma \ref{lem_pseudo-Anosov}, 
$\eta_{(1)} \cdot \mathrm{skew}(\eta_{(1)})= \sigma_1^4 \sigma_2^2 \sigma_3^2 \sigma_4^4 \in B_5$ 
is pseudo-Anosov. 
See Figure \ref{fig_kobayashi}. 
\end{ex}

Let us turn to the proof of Theorem \ref{thm_application}. 

\begin{proof}[Proof of Theorem \ref{thm_application}]
We consider the braid $\eta_{(j)} = \eta_{(b, j)} \in B_{2n+1}$ as in Lemma \ref{lem_pseudo-Anosov} 
for each $j= 1, \dots, n$. 
By Lemma \ref{lem_involution-skew}, 
$\mathrm{skew}(\eta_{(j)}) \in B_{2n+1}$ is a braid increasing in  the middle, and 
$u(\eta_{(j)}, n+1)= u(\mathrm{skew}(\eta_{(j)}), n+1) = j$. 
Note that 
$$\eta_{(j)}^{\bullet} =   \sigma_{j_1}^{2m_1} \sigma_{j_2}^{2m_2} \dots \sigma_{j_k}^{2m_k} \in B_{2n}$$ 
with the same word as the braid $b \in B_n$ 
for $j= 1, \dots, n$. 
Then we have 
$$C(\Bigl(\mathrm{skew}(\eta_{(j)})\Bigr)^{\bullet})= C(\eta_{(j)}^{\bullet})= \mathrm{cl}(b)$$
as links in $S^3$.  
By Lemma \ref{lem_pseudo-Anosov}, 
$\eta_{(j)} \cdot \mathrm{skew}(\eta_{(j)}) \in B_ {2n+1}$ is pseudo-Anosov for $j= 1, \dots, n$. 
Notice that  $\eta_{(j)} \cdot \mathrm{skew}(\eta_{(j)})$ is the skew-palindromization of $\mathrm{skew}(\eta_{(j)})$ 
(since $\mathrm{skew}: B_n \rightarrow B_n$ is an involution).  
Hence the braids $\mathrm{skew}(\eta_{(1)}),  \dots, \mathrm{skew}(\eta_{(n)}) \in B_{2n+1}$ 
satisfy the conditions (1)--(3) of Proposition \ref{prop_finite}, 
where $L =  C(\Bigl(\mathrm{skew}(\eta_{(j)})\Bigr)^{\bullet}) = \mathrm{cl}(b)$. 
Thus $\ell_g(M_{\mathrm{cl}(b)}) \asymp \dfrac{1}{g}$.  
This completes the proof. 
\end{proof}

Theorem \ref{thm_infinite} follows from the following result.

\begin{cor}
\label{corollary_lens}
 For the lens space $L_{(2m,1)}$ of type $(2m,1)$ with $m \ne 0$, 
we have $\ell_g(L_{(2m,1)}) \asymp \dfrac{1}{g}$. 
\end{cor}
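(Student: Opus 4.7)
The plan is to realize $L_{(2m,1)}$ as the $2$-fold branched cover $M_{\mathrm{cl}(b)}$ for an explicit braid $b$ satisfying the hypotheses of Theorem~\ref{thm_application}, and then quote that theorem. The natural candidate is the $2$-braid
\[ b = \sigma_1^{2m} \in B_2. \]
Its closure is the $(2,2m)$-torus link $T(2,2m)$ (a $2$-component link, since $2m$ is even), and it is classical that the $2$-fold branched cover of $S^3$ branched over $T(2,2m)$ is the lens space $L_{(2m,1)}$. This identification is most transparent from the description of $T(2,2m)$ as sitting on a Heegaard torus of $S^3$, which makes the branched cover a Seifert fibration over $S^2$ whose invariants read off as $L_{(2m,1)}$.

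It remains to verify that $b = \sigma_1^{2m}$ satisfies the hypotheses of Theorem~\ref{thm_application} with $n=2$. Writing $b = \sigma_{j_1}^{2m_1}$ with $j_1 = 1$, $m_1 = m \ne 0$, the form is as required, and the single index $j_1 = 1$ exhausts $\{1, \dots, n-1\} = \{1\}$, so every generator $\sigma_i$ for $i \in \{1,\dots,n-1\}$ appears. The one-letter word is tautologically homogeneous. Finally, $\pi(b) = \pi(\sigma_1)^{2m}$ is the $2m$-th power of the transposition $(1\,2)$, hence the identity, so $b$ is pure. In fact $b$ falls under the ``in particular'' clause of Theorem~\ref{thm_application} (the statement about $b = \sigma_1^{2m_1}\sigma_2^{2m_2}\cdots\sigma_{n-1}^{2m_{n-1}}$), trivially for $n = 2$. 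Invoking Theorem~\ref{thm_application} therefore yields
\[ \ell_g(L_{(2m,1)}) \;=\; \ell_g(M_{\mathrm{cl}(b)}) \;\asymp\; \frac{1}{g}. \]

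The only genuine input beyond a routine verification of hypotheses is the classical identification of the $2$-fold branched cover of $T(2,2m)$ with $L_{(2m,1)}$; no new analysis is required, and this is the step where I would cite a standard $3$-manifold reference rather than reproduce the Seifert-fiber computation. Once that identification is in hand, the corollary is an immediate specialization of Theorem~\ref{thm_application} to $n=2$, $b = \sigma_1^{2m}$.
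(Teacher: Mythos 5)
Your argument is exactly the paper's: take $b=\sigma_1^{2m}\in B_2$, identify $M_{\mathrm{cl}(b)}$ with $L_{(2m,1)}$ via the classical fact that the $2$-fold branched cover of $S^3$ over the $(2m,2)$-torus link is that lens space (the paper cites Rolfsen), and apply Theorem~\ref{thm_application} with $n=2$. Your verification of the hypotheses is correct and the proposal matches the paper's proof.
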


\begin{proof}
The closure $\mathrm{cl}(\sigma_1^{2m} )$  of the $2$-braid $ \sigma_1^{2m} $ with $m \ne 0$ 
is the $(2m,2)$-torus link $T$. 
(See Figure \ref{fig_closure}(1).) 
The $2$-fold  branched cover $M_{T} = M_{\mathrm{cl}(\sigma_1^{2m} )}$ 
of $S^3$ branched over the $(2m,2)$-torus link $T$ 
is the lens space $L_{(2m,1)}$ of the type $(2m,1)$. 
See  \cite[p. 302]{Rolfsen90}  for example. 
This together with Theorem \ref{thm_application} completes the proof. 
\end{proof}

\begin{thm}
\label{thm_trivial-link}
Let $\sharp_{n}S^2 \times S^1$ denote the  connected sum of $n$ copies of $S^2 \times S^1$. 
For each $n \ge 1$, we have 
$\ell_g(\sharp_n S^2 \times S^1) \asymp \dfrac{1}{g}$. 
\end{thm}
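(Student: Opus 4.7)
The plan is to apply Proposition \ref{prop_finite} to the link $L = E_{n+1}$, the $(n+1)$-component unlink, whose $2$-fold branched cover of $S^3$ is well known to be $\sharp_n(S^2\times S^1)$. Theorem \ref{thm_application} is not directly applicable here: by Stallings' theorem the closure of any nontrivial homogeneous braid is fibered, while $E_{n+1}$ is non-fibered for $n+1 \ge 2$. However, by Figure \ref{fig:circular-plat}(2) we have $E_{n+1} = C(e_{2n+2})$, and iterating (\ref{equation_halftwist}) gives $C(\Delta_{2n+2}^{2k}) = E_{n+1}$ for every $k$. Hence Proposition \ref{prop_finite} with parameter $n':= n+1 \ge 2$ is the right vehicle, requiring braids $\beta_{(j)} \in B_{2n+3}$ with $\beta_{(j)}^{\bullet} \in B_{2n+2}$.

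For each $j \in \{1,\ldots,n+1\}$ I would construct $\beta_{(j)}$ increasing in the middle such that $C(\beta_{(j)}^{\bullet}) = E_{n+1}$, $u(\beta_{(j)}, n+2) \equiv j \pmod{n+1}$, and $\widetilde{\beta_{(j)}}$ is pseudo-Anosov. The basic building blocks are the full twist $\Delta_{2n+3}^{2k}$, whose $\bullet$-image is $\Delta_{2n+2}^{2k}$ (since removing any single strand from a full twist leaves a full twist on the remaining strands) and which contributes $(2n+2)k$ to the middle-strand intersection number, together with the local generators $\sigma_{n+1}^{2}$ and $\sigma_{n+2}^{2}$, each of which is increasing in the middle with intersection number $1$ and trivial $\bullet$-image. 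Applying Lemma \ref{lem_product} and Examples \ref{ex_product}, \ref{ex_halftwist}, a natural first candidate is $\beta_{(j)} = \Delta_{2n+3}^{2k}\,\sigma_{n+1}^{2 a_j}\sigma_{n+2}^{2 b_j}$ with $a_j + b_j \equiv j \pmod{n+1}$: its $\bullet$-image is $\Delta_{2n+2}^{2k}$ (so $C(\beta_{(j)}^{\bullet}) = E_{n+1}$) and $u(\beta_{(j)},n+2) = (2n+2)k + a_j + b_j \equiv j \pmod{n+1}$.

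The main obstacle is verifying the pseudo-Anosov condition on $\widetilde{\beta_{(j)}}$. A direct computation using $\mathrm{skew}(\Delta) = \Delta$ gives $\widetilde{\beta_{(j)}} = \sigma_{n+1}^{2 b_j}\sigma_{n+2}^{2 a_j}\Delta_{2n+3}^{4k}\sigma_{n+1}^{2 a_j}\sigma_{n+2}^{2 b_j}$, whose closure has linking number $2k$ on every pair except $\mathrm{lk}(n+1, n+2) = \mathrm{lk}(n+2, n+3) = 2k + a_j + b_j$. Lemma \ref{lem_criterion}'s hypothesis is then inconclusive on the subset $\{n+1, n+3\}$, since these two strands have identical linking with every other strand. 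I would resolve this by enriching $\beta_{(j)}$ with additional factors of the form $\sigma_i^{\pm 2}$ (for suitable $i$), carefully chosen so that $C(\beta_{(j)}^{\bullet})$ remains $E_{n+1}$ using the freedom in (\ref{equation_halftwist}) and the fact that inserting a twist between two strands lying in the same plat pair does not change the circular plat closure, while simultaneously breaking the linking symmetry between the strands $n+1$ and $n+3$ so that Lemma \ref{lem_criterion} applies. This is technical but analogous in spirit to Lemma \ref{lem_pseudo-Anosov}. Once the family $\{\beta_{(j)}\}_{j=1}^{n+1}$ is in hand, Proposition \ref{prop_finite} immediately yields $\ell_g(\sharp_n(S^2\times S^1)) = \ell_g(M_{E_{n+1}}) \asymp \dfrac{1}{g}$, completing the proof.
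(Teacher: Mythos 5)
Your reduction to Proposition \ref{prop_finite} applied to an unlink presented as a circular plat closure is exactly the paper's skeleton (the paper works with $E_n=C(e_{2n})$ and $M_{E_n}=\sharp_{n-1}S^2\times S^1$, an immaterial index shift), and your observation that Theorem \ref{thm_application} cannot reach a split link is correct. The gap is precisely in the step you leave open: producing the braids $\beta_{(j)}$ and certifying that $\widetilde{\beta_{(j)}}$ is pseudo-Anosov. Your candidate $\beta_{(j)}=\Delta_{2n+3}^{2k}\sigma_{n+1}^{2a_j}\sigma_{n+2}^{2b_j}$ is not merely undecided by Lemma \ref{lem_criterion} --- it is genuinely reducible: since $\Delta^{4k}=(\Delta^2)^{2k}$ lies in $\ker\Gamma$, the mapping class $\Gamma(\widetilde{\beta_{(j)}})=\Gamma\bigl(\sigma_{n+1}^{2b_j}\sigma_{n+2}^{2a_j}\sigma_{n+1}^{2a_j}\sigma_{n+2}^{2b_j}\bigr)$ is supported on a disk containing only the three middle punctures, so for $n\ge 1$ it is reducible (or trivial). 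The repair you sketch does not escape this. Insertions that preserve $C(\beta_{(j)}^{\bullet})=E_{n+1}$ are essentially twists on the middle strand, full twists, and twists absorbable by the plat caps (and note that a twist between two strands of a plat pair changes $C(\cdot)$ unless it sits at an end of the word where the cap absorbs it); modulo the central full twist, any such braid is supported on a disjoint union of small disks around the plat pairs and the middle strands, hence still reducible. Consistently with this, Lemma \ref{lem_criterion} keeps failing: the $R$-symmetry of every skew-palindromic braid forces $\mathrm{lk}(\ell_{n+2},\ell_{n+1})=\mathrm{lk}(\ell_{n+2},\ell_{n+3})$, and a whole plat pair such as $\mathcal{I}=\{1,2\}$ has identical linking (the uniform value $2k$ from the full twist) with every outside strand. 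The tension is structural: the linking-number criterion needs rich asymmetric linking among all strands, while plat-triviality of $\beta_{(j)}^{\bullet}$ forbids exactly that.

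The paper resolves this with a different pseudo-Anosov certificate. It takes $\beta^{\bullet}=e_{2n}$ on the nose and lets the \emph{middle strand} carry all the braiding, e.g.\ $\beta=\sigma_{n+1}\sigma_{n+2}\cdots\sigma_{2n-1}\sigma_{2n}^{4}\sigma_{2n-1}^{3}\cdots\sigma_{n+1}^{3}\in B_{2n+1}$, with $u(\beta,n+1)=2n$. Then $\widetilde{\beta}$ is the point-push of the middle puncture along the closed curve $\gamma_{\widetilde{\beta}}$ traced by that strand in the $2n$-punctured disk $D_{2n}$, and Kra's theorem gives pseudo-Anosovity once $\gamma_{\widetilde{\beta}}$ fills $D_{2n}$; the intersection number is adjusted modulo $n$ by passing to $\beta_{(j)}=\beta\sigma_{n+1}^{2j}$, which keeps $\beta_{(j)}^{\bullet}=e_{2n}$ and keeps the curve filling. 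To complete your argument you would need to abandon Lemma \ref{lem_criterion} here and use Kra's criterion (or another tool that sees the isotopy class of the middle strand rather than homological linking data).
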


\begin{center}
\begin{figure}[t]
\includegraphics[height=5.2cm]{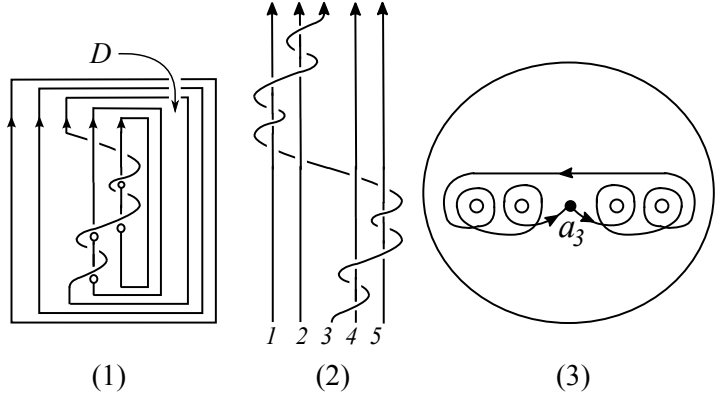}
\caption{(1) The closure $\mathrm{cl}(\beta)$ for 
$\beta= \sigma_3 \sigma_4^4 \sigma_3^3 \in B_5$ increasing in the middle 
with $u(\beta,3)=4$. ($D$ is an associated disk of the pair $(\beta, 3)$.)  
(2) The skew-palindromization $\widetilde{\beta}$ of $\beta$.  
(3) The closed curve $\gamma_{\widetilde{\beta}} $ (based at $a_3$) 
in $D_4= D^2 \setminus \{a_1, a_2, a_4, a_5\}$.}
\label{fig_Kra}
\end{figure}
\end{center}

\begin{proof}
For the $n$-component trivial link $E_n$, we have 
$M_{E_n} = \sharp_{n-1}S^2 \times S^1$. 
See  \cite[p. 300]{Rolfsen90}  for example. 
Recall that $E_n = C(e_{2n})$ for the identity element $e_{2n} \in B_{2n}$. 
To prove Theorem \ref{thm_trivial-link}, 
we check that 
$\ell_g(M_{C(e_{2n})})) \asymp \tfrac{1}{g}$.

For $n \ge 2$, we take a braid $\beta \in B_{2n+1}$ increasing in the middle 
with the following properties. 
 $$u(\beta, n+1) =  2n \equiv 0 \pmod n \ \mbox{and}\   \beta^{\bullet} =e_{2n}  \in B_{2n}.$$ 
 One can choose such a braid $\beta$ as follows. 
$$\beta = 
\sigma_{n+1} \sigma_{n+2} \cdots \sigma_{2n-1} \sigma_{2n}^4 \sigma_{2n-1}^3 \cdots \sigma_{n+2}^3 \sigma_{n+1}^3 \in B_{2n+1}.$$ 
See Figure \ref{fig_Kra}(1) for the braid 
$\beta= \sigma_3 \sigma_4^4 \sigma_3^3 \in B_5$ when $n=2$. 
Consider the skew-palindromization $\widetilde{\beta}$ of $\beta$, see Figure \ref{fig_Kra}(2). 
Since $\beta^{\bullet} =e_{2n}  \in B_{2n}$, 
it holds 
$\widetilde{\beta}^{\bullet} = e_{2n} \cdot e_{2n}= e_ {2n} \in B_{2n}$.

Let $a_1, \dots, a_{2n+1}$ be base points of $\widetilde{\beta}$. 
The projection of the $(n+1)$-th strand $\widetilde{\beta}(n+1) \subset D^2 \times [0,1]$ 
onto the first factor $D^2$ 
gives an oriented closed curve $\gamma_{\widetilde{\beta}}$ on the $2n$-punctured disk 
$D_{2n} = D^2 \setminus \{a_1, \dots, a_n, a_{n+2}, \dots, a_{2n+1}\}$,  
see Figure \ref{fig_Kra}(2)(3).  
Note that the initial point of the closed curve 
$\gamma_{\widetilde{\beta}}$ corresponds to the base point $a_{n+1}$ of $\widetilde{\beta}$. 
If we choose a braid $\beta$ increasing in the middle as above, 
then one can check that 
$\gamma_{\widetilde{\beta}}$  fills $D_ {2n}$. 
Here a closed curve $\gamma \subset D_N$ in an $N$-punctured disk $D_N$ 
 {\em fills} $D_N$ 
 if every loop that is freely homotopic to $\gamma$ intersects every essential simple closed curve in $D_N$.
By Kra's criterion \cite[Theorem 2']{Kra81}, 
$\widetilde{\beta} \in B_{2n+1}$ is pseudo-Anosov.

For each $j= 1, \dots, n$, we define a braid $\beta_{(j)} \in B_{2n+1}$ as follows: 
If $j= n$, then 
$\beta_{(n)}:= \beta$. 
If $j = 1, \dots, n-1$, then $\beta_{(j)}:= \beta \sigma_{n+1}^{2j}$. 
Notice that both braids 
$\beta \in B_{2n+1}$ and $\sigma_{n+1}^{2j} \in B_{2n+1}$ are increasing in the middles 
with the intersection numbers $2n$ and $j$ respectively. 
Lemma \ref{lem_product} tells us that 
$\beta_{(j)} = \beta \sigma_{n+1}^{2j}$ is  increasing in the middle with the intersection number 
$u(\beta_{(j)}, n+1) = 2n+ j \equiv j$ $\pmod n$. 
Moreover 
$\beta_{(j)}^{\bullet}= \beta^{\bullet} (\sigma_{n+1}^{2j})^{\bullet}= e_{2n} \cdot e_{2n}= e_{2n} \in B_{2n}$. 
Hence $E_n= C(\beta_{(j)}^{\bullet})$. 
The closed curve $\gamma_{\widetilde{\beta_{(j)}}} $ still fills $D_{2n}$. 
Hence $\widetilde{\beta_{(j)}} \in B_{2n+1}$ for $j= 1, \dots, n$ is pseudo-Anosov 
by the same criterion of Kra. 
By Proposition \ref{prop_finite},  we obtain 
$\ell_g(M_{E_n}) \asymp \frac{1}{g}$. 
This completes the proof.
\end{proof}

Finally we  prove Theorem \ref{thm_infinite-hyperbolic}. 

\begin{center}
\begin{figure}[t]
\includegraphics[height=8cm]{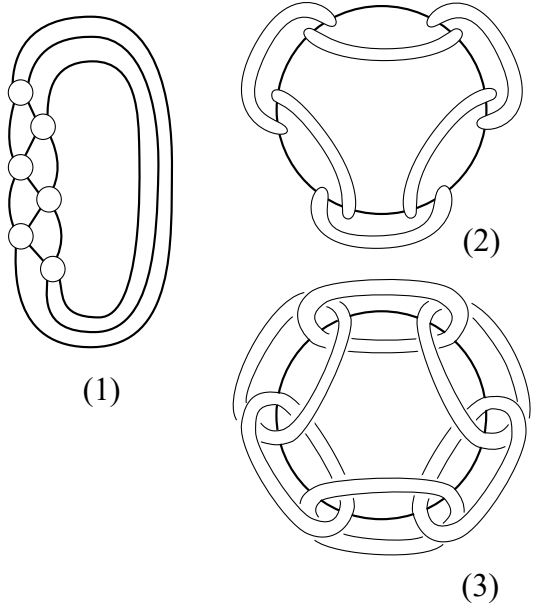}
\caption{This figure illustrates the case where $k=3$. 
(1) Let $P$ be the complement of the $2k$ $3$-balls containing twists. 
(2) We isotope (1) in order to have $2 \pi /3$ symmetry.  
(3) The $2$-fold branched cover of $P$ branched over  
$P \cap \mathrm{cl}(b_{\bm m})$ (indicated by the thick circle) 
is homeomorphic to $S^3 - \mathcal{N}(\mathcal{C}_6)$. }
\label{fig_6-chain}
\end{figure}
\end{center}

\begin{proof}[Proof of Theorem \ref{thm_infinite-hyperbolic}]
Let $b_{\bm m} \in B_3$ be a pure braid of the form 
$$
b_{\bm m} = \sigma_1^{2 m_1} \sigma_2^{2 m_2} \sigma_1^{2 m_3} \sigma_2^{2 m_4} \ldots 
\sigma_1^{2 m_{2k-1}}\sigma_2^{ 2 m_{2k}}, 
$$
where 
$k \geq 3$ and 
${\bm m} = (m_1, m_2, m_3, m_4, \ldots, m_{2k}) \in (\mathbb{Z}_{>0})^{2k}$.  
Notice that $b_{\bm m}$ is homogeneous, and 
both $\sigma_1$ and $\sigma_2$ appear in $b_{\bm m}$. 
As shown in Figure \ref{fig_6-chain}, $M_{\mathrm{cl}(b_{\bm m})}$ is a closed $3$-manifold obtained from 
the $3$-sphere $S^3$ by Dehn surgery about the minimally twisted $2k$-chain link $\mathcal{C}_{2k}$. 
Let $s_i$ $(i= 1, \dots, 2k)$ be the slope of this Dehn surgery. 
It is shown by Thurston \cite[Example 6.8.7]{thurston:notes} that 
$S^3 - \mathcal{C}_{2k}$ 
has a complete hyperbolic structure with $2k$ cusps. 
(See also \cite{Purcell,Yoshida96}.)
Hence, we have $\vol(S^3-\mathcal{C}_{2k}) > 2k v_3$, 
where $v_3= 1.01494 \dots $ 
is the volume of the regular hyperbolic tetrahedron. 
See \cite[Theorem 7]{Adams}. 
We consider an Euclidean structure on the torus boundary of 
$\mathcal{N}(\mathcal{C}_{2k})$ 
induced by a maximal disjoint horoball neighborhood about the cusps. 
Let $\lambda$ be the minimum of the Euclidean lengths of the solpes $s_i$. 
By the $2 \pi$-theorem of Gromov-Thurston \cite[Theorem 9]{BleilerHodgson96}, 
if $\lambda > 2 \pi$, then $M_{\mathrm{cl}(b_{\bm m})}$ is hyperbolic. 
Furthermore, 
from the result by Futer-Kalfagianni-Purcell \cite[Theorem 1.1]{fkp:filling}, 
we have 
$$
 \left(1-\left(\frac{2\pi}{\lambda}\right)^2\right)^{3/2} 
 \vol(S^3 - \mathcal{C}_{2k})  \leq \vol(M_{\mathrm{cl}(b_{\bm m})})  
 <  \vol(S^3 - \mathcal{C}_{2k} ).
$$  
For any $R >0$, if we choose $k$ so that $2k v_3 > R$ 
and each coordinate of ${\bm m}$ sufficiently large, 
then we have $\vol(M_{\mathrm{cl}(b_{\bm m})})>R$. 
Because the braid $b_{\bm m}$ satisfies the assumption in Theorem \ref{thm_application}, 
we see  
$\ell_g(M_{\mathrm{cl}(b_{\bm m})})  \asymp \dfrac{1}{g}$. 
\end{proof}

%
%

\bibliographystyle{alpha} 
\bibliography{skew}

\end{document}